\DeclarePairedDelimiter{\abs}{\lvert}{\rvert}
\DeclarePairedDelimiter{\norm}{\lVert}{\rVert}
\DeclarePairedDelimiter{\bra}{(}{)}
\DeclarePairedDelimiter{\pra}{[}{]}
\DeclarePairedDelimiter{\set}{\{}{\}}
\DeclarePairedDelimiter{\skp}{\langle}{\rangle}
\newcommand{\Z}{\mathbb{Z}}
\newcommand{\R}{\mathbb{R}}
\newcommand{\Rp}{\mathbb{R}_{>0}}
\newcommand{\Rnn}{\mathbb{R}_{\ge0}}
\newcommand{\crc}{{{\mathbb S}^1}}
\newcommand{\Leb}{{\mathcal L}}
\newcommand{\interval}{{\mathcal I}}
\newcommand{\dst}{{\mathbb{D}}}
\newcommand{\ddst}{\mathbb{D}^\delta}
\newcommand{\Prob}{\mathcal{P}}
\newcommand{\radon}{{\mathfrak M}}
\newcommand{\dprb}{\mathcal{P}^\delta}
\newcommand{\dn}{\mathrm{d}}
\newcommand{\dd}{\,\mathrm{d}}
\DeclareMathAlphabet{\mathup}{OT1}{\familydefault}{m}{n}
\newcommand{\dx}[1]{\mathop{}\!\mathup{d} #1} %
\newcommand{\fnc}{\mathcal{E}}
\newcommand{\dfnc}{\mathcal{E}^\delta}
\newcommand{\ent}{\mathcal{H}}
\newcommand{\fish}{\mathcal{F}}
\newcommand{\lyp}{\mathcal{L}}
\newcommand{\fnclog}{\mathcal{L}}
\newcommand{\dent}{\mathcal{H}^\delta}
\newcommand{\dfish}{\mathcal{F}^\delta}
\newcommand{\dlyp}{\mathcal{L}^\delta}
\newcommand{\eps}{\varepsilon}
\newcommand{\sgrp}{{\mathfrak S}}
\newcommand{\frc}[2]{\pra*{\frac{#1^2}{#2}}}
\newcommand{\AC}{\textup{AC}}
\newcommand{\loc}{{\textup{loc}}}
\newcommand{\mob}{\mathbf{M}}
\newcommand{\mobb}{\mathbf{m}}
\newcommand{\logmean}{\boldsymbol{\Lambda}}
\newcommand{\geom}{\boldsymbol{\Gamma}}
\newcommand{\ddff}{\Delta^{\!\delta}}
\newcommand{\hell}{\mathbb{H}}
\newcommand{\dhell}{\mathbb{H}^\delta}
\newcommand{\wass}{\mathbb{W}}
\newcommand{\dwass}{\mathbb{W}^\delta}
\newcommand{\welo}{\mathfrak{w}}
\newcommand{\curves}{{\mathcal C}}
\newcommand{\cto}{[N]}
\newcommand{\m}{\mathbf{M}}
\newcommand{\moment}{{\mathfrak m}}
\newcommand{\act}{\mathfrak{a}}
\newcommand{\action}{\mathfrak{A}}
\newcommand{\daction}{\mathfrak{A}^\delta}
\newcommand{\connect}{\mathcal{CE}}
\newcommand{\Ifnc}{{\mathcal I}}
\newcommand{\Jfnc}{{\mathcal J}}
\newcommand{\ddd}{{\mathrm D}}
\newcommand{\mf}{{\mathcal M}}
\newcommand{\tg}{\mathrm{T}}
\newcommand{\grd}{\operatorname{grad}}
\newcommand{\ons}{{\mathbb K}}
\newcommand{\dprbplus}{\mathcal{P}^\delta_{>0}}
\newcommand{\pid}{\Pi^{\smash\delta}}
\newcommand{\rhon}{\rho^{\delta}}
\newcommand{\dnorm}[2]{\left\| #2 \right\|_{#1}}
\newcommand{\tnorm}[1]{{\left\vert\kern-0.25ex\left\vert\kern-0.25ex\left\vert #1 
    \right\vert\kern-0.25ex\right\vert\kern-0.25ex\right\vert}}
\newenvironment{manualtheorem}[1]{%
	\manualtheoreminner
}{\endmanualtheoreminner}
\newenvironment{manualresult}[1]{%
	\manualresultinner
}{\endmanualresultinner}    
\newtheorem{theorem}{Theorem}
\newtheorem{proposition}[theorem]{Proposition}
\newtheorem{lemma}[theorem]{Lemma}
\newtheorem{definition}[theorem]{Definition}
\newtheorem{remark}[theorem]{Remark}
\date{\today}
\title[A structure preserving discretization for the DLSS equation]{A structure preserving discretization for the Derrida-Lebowitz-Speer-Spohn equation \\ based on diffusive transport}
\author{Daniel Matthes}
\address{Daniel Matthes -- Zentrum Mathematik, TU München, Boltzmannstrasse 3, D-85748 Garching, Germany}
\email{matthes@ma.tum.de}
\author{Eva-Maria Rott}
\address {Eva-Maria Rott -- Zentrum Mathematik, TU München, Boltzmannstrasse 3, D-85748 Garching, Germany}
\email{eva-maria.rott@tum.de}
\author{Giuseppe Savaré}
\address{Giuseppe Savar\'e -- Bocconi University,
	Department of Decision Sciences and BIDSA, Via Roentgen 1, 20136 Milano (Italy)}
\email{giuseppe.savare@unibocconi.it}
\author{André Schlichting}
\address{André Schlichting -- Institute for Applied Analysis, Ulm University, Helmholtzstrasse 18, D-89081 Ulm, Germany}
\email{andre.schlichting@uni-ulm.de}
\newcommand{\change}[1]{{\color{blue}#1}}
\renewcommand{\change}[1]{#1}
\begin{document}

\begin{abstract}
	We propose a spatial discretization of the fourth-order nonlinear DLSS equation on the circle. Our choice of discretization is motivated by a novel gradient flow formulation with respect to a metric that generalizes martingale transport. The discrete dynamics inherits this gradient flow structure, and in addition further properties, such as an alternative gradient flow formulation in the Wasserstein distance, contractivity in the Hellinger distance, and monotonicity of several Lypunov functionals. Our main result is the convergence in the limit of vanishing mesh size. The proof relies an a discrete version of a nonlinear functional inequality between integral expressions involving second order derivatives. 
\end{abstract}

\thanks{\emph{Acknowledgement:} 
The authors would like to thank the anonymous referees for their very detailed and constructive reports, which helped to improve the quality of the manuscript.	
AS thanks Jasper Hoeksema pointing out the generalized gradient structure in Remark~\ref{rem:GGF}.\\
\emph{Funding:}  DM's and ER's research is supported by the DFG Collaborative Research Center TRR 109, ``Discretization in Geometry and Dynamics.''
GS gratefully acknowledge the support of the Institute for Advanced Study of the Technical University of Munich, funded by the German Excellence Initiative.
GS has been supported by the MIUR-PRIN 202244A7YL project ``Gradient Flows and Non-Smooth Geometric Structures with Applications to Optimization and Machine Learning'' and by the INdAM Project 2024.
AS is supported by the Deutsche Forschungsgemeinschaft (DFG, German Research Foundation) under Germany's Excellence Strategy EXC 2044 --390685587, Mathematics M\"unster: Dynamics--Geometry--Structure.}

\maketitle

\section{Introduction and main results}
In the article at hand, we devise and analyze a spatial discretization of the Derrida-Lebowitz-Speer-Spohn (DLSS) equation, also known as quantum drift diffusion (QDD) equation,
\begin{equation}\label{eq:DLSS}
    \partial_t \rho 
    = - \partial_{xx}\bigl( \rho\, \partial_{xx} \log \rho \bigr) 
\end{equation}
with periodic boundary conditions, i.e., on the circle $\crc\cong\R/\Z$. For the relevance of \eqref{eq:DLSS} in mathematical physics and known analytical results, see Section \ref{sct:history} below. This equation has a variety of remarkable structural properties --- among them several Lyapunov functionals, contractivity properties, and two different gradient flow structures, the second of which is described here for the first time; see Section \ref{sct:QDDprop} for details.

For discretization, we approximate $\rho$ by a piecewise constant function with $N$ values $\rho_\kappa$, $\kappa\in[N]:=\set*{1,\dots,N}$, with respect to a uniform spatial mesh with cells of size $\delta=1/N$ on ${\mathbb S}^1$. Our approximation of~\eqref{eq:DLSS} is given by
\begin{align}
    \label{eq:dDLSS0}
    \dot\rho_\kappa = \frac1{\delta^2}\big(F_{\kappa+1}-2F_\kappa+F_{\kappa-1}\big)
    \qquad\text{where}\qquad
    F_\lambda := -\frac2{\delta^2}\big(\sqrt{\rho_{\lambda+1}\rho_{\lambda-1}}-\rho_\lambda\big), \qquad\kappa,\lambda\in [N].
\end{align}
This is a finite-difference discretization of~\eqref{eq:DLSS}, as can be seen from the expansion
\begin{equation}\label{eq:heuristic:derivation}
	F_\lambda  
	= - 2\frac{\rho_\lambda}{\delta^2} \biggl( \exp\Bigl(\tfrac{1}{2} \bigl( \log \rho_{\lambda+1} + \log \rho_{\lambda-1} - 2 \log \rho_\lambda \bigr) \Bigr) -1 \biggr)
	\approx - \rho \, \partial_{xx} \log \rho  + O(\delta^2),
\end{equation}
which also suggests a second order approximation. 
Thanks to the specific choice of the nonlinearity in~\eqref{eq:dDLSS0}, the discrete solutions share surprisingly many properties with the original PDE~\eqref{eq:DLSS}: solutions to~\eqref{eq:dDLSS0} remain non-negative and of unit mass if the initial datum is so, and we are able to identify discrete counterparts of the two most significant Lyapunov functionals, of suitable contraction estimate, and of two gradient flow structures; see Section \ref{sct:QDDnum} below.

The main result of this paper concerns convergence (see Theorem \ref{thm:limit} for the precise statement):
\begin{manualresult}{A}\label{result:convergence}
	Let $\bar\rho\in \change{\Prob(\crc)}$ be an initial probability \change{measure} on $\crc$. For each $N=2,3,\ldots$, define a piecewise constant approximation $\bar\rho^N$ via $\bar\rho^N_\kappa := \frac1N+\frac{N-1}N\int_{(\kappa-1/2)/N}^{(\kappa+1/2)/N}\bar\rho(x)\dd x$, and let $(\rho^N(t))_{t\ge0}$ be the respective solution to \eqref{eq:dDLSS0} with initial datum $\bar\rho^N$. Then, as $N\to\infty$, the functions $\rho^N$ converge strongly \change{along a subsequence} in $L^2(0,T;L^2(\crc))$ to a weak solution $(\rho_*(t))_{t\ge0}$ of~\eqref{eq:DLSS} with initial datum $\bar\rho$.
\end{manualresult}
\subsection{DLSS equation --- origins and fundamental analytical results}
\label{sct:history}
In its one-dimensional version (actually, on the real half-line), equation \eqref{eq:DLSS} has first appeared in~\cite{DerridaLebowitzSpeerSpohn1991a,DerridaLebowitzSpeerSpohn1991b} as a macroscopic description of interface fluctuations in the anchored Toom model, see also \cite{BordenaveGermainTrodgon} for a more recent contribution on lower-order corrections. The multi-dimensional version of \eqref{eq:DLSS} is known from semi-conductor physics. More precisely, the combination with linear drift and diffusion,
\begin{align}
    \label{eq:DLSSd}
    \partial_t\rho = \Delta \rho + \nabla\cdot(\rho\,\nabla V) - \epsilon^2\nabla\cdot\left(\rho\,\nabla\frac{\Delta\sqrt\rho}{\sqrt\rho}\right),
\end{align}
where the small parameter $\epsilon$ is of the order of Planck's constant $\hbar$, appears as a quantum-corrected version \cite{Ancona} of the classical drift diffusion equation for charge carrier transport in an external or self-consistently coupled potential $V$; note that the fourth order term imposes an additional drift along the gradient of the self-induced Bohm potential. A systematic derivation of \eqref{eq:DLSSd} by the moment method from a quantum kinetic model has been performed in \cite{DegondMehatsRinghofer}. There, \eqref{eq:DLSSd} appears as local approximation of a sophisticated non-local drift-diffusion model, and both the full non-local model~\cite{Pinaud} as well as other higher order local approximations \cite{BukalJuengelMatthes,MatthesRott} have been analyzed. 

The rigorous analysis of \eqref{eq:DLSS} started with a proof of local-in-time well-posedness for positive classical solutions \cite{Bleher}. Later, existence of global-in-time non-negative weak solutions has been obtained in space dimensions $d=1$ \cite{JunPin-analysis}, $d\le3$ \cite{JuengelMatthes2008}, and finally for arbitrary $d\ge1$ \cite{GianazzaSavareToscani2009}. More recently, also uniqueness \change{in a class of suitable regular solutions} \cite{Fischer-uniqueness} and infinite speed of support growth \cite{Fischer-support} have been shown. The self-similar long-time asymptotics towards a Gaussian profile for solutions on $\R^d$ have been formally derived in \cite{CarrilloToscani,CarrilloTang} and were then rigorously proven in \cite{GianazzaSavareToscani2009}, see also \cite{MatthesMcCannSavare} for a generalization. For solutions on bounded domains, exponential convergence to the homogeneous steady state has been shown in \cite{JunPin-analysis,JuengelMatthes2008}.

\subsection{DLSS equation --- structural properties}
\label{sct:QDDprop}
To begin with, the DLSS equation \eqref{eq:DLSS} admits a global weak solution that is non-negative and mass preserving for any non-negative initial datum of finite entropy (see below) \cite{GianazzaSavareToscani2009,JuengelMatthes2008,JunPin-analysis}. By the scaling properties of \eqref{eq:DLSS}, there is no loss in restricting attention to solutions in the space of probability densities from now on.

Next, solutions to \eqref{eq:DLSS} dissipate a variety of Lyapunov functionals. The most essential ones for the structural considerations are the entropy $\ent$ and Fisher information $\fish$, given by
\begin{equation}\label{eq:def:ent-fish}
	\ent(\rho) := \begin{cases}
		\int_\crc \rho(x) \bra*{\log \rho(x)-1} \dd x , & \rho \ll\!  \dd x \\
		+\infty , & \text{else},
	\end{cases}
	\ \ \text{and}\ \ 
	\fish(\rho) := \begin{cases}
		2\int_\crc \abs*{ \partial_x \sqrt{\rho}}^2 \dd x , & \sqrt{\rho}\in H^1(\crc) \\
		+\infty , & \text{else} . 
	\end{cases}
\end{equation}
Actually, $\ent$ and $\fish$ are elements of families of time-monotone power-type functionals 
\begin{align}
    \label{eq:DLSSlyapunov}
    \Ifnc_\alpha(\rho) := \int_\crc\frac{\rho^\alpha}{\alpha(\alpha-1)}\dd x
    \qquad\text{and}\qquad
    \Jfnc_\beta(\rho) := \int_\crc \bigl|\partial_x\big(\rho^{\beta/2}\big)\bigr|^2\dd x,
\end{align}
with parameters $\alpha\in I^{\alpha}$, $\beta\in I^{\beta}$ from suitable intervals $I^{\alpha},\,I^\beta\subset\Rp$, see \cite{JuengelMatthes-entropy}. Specifically, $\fish=\Jfnc_1$, and $\ent=\lim_{\alpha\to1}\Ifnc_\alpha$; for $\alpha\downarrow0$, the limit of $I^\alpha$ yields a further Lyapunov functional 
\begin{align*}
    \fnclog(\rho) := -\int_\crc \log\rho\dd x,
\end{align*}
that is particularly useful for studying positivity of solutions, and has been heavily used in the first existence proof \cite{JunPin-analysis}. 

The more sophisticated structural properties of \eqref{eq:DLSS} concern its behaviour with respect to certain metrics. These are related to different alternative representations of \eqref{eq:DLSS}:
\begin{align}
    \label{eq:dlssVH}
    \partial_t\sqrt\rho &= -\partial_{xxxx}\sqrt\rho+\frac{\big(\partial_{xx}\sqrt\rho\big)^2}{\sqrt\rho}, \\
    \label{eq:dlssVW}
    \partial_t\rho 
    &= - 2 \partial_x \biggl( \rho\, \partial_x \biggl( \frac{\partial_{xx} \sqrt{\rho}}{\sqrt{\rho}}\biggr)\biggr)
    = \partial_{x}\big(\rho\,\partial_{x}\fish'(\rho)\big), \\
    \label{eq:dlssVD}
    \partial_t\rho 
    &= -\partial_{xx}\big(\rho\,\partial_{xx}\log\rho\big) 
    = -\partial_{xx}\big(\rho\,\partial_{xx}\ent'(\rho)\big),
\end{align}
where $\ent'(\rho)=\log\rho$ and $\fish'(\rho)=-2\,\partial_{xx}\sqrt\rho/\sqrt\rho$ denote the variational derivatives of entropy and Fisher information, respectively.
Introduce the Hellinger, the Wasserstein and diffusive transport distances, respectively, between probability measures $\mu_0$ and $\mu_1$ on $\crc$ by their respective dynamical formulations:
\begin{align}
    \label{eq:predef.hellinger}
	\hell(\mu_0,\mu_1)^2 
	&:= \inf_{(\mu_s,u_s)_{s\in[0,1]}}\left\{\int_0^1 \int_\crc u_s^2\dd\mu_s \dd s \,\middle|\,\partial_s\mu_s-u_s\mu_s=0\right\}, \\	
    \label{eq:predef.wasserstein}
    \wass(\mu_0,\mu_1)^2 
	&:= \inf_{(\mu_s,v_s)_{s\in[0,1]}}\left\{\int_0^1 \int_\crc v_s^2\dd\mu_s \dd s \,\middle|\,\partial_s\mu_s + \partial_x(v_s\mu_s)= 0 \right\}, \\
    \label{eq:predef.diffuse}
    \dst(\mu_0,\mu_1)^2 
	&:= \inf_{(\mu_s,w_s)_{s\in[0,1]}}\left\{\int_0^1 \int_\crc w_s^2\dd\mu_s \dd s \,\middle|\,\partial_s\mu_s-\partial_{xx}(w_s\mu_s)=0\right\},
\end{align}
where the infimum in each case is taken over pairs consisting of an $s$-dependent probability measure~$\mu_s$ on $\crc$ and a generalized velocity function $u_s$, $v_s$ or $w_s$, respectively, on $[0,1]\times\crc$, satisfying the given \change{reaction, first and second order continuity equation in distributional sense, respectively}. If $\mu_0=\rho_0\Leb$ and $\mu_1=\rho_1\Leb$ are absolutely continuous, then the dynamical formulation \eqref{eq:predef.hellinger} of the Hellinger distance is easily seen to be equivalent to the more classical definition $\hell(\rho_0,\rho_1) = \|\sqrt{\rho_1}-\sqrt{\rho_0}\|_{L^2}$. The dynamical formulation \eqref{eq:predef.wasserstein} of the Wasserstein distance has been obtained in~\cite{BenamouBrenier2000}. The definition \eqref{eq:predef.diffuse} of the diffusive transport distance is given here for the first time; with the condition $w\ge0$, this becomes a representation of marginal optimal transport \cite{HuesmannTrevisan2019}.

With respect to $\hell$, the flow of \eqref{eq:DLSS} is contractive, i.e.,
\begin{equation*}%
	\hell(\rho_1(t),\rho_2(t)) \leq \hell(\rho_1(s),\rho_2(s)) \qquad\text{for all } 0 \leq s \leq t ,
\end{equation*}
for any two solution $\rho_1, \rho_2$ to~\eqref{eq:DLSS}. Formally, this is an easy consequence of the third representation~\eqref{eq:dlssVH}, as has already been observed in \cite{JunPin-analysis}. A more rigorous treatment of this contractivity, with application to uniqueness can be found in \cite{Fischer-uniqueness}. With respect to $\wass$ and to $\dst$, \eqref{eq:DLSS} is a metric gradient flow \cite{AGS}, for the respective potentials $\fish$ and~$\ent$ from \eqref{eq:def:ent-fish}. Formally, this is seen by matching the dynamical formulations \eqref{eq:predef.wasserstein} and \eqref{eq:predef.diffuse} with the representations \eqref{eq:dlssVW} and \eqref{eq:dlssVD}. The gradient flow formulation with respect to $\wass$ has been made rigorous in \cite{GianazzaSavareToscani2009}. The observation of the gradient flow representation with respect to $\dst$ appears to be novel. 

In Section \ref{sec:DiffTrans:continuous}, we obtain the following:
\begin{manualresult}{B}\label{Result:DiffTrans}
    The diffusive transport $\dst$ defined in~\eqref{eq:predef.diffuse} is a metric $\dst$ on $\Prob(\crc)$ and \eqref{eq:DLSS} is (formally) the metric gradient flow of $\ent$ with respect to $\dst$.
    Moreover, the solutions obtained in Result~\ref{result:convergence} are Hölder continuous in time on $[0,\infty)$ with respect to the diffusive transport distance~$\dst$.
\end{manualresult}
As said before, the dynamic formulation of $\dst$ in~\eqref{eq:predef.diffuse} resembles the Benamou-Brenier formulation of martingale transport introduced in~\cite{HuesmannTrevisan2019}, see also~\cite{BeiglboeckJuillet2016,Backhoff-VeraguasBeiglboeckHuesmannKaellblad2020} and~\cite[§5.1]{Brenier2020hiddenconvexity}. In martingale transport, the diffusive field $(w_s)_{s\in [0,1]}$ is assumed to be non-negative, and thus possesses a stochastic pathwise formulation~\cite{TanTouzi2013,GalichonHenryLabordereTouzi2014} based on duality and stochastic control. 
For our considerations, the bi-directionality of the diffusion in~\eqref{eq:predef.diffuse} is essential, since \eqref{eq:DLSS} is formally “anti-diffusive” in regions where~$\rho$ is not log-concave, see Figures~\ref{fig:diffusive} and~\ref{fig:entropy}.

Recently, fourth order corrections of gradient flow type to the second order heat equation have been suggested in the physics literature~\cite{PanXuLouYao2006,Nika2023}. Also these flows use metrics similar to $\dst$ involving second order derivatives, however, with a mobility $\rho^2$ instead of~$\rho$.
Several examples of linear and non-linear \emph{second order} diffusion equations that are gradient flows with respect to $\dst$ --- including the linear heat and the quadratic porous medium eqaution --- are discussed in \cite{MRS}. Under suitable hypotheses, these flows are even semi-contractive in $\dst$.

\begin{figure}[htbp]\vspace{-0.5\baselineskip}
\begin{minipage}[b]{0.49\linewidth}
	\includegraphics[width=\linewidth]{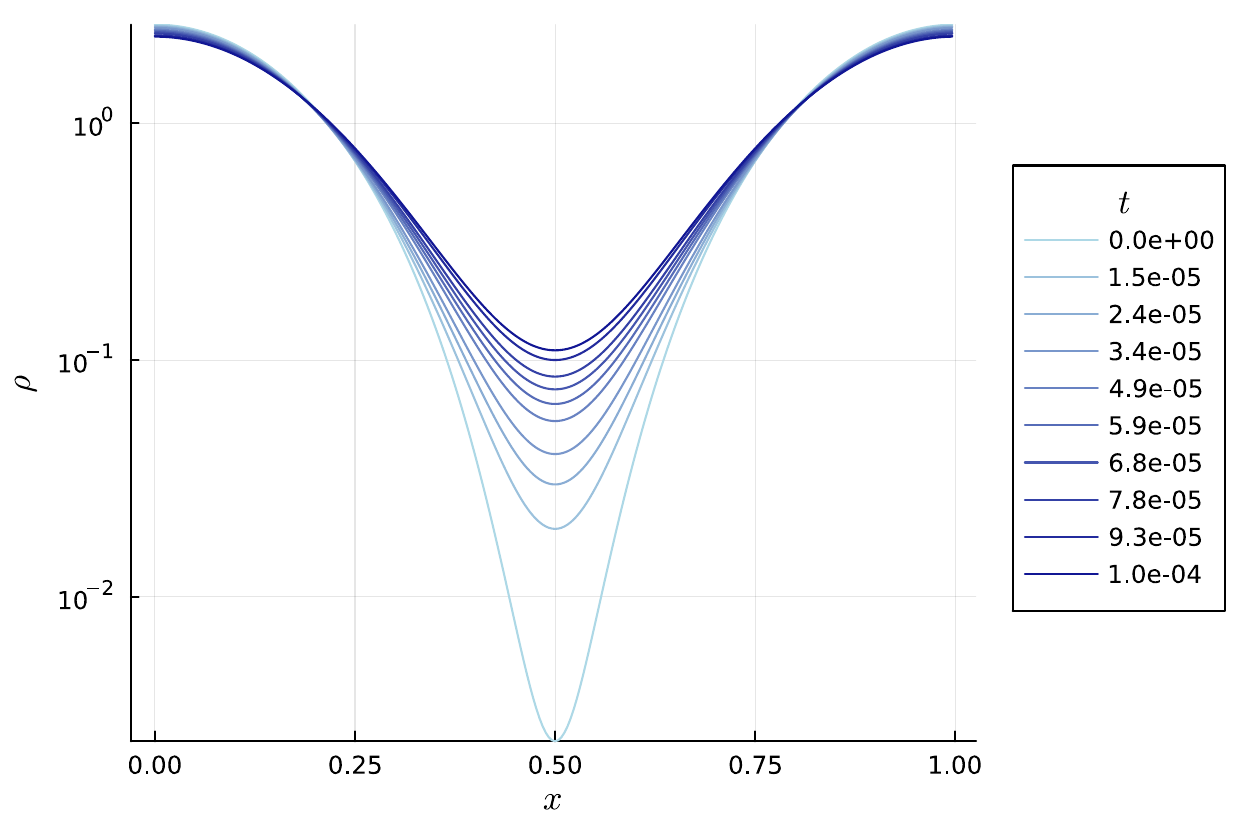}\\
	\includegraphics[width=\linewidth]{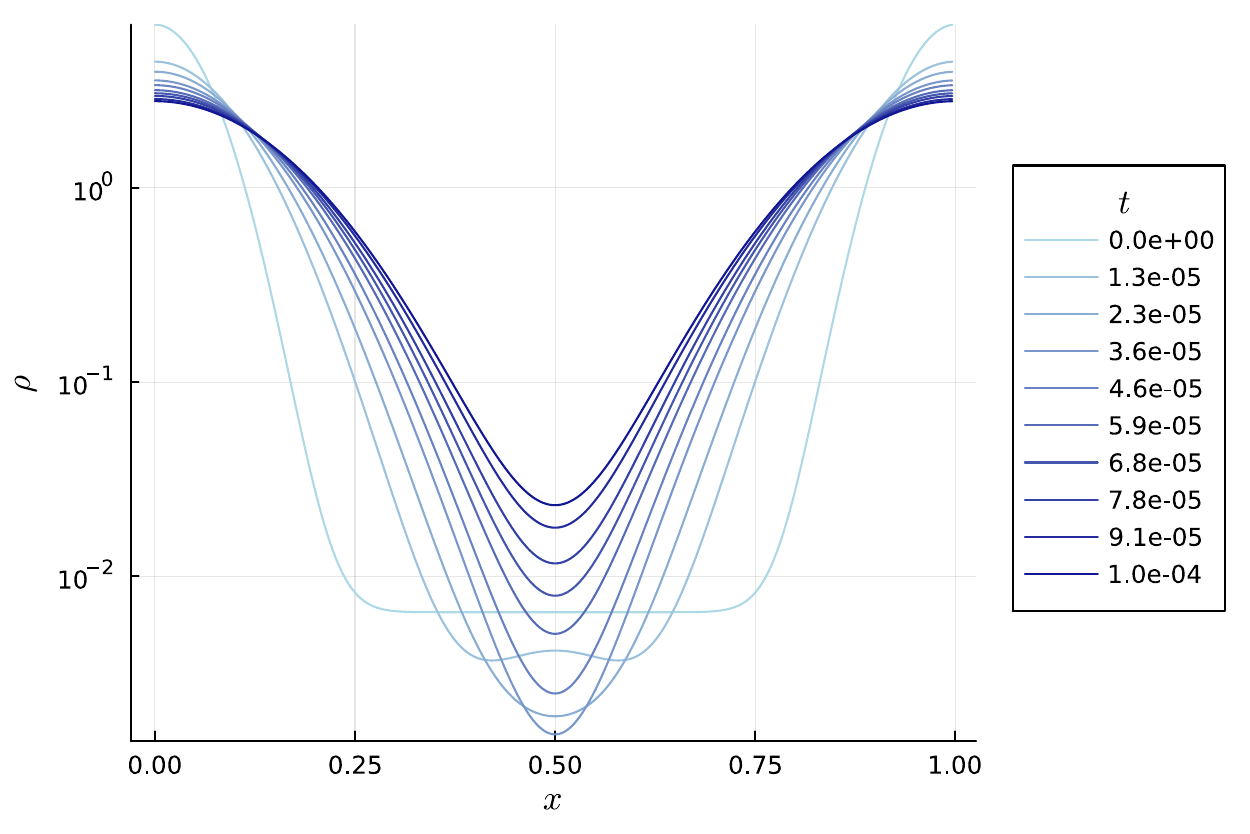}	
\end{minipage}\hfill
\begin{minipage}[b]{0.49\linewidth}
	\includegraphics[width=\linewidth]{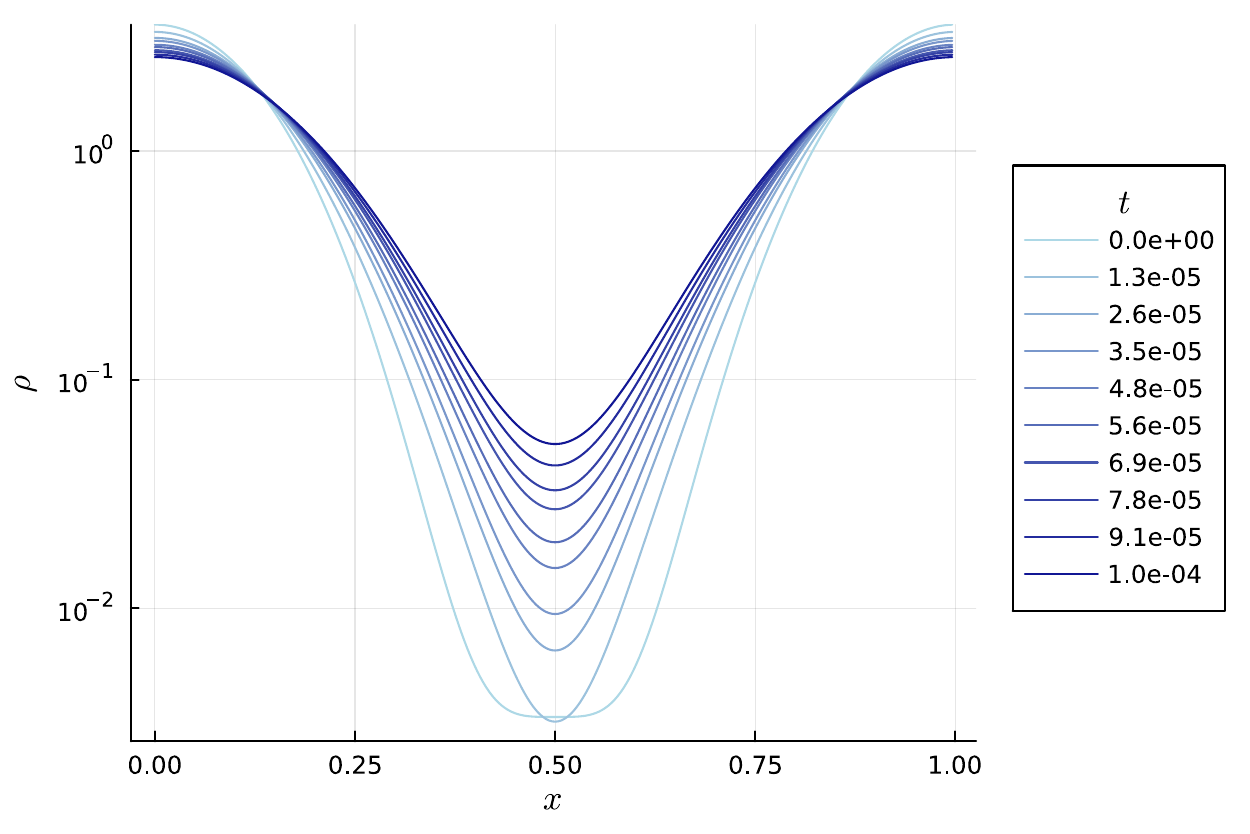}\\
	\includegraphics[width=\linewidth]{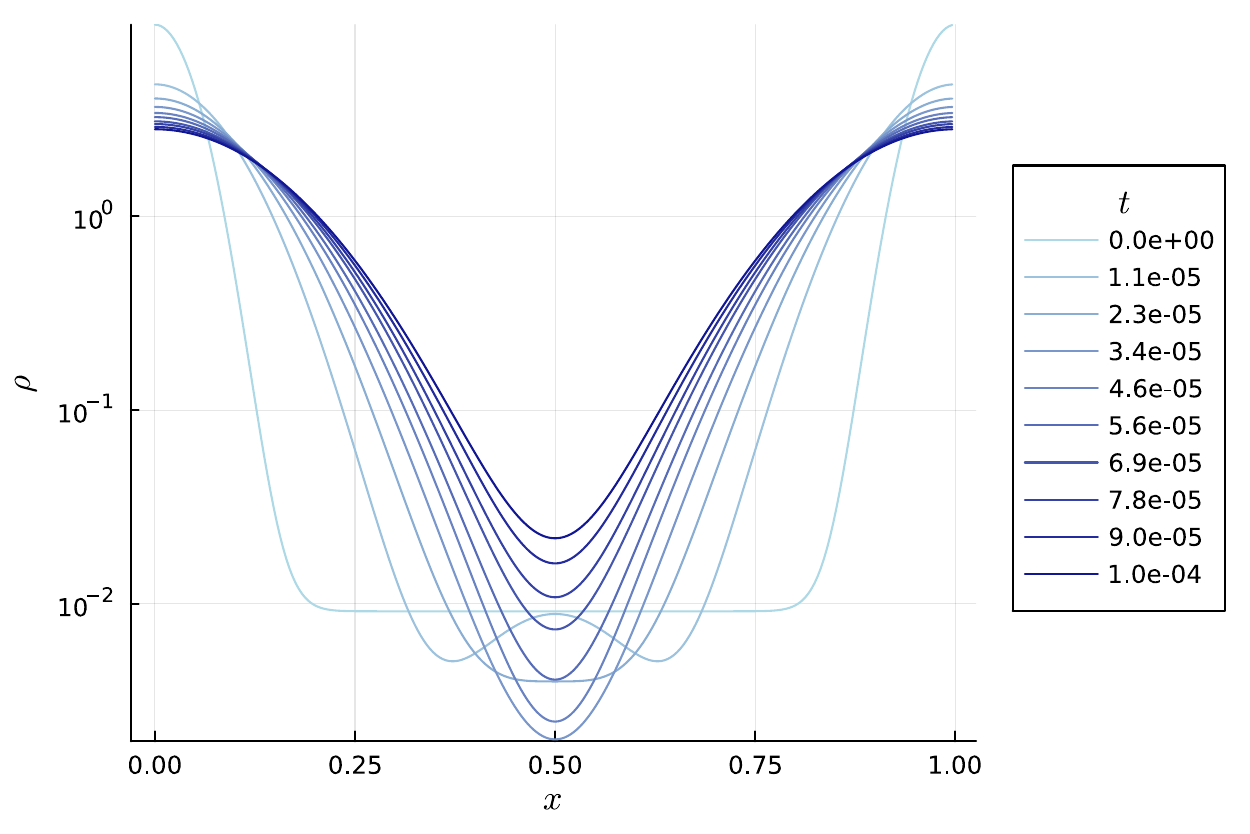}
\end{minipage}
\caption{Logarithmic plot of numerical solution to~\eqref{eq:dDLSS0} started from a discretization of $\bar \rho(x) = \change{Z_{m,\eps}^{-1}} \bra*{\eps^{1/2}+ \bra*{\frac{1+\cos(2\pi x)}{2}}^{\!m}}^{\!2}$ \change{with normalization constant $Z_{m,\eps}$ such that $\bar\rho\in \Prob(\crc)$}, and parameters $\eps=0.001$, $m=1,2,8,16$ (top left, top right, bottom left, bottom right).}	
\label{fig:diffusive}
\end{figure}

\begin{figure}[htbp]\vspace{-0.5\baselineskip}
\centering
\includegraphics[width=0.48\linewidth]{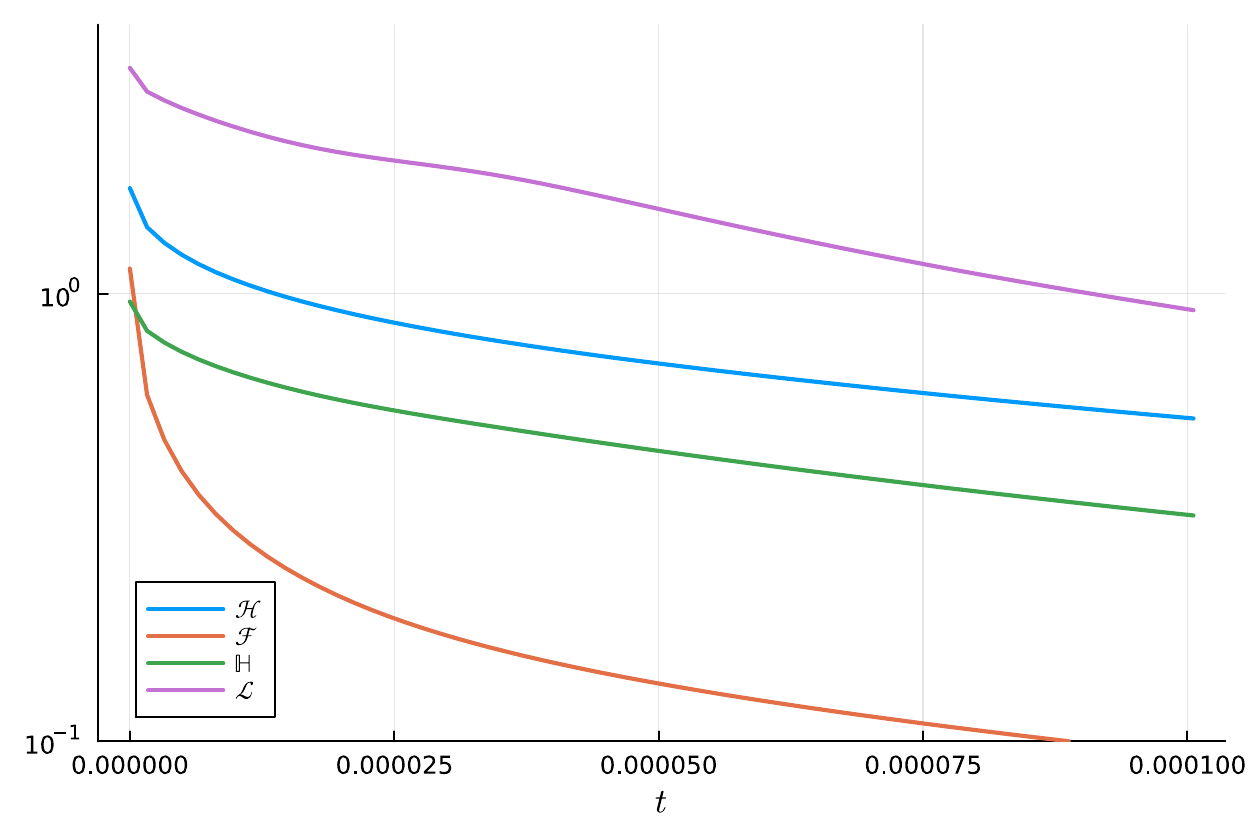}\hfill
\includegraphics[width=0.48\linewidth]{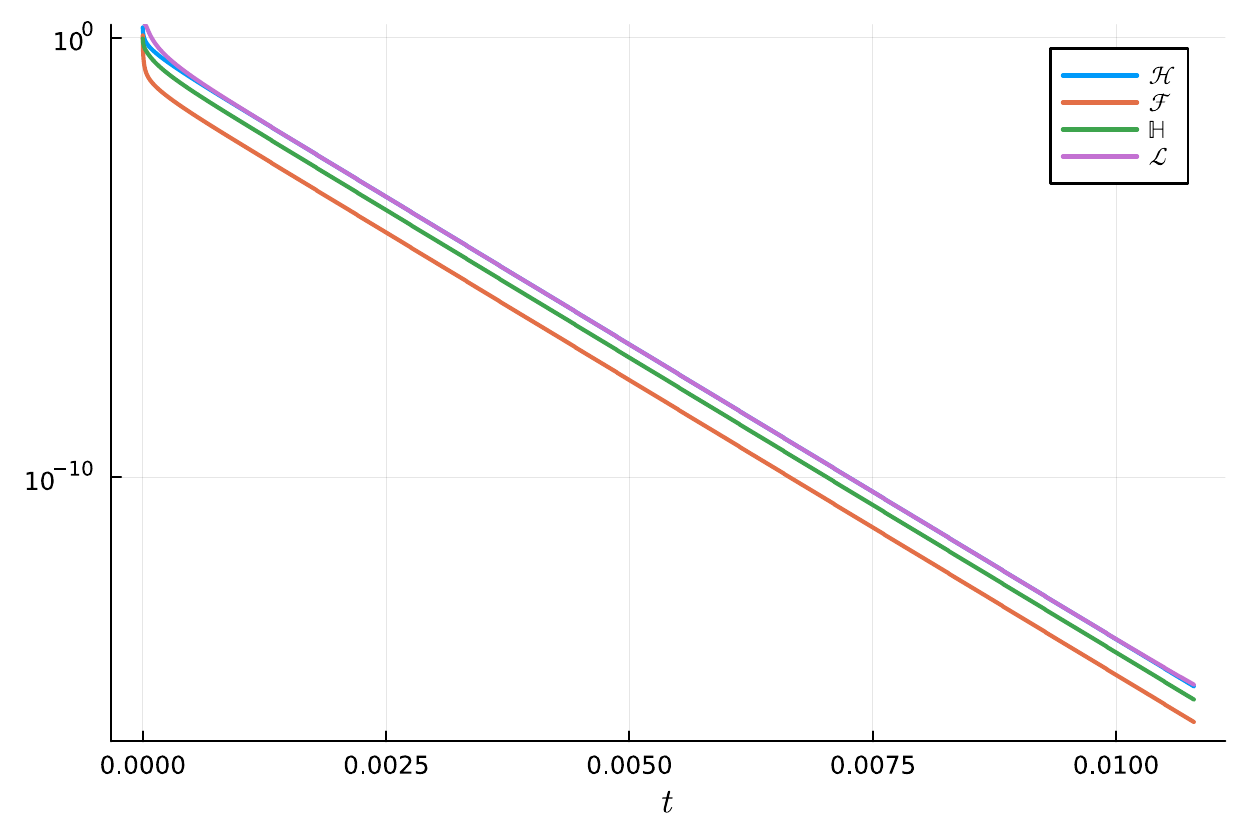}%
\caption{%
Semi-logarithmic plot of discretized Lyapunov functions $\ent$, $\fish$, $\hell(\cdot,1)$, and~$\fnclog$ for the initial datum from Figure~\ref{fig:diffusive} with $m=16$ (left: initial time interval; right: convergence till machine precision with asymptotic exponential rate $-12.2$).}
\label{fig:entropy}
\end{figure}

\subsection{DLSS equation --- numerical schemes}
Various genuinely different approaches to the numerical approximation of \eqref{eq:DLSS} have been proposed in the literature, with different ways to master the central challenge of obtaining non-negative solutions. Essentially all of the available schemes preserve a (typically small) selection of the aforementioned structural properties to the discrete level, and come with analytical results on convergence and/or large-time asymptotics of the discrete solutions.

A first group of schemes \cite{JunPin-numerics, BukalEmmrichJuengel2013, JuengelViolet,JuengelPina,Bukal2021} is concerned with the analysis of a certain semi-discretizations in time; the latter are then typically combined with ad hoc discretizations in space for numerical experiments. The temporal discretizations are designed to preserve specific Lyapunov functionals of the type $\Ifnc_\alpha$, with $\alpha\in I^\alpha$, from \eqref{eq:DLSSlyapunov}, including $\ent$ and $\lyp$. The discretization in \cite{JunPin-numerics} dissipates both $\ent$ and $\lyp$, the first provides convergence, the second positivity. The discretization in~\cite{Bukal2021} preserves the dissipation of $\Ifnc_{1/2}$, and additionally the contractivity in the Hellinger distance. For the schemes in \cite{BukalEmmrichJuengel2013,JuengelViolet}, a parameter can be chosen to select a specific $\alpha\in I^\alpha$ for $\Ifnc_\alpha$ to be dissipated; the extension in \cite{JuengelPina} dissipates $\Ifnc_\alpha$ for all $\alpha\in I^\alpha$ simultaneously. In each case, the respective dissipations provide an $H^2$-control on either $\log\rho$, or some root $\sqrt[m]\rho$, which are the sources for the convergence analysis. In \cite{JunPin-numerics2}, an generalization of \cite{JunPin-numerics} to multiple dimensions is proposed, where \eqref{eq:DLSS} is augmented by an additional term for lattice temperature that enforces positivity.

	A second group \cite{CarrilloTang,MaasMatthes,LiLuWang} is formed by schemes that are fully discrete, with a finite difference discretization in space.  \cite{CarrilloTang} directly builds on the PDE \eqref{eq:DLSS}, while \cite{LiLuWang,MaasMatthes} starts from the formulation as Wasserstein gradient flow. In each case, it is proven that the respective scheme admits non-negative solutions that dissipate a certain discretized version of $\ent$ or $\fish$, and, in case of \cite{MaasMatthes}, reproduce the long-time asymptotics including rates. There is no convergence analysis available. 

A third group of schemes \cite{DueringMatthesMilisic,MatthesOsberger} is fully Lagrangian. These schemes rely on the isometry between the $L^2$-Wasserstein metric and the $L^2$-norm in one space dimension. They are variational and dissipate a discretized Fisher information $\fish$. In \cite{MatthesOsberger}, additi	onally a version of $\ent$ is dissipated, which forms the basis for the convergence proof.

About our own discretization \eqref{eq:dDLSS0}, we prove in Section \ref{sct:dQDDprop} the following properties:
\begin{manualresult}{C}\label{Result:Scheme}
    The discretization \eqref{eq:dDLSS0} shares the following properties with \eqref{eq:DLSS}: existence of non-negative, mass-preserving solutions; dissipation of (discretized versions of) $\ent$, $\fish$ and $\fnclog$; contractivity in the Hellinger distance; gradient flow structure with respect to a (discretized) $L^2$-Wasserstein metric; the gradient flow structure with respect to a (discretized) diffusive transport metric; and an additional generalized gradient flow structure.
\end{manualresult}
Figure~\ref{fig:entropy} visualizes the decay of the different Lyapunov function.
We implemented the scheme~\eqref{eq:dDLSS0} using an implicit Euler approximation for the time derivative in the Julia language~\cite{Julia-2017}. 
The fixed point problem for the implicit time-stepping is solved using a Newton method through the NLSolve-package~\cite{NLSolve}. 
\change{
In our numerical test, we use an adaptive time-stepping algorithm which at the same time resolves the busy initial phase of the dynamics and allows to study the long-time asymptotics in a single simulation.
The adaptation is implemented the follows: the initial time-step is $\delta t=\delta/100$; later, $\delta t$ halved if more than four Newton iterations are needed, and is increased by five percent if less than three Newton iterations are sufficient. 
}
The simulation is stopped when the entropy is of the order of the machine precision.

\change{We haven chosen the implicit Euler approximation since this preserves the monotonicity of convex Lyapunov functionals. Specifically here, (discretized versions of) the entropy $\ent$, the Fisher information $\fish$, and the Lyapunov functional $\lyp$ are dissipated; monotonicity of $\lyp$ then further guarantees positivity of the solutions. For the same reason, the total mass is preserved, and the time stepping is contractive in the Hellinger distance.
We remark that the use of a \emph{variational} implicit time stepping --- in the spirit of de Giorgi's minimizing movements and in analogy to the JKO scheme~\cite{JKO1998} for Wasserstein gradient flows --- might appear more natural than the implicit Euler method. However, a genuinely variational approach will require a more thorough understanding of the underlying metric~$\dst$, see~\eqref{eq:predef.diffuse}, and an efficient method to compute its spatial discretization. We leave this for future research.}

\section{Diffusive transport on the continuous torus}\label{sec:DiffTrans:continuous}
In this section, we make the formal structure \eqref{eq:predef.diffuse} rigorous along the lines of \cite{DNS}, see also \cite{CLSS}. Below (and only here), we distinguish between probability measures $\mu\in\Prob(\crc)$ and their Lebesgue densities $\rho\in L^1(\crc)$. Further, denote by $\radon(\crc)$ the space of Radon measures on $\crc$. On a pair~$(\mu,\moment)$ with $\mu\in\Prob(\crc)$ and $\moment\in\radon(\crc)$, we define the \emph{action density} by
\begin{align}\label{eq:def:action_density}
	\act(\mu,\moment) := 
	\begin{cases}
	\displaystyle	\int_\crc \biggl|\frac{\dn\moment}{\dn\mu}\biggr|^2\dd\mu, & \moment\ll \mu;\\
		+\infty , & \text{else}.
	\end{cases}
\end{align}
If $\mu$ is absolutely continuous with density function $\rho$ with respect to Lebesgue, and $\moment$ is absolutely continuous with density function $w$ with respect to $\mu$, then 
\begin{align}
    \label{eq:reducedactionfunction}
    \act(\rho\Leb,w\,\rho\Leb) = \int_\crc \rho\, w^2\dd x.
\end{align}
The functional $\act$ is lower semi-continuous with respect to vague convergence --- this follows in analogy to \cite[Lemma 3.9]{DNS}.

Next, we consider parametrized families $(\mu^s,\moment^s)_{s\in[0,1]}$ of pairs $\mu^s\in\Prob(\crc)$, $\moment^s\in\radon(\crc)$, measurable with respect to $s$. Such a family is called a \emph{curve} if it satisfies the second-order continuity equation
\begin{align}\label{eq:diffusive:CE}
	\partial_s\mu^s = \partial_{xx}\moment^s
\end{align}
in the sense of distributions on $\crc\times[0,1]$. Moreover, a curve is \emph{of finite action}, if
\begin{align}\label{eq:finite:action}
	\action\big[(\mu^s,\moment^s)_{s\in[0,1]}\big] := \int_0^1 \act(\mu^s,\moment^s)\dd s < \infty .
\end{align}
An adaptation of the proof of \cite[Lemma 8.1.2]{AGS} easily shows that for a curve $(\mu^s,\moment^s)_{s\in[0,1]}$ of finite action, one may always assume (after modifications on a set of \change{zero} measure in $[0,1]$) that $s\mapsto\mu^s$ is weakly continuous, and in particular has well-defined initial and terminal values $\mu^0$ and $\mu^1$, respectively. Accordingly, for given $\mu^0,\mu^1\in\Prob(\crc)$, the set $\connect(\mu^0,\mu^1)$ of curves $(\mu^s,\moment^s)_{s\in[0,1]}$ of finite action~\eqref{eq:finite:action} satisfying~\eqref{eq:diffusive:CE} that connect $\mu^0$ to $\mu^1$ is a well-defined object.

In the following, we use \change{the homogeneous Sobolev space $\dot H^{2}(\crc)$ defined by 
\begin{equation*}
	\dot H^2(\crc) := \set*{ f\in H^2(\crc): \int_\crc f \dx{x} = 0} \,,
\end{equation*}
which is a Banach space with respect to the norm $\|f\|_{\dot H^2(\crc)}:=\|\partial_{xx}f\|_{L^2(\crc)}$
thanks to the Poincaré inequality on $\crc$. The dual Sobolev space $\dot H^{-2}(\crc)$ carries the norm 
\begin{equation*}%
	\norm{\nu}_{\dot H^{-2}(\crc)} := \sup\set*{ \abs*{\skp{ f, \nu}} : f\in \dot H^2(\Omega),\,\norm{f}_{\dot H^2(\crc)} \leq 1 } . 
\end{equation*}
Since $\dot H^2(\crc)$ embeds continuously in the continuous functions $C(\crc)$, its dual $\dot H^{-2}(\crc)$ contains in particular the Radon measures of zero average on $\crc$. In the proof of Lemma \ref{lem:H-2} below, we use an equivalent formulation of the norm in $\dot H^{-2}(\crc)$ on the difference of two probability measures $\mu^0,\mu^1\in\Prob(\crc)$:
\begin{equation*}%
	\norm{\mu^1-\mu^0}_{\dot H^{-2}(\crc)}^2 = \inf\set*{ \int_{\crc} \abs{w}^2 \dd x : \partial_{xx} w = \mu^1-\mu^0 \text{ in distributional sense} } .
\end{equation*}
Note that the double primitive $w$ is uniquely determined up to a constant on $\crc$. 
It is is easy to see that the infinimum is attained for the one of zero average,
\begin{equation}\label{eq:def:H-2:doubprim}
	\norm{\mu^1-\mu^0}_{\dot H^{-2}(\crc)}^2 = \int_{\crc} \abs{W}^2 \dd x \ \text{ with }\ \partial_{xx} W = \mu^1-\mu^0 \text{ in distributional sense and } \int_\crc W \dd x =0 .
\end{equation}}
\begin{lemma}[Comparison with $\dot H^{-2}(\crc)$-norm]\label{lem:H-2}
	For any given $\mu^0,\mu^1\in\Prob(\crc)$, 
    there exists a connecting curve $(\mu^s,\moment^s)_{s\in[0,1]}\in\connect(\mu^0,\mu^1)$ with  
	\begin{equation}\label{eq:comp:H-2}
		 \action\big[(\mu^s,\moment^s)_{s\in[0,1]}\big] 
        \le 2 \bigl(-\log \|\mu^1-\mu^0\|_{\dot H^{-2}(\crc)}^2\bigr)^{-1}.
	\end{equation}
	In particular, any two measures in $\Prob(\crc)$ can be connected by a curve of action less than one.
\end{lemma}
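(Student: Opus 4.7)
The plan is to explicitly construct an element of $\connect(\mu^0,\mu^1)$ via heat-semigroup regularization in three phases, then to optimize the regularization parameter. The naive candidate---a straight-line interpolation $\mu^s = (1-s)\mu^0 + s\mu^1$ with $s$-constant momentum $\moment^s \equiv W$, where $W \in L^2(\crc)$ is the zero-mean function with $\partial_{xx}W = \mu^1-\mu^0$ (so $\|W\|_{L^2} = \epsilon$ with $\epsilon := \|\mu^1-\mu^0\|_{\dot H^{-2}(\crc)}$)---already satisfies the continuity equation \eqref{eq:diffusive:CE}, but its action density $\int W^2/\mu^s\,\dd x$ is infinite wherever $\mu^s$ vanishes. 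The cure is to regularize via the heat semigroup $(P_t)_{t\ge 0}$ on $\crc$: for every $\mu \in \Prob(\crc)$ and $t > 0$, $P_t\mu$ has a smooth density bounded below by $c_t := \min_{x \in \crc} p_t(x) > 0$, with the classical small-time bound $c_t \gtrsim (4\pi t)^{-1/2} e^{-1/(16t)}$ coming from the $n=0$ term of the circle-heat-kernel series.

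Fix $T > 0$ to be chosen later and construct $(\mu^s,\moment^s)$ in three phases. On Phase~I, $s \in [0,1/3]$, set $\mu^s := P_{3sT}\mu^0$ and $\moment^s := 3T\,\mu^s$; then $\partial_s\mu^s = 3T\partial_{xx}\mu^s = \partial_{xx}\moment^s$ and the action density is $(3T)^2$, contributing $3T^2$ to the total. On Phase~II, $s \in [1/3,2/3]$, linearly interpolate $\mu^s := (2-3s)P_T\mu^0 + (3s-1)P_T\mu^1$ and set $\moment^s := 3 P_T W$; then $\partial_s\mu^s = 3\partial_{xx}(P_T W) = \partial_{xx}\moment^s$, and combining $\mu^s \ge c_T$ pointwise with $\|P_T W\|_{L^2} \le \|W\|_{L^2} = \epsilon$ bounds the action density by $9\epsilon^2/c_T$, so Phase~II contributes $3\epsilon^2/c_T$. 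Phase~III, $s \in [2/3,1]$, is the time-reverse of Phase~I starting from $\mu^1$ and contributes another $3T^2$. The glued curve lies in $\connect(\mu^0,\mu^1)$---$\moment^s$ has jumps at $s = 1/3,2/3$, which is harmless distributionally---and the total action is bounded by $6T^2 + 3\epsilon^2/c_T$.

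Choosing $T$ of order $|\log\epsilon|^{-1/2}$ then renders the first summand of order $|\log\epsilon|^{-1}$, while the second summand becomes negligible because $\epsilon^2 e^{1/(16T)} = \epsilon^{2-o(1)}$ decays polynomially in $\epsilon$ (hence faster than any inverse power of $|\log\epsilon|$). Tuning the prefactor of $T$ recovers the claimed bound $\action[(\mu^s,\moment^s)] \le 2/(-\log\epsilon^2)$. For the closing assertion, any $\mu^0,\mu^1 \in \Prob(\crc)$ satisfy, by Parseval and $|\hat\mu_k| \le 1$, the universal estimate
\[
    \epsilon^2 = \sum_{k \ne 0} \frac{|\widehat{(\mu^1-\mu^0)}(k)|^2}{(2\pi k)^4} \le \sum_{k \ne 0} \frac{4}{(2\pi k)^4} = \frac{1}{180} < 1,
\]
so that $2/(-\log\epsilon^2) \le 2/\log 180 < 1$.

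The main obstacle is the delicate balancing in the choice of $T$. Since $c_T$ decays like $e^{-1/(16T)}$ as $T \to 0^+$, $T$ cannot be too small without destroying the Phase~II bound; but $T^2$ grows polynomially, so $T$ cannot be too large either. Securing the exact constant $2$ in the stated threshold requires the sharp small-time lower bound on the heat kernel on $\crc$ and careful bookkeeping of constants through the optimization.
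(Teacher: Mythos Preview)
Your proposal is correct and follows essentially the same three-phase construction as the paper: forward heat flow, linear interpolation between the regularized endpoints, then backward heat flow, with the heat-kernel lower bound $c_t \gtrsim (4\pi t)^{-1/2}e^{-1/(16t)}$ controlling the middle phase.

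The only noteworthy difference is the parametrization. The paper runs the heat equation at unit speed on $[0,\tau]$ and $[1-\tau,1]$ with variable $\tau$, so each heat phase contributes action exactly $\tau$; you instead fix the phases to thirds and rescale the heat speed, making the heat phases contribute $3T^2$. This shifts the optimization: the paper balances by taking $\tau = \tfrac{1}{8}(-\log H)^{-1}$ so that $e^{1/(16\tau)} = H^{-1/2}$ exactly, whereas you take $T \sim |\log\epsilon|^{-1/2}$ and argue the interpolation phase is lower order. Both routes reach the constant $2$, though your sketch defers the explicit bookkeeping; note that with your parametrization one could equally well take $T = \tfrac{1}{8}(-\log H)^{-1}$, which makes $6T^2$ of order $|\log H|^{-2}$ and the verification even cleaner. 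Your Parseval bound $\epsilon^2 \le 1/180$ is a nice alternative to the paper's explicit double-primitive estimate $H \le 1/12$.
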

\begin{proof}
	We construct a particular connecting curve of finite action for any $\tau\in (0,1/2)$ as follows. For $0<s\le\tau$, let $\mu^s_\tau$ be the time-$s$-solution to the heat equation $\partial_s\mu^s_\tau =\partial_{xx}\mu^s_\tau$ with initial datum $\mu_\tau^0=\mu^0$, and for $1-\tau\le s<1$ let $\mu^s_\tau$ by the time-$s$-solution to the time-reversed heat equation $\partial_s\mu^s_\tau =-\partial_{xx}\mu^s_\tau$ with terminal datum $\mu_\tau^1=\mu^1$. For $s\in(\tau,1-\tau)$, define $\mu_\tau^s$ by linear interpolation,
	\[ \mu^s_\tau := \Bigl(\frac{1-\tau-s}{1-2\tau}\Bigr)\mu^{\tau}_\tau+\Bigl(\frac{s-\tau}{1-2\tau}\Bigr)\mu^{1-\tau}_\tau.\]
    By the properties of the heat equation, $\mu_\tau^s$ is absolutely continuous with respect to the Lebesgue measure on $\crc$ at each $s\in(0,1)$; denote the corresponding probability density by $\rho_\tau^s$. We obtain a positive lower bound on $\rho_\tau^s$ for $s \le\tau$ and for $s\ge1-\tau$ using the representation by means of convolution with the  periodic heat kernel $k^s$, see~\cite{Ohyama1995}, given by 
	\begin{align*}
		k^s(z) := \frac{1}{\sqrt{4\pi s}} \sum_{\ell \in \Z} e^{-\frac{(x-\ell)^2}{4 s}} 
		\geq \frac{1}{\sqrt{4\pi s}} \exp\bra*{-\frac{\operatorname{dist}(x,\Z)^2}{4s}} 
        \geq \frac{1}{\sqrt{4\pi s}} \exp\bra*{-\frac{1}{16 s}} . 
	\end{align*}
    In particular, also using that linear interpolation preserves lower bounds,
    \begin{align*}
        \rho_\tau^s \ge \frac1{\sqrt{4\pi\tau}}e^{-1/16\tau} 
        \qquad\text{for $s\in[\tau,1-\tau]$}.
    \end{align*} 
    The continuity equation $\partial_s\mu^s_\tau=\partial_{xx}\moment^s_\tau$ is satisfied with $\moment^s_\tau=\mu^s_\tau$ and $\moment^{s}_\tau=-\mu^{s}_\tau$, respectively, for $s\in[0,\tau]$ and for $s\in[1-\tau,1]$. For $s\in(\tau,1-\tau)$, we choose $\moment^s_\tau=w_\tau$ independently of $s$, where $w_\tau=(1-2\tau)^{-1}W$, and $W$ is the mean-zero double primitive of $\rho_\tau^{1-\tau}-\rho_\tau^\tau$, which exists since $\rho_\tau^{1-\tau}$ and $\rho_\tau^\tau$ have the same (unit) mass. By definition of the $\dot H^{-2}$-norm as in~\eqref{eq:def:H-2:doubprim}, and since the heat flow is non-expansive on $\dot H^{-2}(\crc)$,
    \begin{align*}
        \|W\|_{L^2(\crc)}^2 = \norm{\mu_\tau^{1-\tau}-\mu_\tau^\tau}_{\dot H^{-2}(\crc)}^2 \le H:= \norm{\mu^1-\mu^0}_{\dot H^{-2}(\crc)}^2. 
    \end{align*}
    A double primitive $w$ of $\mu^0-\mu^1$ is given at any $x\in [0,1]\mathrel{\hat=}\crc$ by 
    	\[
    	  W(x) := \int_{1/2}^x \int_{1/2}^y \dd(\mu^0-\mu^1)(z) \dd y. 
    	\]
    Since $\mu^0,\mu^1\in\Prob(\crc)$, we observe that $\abs{W(x)} \leq \abs{x-1/2}$, hence from \eqref{eq:def:H-2:doubprim}, we conclude that
    \begin{align}\label{eq:H:upper}
        H \le \norm{W}_{L^2(\crc)}^2 \leq \int_{0}^1 \abs{x-1/2}^2 \dd x= \frac{1}{12}.
	\end{align}
    The action can now be estimated as follows:
    \begin{align*}
		\action\big[(\mu^s_\tau,\moment^s_\tau)_{s\in[0,1]}\big]
		&\leq \int_0^{\tau} \int_{\crc} \frac{(\rho_\tau^s)^2}{\rho_\tau^s}\dd x\dd s + \int_{\tau}^{1-\tau} \frac{w_\tau^2}{\rho_\tau^s}\dd x\dd s + \int_{1-\tau}^1 \int_{\crc} \frac{(\rho_\tau^s)^2}{\rho_\tau^s}\dd x\dd s \\
        &\le \int_0^\tau 1 \dd s + \frac{\sqrt{4\pi\tau}e^{1/(16\tau)}}{(1-2\tau)^2}\int_\tau^{1-\tau} \|W\|_{L^2(\crc)}^2\dd s + \int_{1-\tau}^1 1 \dd s\\
        &\le 2\tau + \frac{\sqrt{4\pi \tau} e^{1/(16\tau)}}{1-2\tau} H. 
	\end{align*}
    We can choose $\tau$ as
    \begin{align*}
        \tau:=\frac1{8} (-\log H)^{-1} \in (0,1/16),
    \end{align*}
    so that $e^{1/(16\tau)}=\sqrt H$,
    where the bound follows from~\eqref{eq:H:upper}. With that, we obtain
 \begin{align*}
 	\action\big[(\mu^s_\tau,\moment^s_\tau)_{s\in[0,1]}\big]
 	&\le \frac1{4(-\log H)} + \frac{\sqrt{4\pi/16}\sqrt{H}}{(1-2/16)}\\
 	&\le \frac{1}{-\log H} \bra*{ \frac1{4} + \frac{\sqrt{\pi}}{2} \frac{8}{7} \sqrt{H}(-\log H)} \le 2(-\log H)^{-1},
 \end{align*}
 where for the last step, we use that $H\mapsto \sqrt{H}(-\log H)$ is monotone on $[0,1/12]$ (maximum at $H=e^{-2}\leq  1/12$) and hence bounded by $\sqrt{1/12}\log 12$. The whole term in brackets evaluates to~$1.17$, 
 which gives the claim.
\end{proof}
We can now state the first part of Result~\ref{Result:DiffTrans}.
\begin{proposition}%
	The \emph{diffusive transport distance} $\dst$, defined between given $\mu^0,\mu^1\in\Prob(\crc)$ by
	\begin{align*}
		\dst(\mu^0,\mu^1)
		:= \inf_{(\mu^s,\moment^s)_{s\in[0,1]}\in\connect(\mu^0,\mu^1)}\action\big[(\mu^s,\moment^s)_{s\in[0,1]}\big]^{1/2},
	\end{align*}
	is a metric on $\Prob(\crc)$, and turns $\Prob(\crc)$ into a complete geodesic space metrizing weak convergence.
    Moreover, $\dst$ has the comparison
    \begin{equation}\label{eq:D:comp}
        \norm{\mu^0-\mu^1}_{\dot W^{2,\infty}(\crc)'}^2 \leq \dst(\mu^0,\mu^1)^2 \leq 
        2 \bigl(-\log \|\mu^1-\mu^0\|_{\dot H^{-2}(\crc)}^2\bigr)^{-1} ,
    \end{equation}    
    where $\dot W^{2,\infty}(\crc)'$ denotes the dual norm to $\dot W^{2,\infty}(\crc)$. 
\end{proposition}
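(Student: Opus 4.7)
The plan is to follow the now-standard scheme of \cite{DNS,CLSS} for dynamic metrics defined via (generalized) continuity equations, adapting it to the second-order equation \eqref{eq:diffusive:CE}. The proposition has three components: (i) the comparison inequalities \eqref{eq:D:comp}, (ii) the metric and geodesic-space axioms, and (iii) completeness and metrization of weak convergence. I would treat (i) first, since it supplies the key separation and continuity properties that drive (ii) and (iii).

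\textbf{Comparison inequalities.} The upper bound is exactly Lemma~\ref{lem:H-2}. For the lower bound I would argue by duality. Given $\phi\in\dot W^{2,\infty}(\crc)$ and any $(\mu^s,\moment^s)_{s\in[0,1]}\in\connect(\mu^0,\mu^1)$ of finite action, the distributional form of \eqref{eq:diffusive:CE} gives
\[
\skp*{\phi,\mu^1-\mu^0} \;=\; \int_0^1 \skp*{\pxx\phi,\moment^s}\dd s,
\]
so $\abs{\skp{\phi,\mu^1-\mu^0}} \le \norm{\pxx\phi}_{L^\infty}\int_0^1 \abs{\moment^s}(\crc)\dd s$. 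Since $\mu^s$ is a probability measure, Cauchy–Schwarz in space yields $\abs{\moment^s}(\crc)\le \act(\mu^s,\moment^s)^{1/2}$; a further Cauchy–Schwarz in $s$ bounds the $s$-integral by $\action[(\mu^s,\moment^s)_{s\in[0,1]}]^{1/2}$. Taking the supremum over $\phi$ with $\norm{\pxx\phi}_{L^\infty}\le 1$ and the infimum over connecting curves delivers the left inequality in \eqref{eq:D:comp}.

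\textbf{Metric axioms and geodesic existence.} Symmetry follows by time reversal $(\mu^{1-s},-\moment^{1-s})$, which preserves both \eqref{eq:diffusive:CE} and the action. Non-degeneracy is immediate from the lower comparison, since $\norm{\cdot}_{\dot W^{2,\infty}(\crc)'}$ separates probability measures on $\crc$. For the triangle inequality I would use the classical reparametrize-and-concatenate trick: given near-optimal curves from $\mu^0$ to $\mu^1$ and from $\mu^1$ to $\mu^2$, time-rescale them onto $[0,\theta]$ and $[\theta,1]$ respectively and observe that under the substitution $s\mapsto\theta s$ the second-order continuity equation rescales so that $\moment^s$ is multiplied by $1/\theta$, hence the action scales by $1/\theta$ on the first piece and $1/(1-\theta)$ on the second. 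Optimizing in $\theta$ produces precisely the triangle inequality for $\dst^{1/2}$. Existence of minimizing geodesics then follows by taking a minimizing sequence of connecting curves, extracting a vaguely convergent subsequence (bounded $\action$ forces bounded total variation of $\moment^s$ via the same Cauchy–Schwarz estimate above, hence weak-$*$ precompactness in $s$), and invoking the lower semicontinuity of $\action$ with respect to vague convergence—an adaptation of the proof of \cite[Lemma 3.9]{DNS} already referenced in the paper.

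\textbf{Weak topology and completeness.} The two comparison bounds of \eqref{eq:D:comp} supply both directions of the metrization claim: weak convergence $\mu_n\rightharpoonup\mu$ on the compact manifold $\crc$ implies convergence in $\dot H^{-2}(\crc)$ (by the Rellich-type embedding, testing against a countable dense family of smooth functions), and then the upper bound in \eqref{eq:D:comp} gives $\dst(\mu_n,\mu)\to 0$. Conversely, $\dst(\mu_n,\mu)\to 0$ implies $\norm{\mu_n-\mu}_{\dot W^{2,\infty}(\crc)'}\to 0$, and combined with mass conservation and the density of $C^2(\crc)\cap\{\int\phi=0\}$ in mean-zero continuous functions this yields $\mu_n\rightharpoonup\mu$. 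Completeness is then a standard consequence: a $\dst$-Cauchy sequence is tight (all elements are probability measures on the compact $\crc$), hence has a weakly convergent subsequence, and the limit agrees in $\dst$ by lower semicontinuity.

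\textbf{Main obstacle.} I expect the technically heaviest step to be the vague-lower-semicontinuity of $\action$ together with the stability of the second-order continuity equation under weak limits of $(\mu^s,\moment^s)$. The key ingredient—an a~priori total variation bound on $\moment^s$ in terms of $\act(\mu^s,\moment^s)^{1/2}$, available precisely because $\mu^s$ is a probability measure—is already supplied by the duality estimate above, so the remaining work reduces to the familiar joint lower-semicontinuity/compactness package from \cite{DNS,CLSS}, reformulated for the $\pxx$-continuity equation instead of the divergence form.
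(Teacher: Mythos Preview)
Your proposal is correct and follows essentially the same route as the paper: the upper bound from Lemma~\ref{lem:H-2}, the lower bound by testing the continuity equation against $\varphi\in\dot W^{2,\infty}(\crc)$ with Cauchy--Schwarz in space and time, geodesic existence via a minimizing sequence plus vague compactness and lower semicontinuity of $\action$, and completeness/metrization from the two-sided comparison together with compactness of $\crc$. One cosmetic remark: in your completeness step, ``the limit agrees in $\dst$ by lower semicontinuity'' should really read ``by the upper comparison'' (weak convergence $\Rightarrow$ $\dot H^{-2}$ convergence $\Rightarrow$ $\dst$ convergence), which you already established; the paper makes this passage explicit.
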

\begin{proof}
	From Lemma \ref{lem:H-2} and the fact that $\act$ is non-negative, it follows that $\dst$ is a well-defined map from pairs of probability measures into the non-negative real numbers.%
	
	As an intermediate result, we show that curves $(\mu^s,\moment^s)_{s\in[0,1]}$ of \change{uniformly bounded} action are uniformly H\"older continuous in the dual of \change{$\dot W^{2,\infty}(\crc)$. Let $\varphi\in \dot W^{2,\infty}(\crc)$} be given,
	then by \change{recalling definition~\eqref{eq:def:action_density} of the action density, we can estimate for $0\le t_0<t_1\le1$ by the Hölder inequality}
	\begin{align}
		\int_\crc \varphi\dd(\mu^{t_0}-\mu^{t_1}) 
		&%
		= \int_{t_0}^{t_1}\int_\crc \partial_{xx}\varphi\dd\moment^s\dd s 
        = \int_{t_0}^{t_1}\partial_{xx}\varphi\frac{\dn\moment^s}{\dn\mu^s}\dd\mu^s\dd s \nonumber\\
		&\le \|\varphi\|_{\change{\dot W^{2,\infty}(\crc)}}\biggl(\int_{t_0}^{t_1}\int_\crc\dd\mu^s\dd t\biggr)^{\!1/2} \change{\biggl(\int_{t_0}^{t_1}	\act(\mu_s,\moment_s)\dd t\biggr)^{\!1/2} } \nonumber \\
		&\le \|\varphi\|_{\change{\dot W^{2,\infty}(\crc)}} \change{\action\bigl[(\mu^s,\moment^s)_{s\in[0,1]}\bigr]^{1/2}} \,.\label{eq:dst-cont}
	\end{align}
	To prove from here that the infimum is actually a minimum: consider a minimizing sequence $(\mu^s_n,\moment^s_n)_{s\in[0,1]}$ in $\connect(\mu^0,\mu^1)$. By the $n$-uniform H\"older continuity of the $\mu^s_n$ in the dual of $C^2(\crc)$, the Arzela-Ascoli theorem guarantees the existence of a subsequence converging locally uniformly with respect to $s\in[0,1]$ to a continuous limit $\mu_*^s$. In particular, $(\mu_n^s)_{s\in[0,1]}\to(\mu_*^s)_{s\in[0,1]}$ vaguely on $(0,1)\times\crc$, and initial and terminal value are inherited by $\mu_*$. Further, since the action is uniformly bounded, also the total variation of the $(\moment_n^s)_{s\in[0,1]}$ on $(0,1)\times\crc$ is $n$-uniformly bounded, and for a further subsequence, $(\moment_n^s)_{s\in[0,1]}$ converges vaguely to a limit $(\moment_*^s)_{s\in[0,1]}$. The continuity equation clearly passes to the limit, hence $(\mu_*^s,\moment_*^s)_{s\in[0,1]}\in\connect(\mu^0,\mu^1)$. Finally, by lower semi-continuity of $\action$,
	\begin{align*}
		\dst(\mu^0,\mu^1)^2 
		\le \action\bigl[(\mu_*^s,\moment_*^s)_{s\in[0,1]}\bigr]
		\le \liminf_{n\to\infty}\action\big[(\mu_n^s,\moment_n^s)_{s\in[0,1]}\big]
		=\dst(\mu^0,\mu^1)^2 .
	\end{align*}        
	Symmetry and triangle inequality follow by abstract arguments easily from the definition. It remains to verify the axiom that $\dst(\mu^0,\mu^1)=0$ implies $\mu^0=\mu^1$; but this is a consequence of the existence of a minimizer above.
	\change{Indeed, $\action\bigl[(\mu_*^s,\moment_*^s)_{s\in[0,1]}\bigr]=0$ implies by means of \eqref{eq:dst-cont} above that $\mu_*^s$ is actually independent of $t\in[0,1]$, and in particular that $\mu^0=\mu^1$.}
	In conclusion, we have proven that $\Prob(\crc)$ with $\dst$ is a geodesic metric space.
	
    Next, we verify completeness. To begin with, observe that if $\mu^n\rightharpoonup \mu$ weakly in measures, then also \change{$\norm{\mu^n - \mu}_{\dot H^{-2}(\crc)} \to 0$}, since $\dot H^2(\crc)\hookrightarrow C(\crc)$, and so also $\dst(\mu^n,\mu)\to 0$ by Lemma~\ref{lem:H-2}. Now consider a Cauchy sequence $\mu^n$ with respect to $\dst$; it suffices to show that $\mu^n$ has a weak limit. Actually, by compactness of $\crc$, it is clear that any subsequence of $(\mu^n)$ possesses a weakly convergent sub-subsequence; it thus suffices to show that the respective weak limits are actually independent of the chosen subsequence. This is done as follows: thanks to estimate~\eqref{eq:dst-cont}, we get for any $n,m$ and any $\varphi\in C^2(\crc)$ the bound
	\[
	  \abs*{\int_\crc \varphi \dd (\mu^n - \mu^m)} \leq \norm{\varphi}_{C^2(\crc)} \dst(\mu^n,\mu^m)^{1/2} . 
	\]
	We conclude that also $\int_\crc \varphi \dd (\mu^n - \mu^m)\to 0$ as $n,m\to \infty$ for any $\varphi \in C(\crc)$ by a density argument: take $\norm{\varphi^\eps - \varphi}_{C(\crc)}\leq \eps$ such that $\varphi^\eps\in C^2(\crc)$. Then, for any $\eps>0$ there is $N=N(\eps,\varphi)$ such that for all $n,m\geq N$  
	\begin{align*}
		\abs*{\int_\crc \varphi \dd (\mu^n - \mu^m)} &\leq 2 \eps + \abs*{\int_\crc \varphi^\eps \dd (\mu^n - \mu^m)} \leq 2\eps + \norm{\varphi^\eps}_{C^2(\crc)} \dst(\mu^n,\mu^m)^{1/2} \leq 3\eps .
	\end{align*}
	This shows the desired uniqueness of the limit $\mu$. Hence $\dst$ is complete in $\Prob(\crc)$. The argument further shows that $\dst$ metrizes the weak convergence of measures.

    The upper bound in \eqref{eq:D:comp} is a consequence of~\eqref{eq:comp:H-2} in Lemma~\ref{lem:H-2}. For the lower bound, we take any $\varphi \in \dot W^{2,\infty}(\crc)$ and estimate in analogy to \eqref{eq:dst-cont} above:
    \[
      \int_{\crc} \varphi \dd (\mu^0-\mu^1) = \int_0^1 \int_{\crc} \partial_{xx} \varphi \dd \moment^s \leq \norm{\varphi}_{\dot W^{2,\infty}(\crc)}
      \action\bigl[(\mu^s,\moment^s)_{s\in[0,1]}\bigr]^{\frac{1}{2}} .
    \]
    The desired estimate follows by taking the $\sup$ in $\varphi$ with norm bounded by $1$ and infimizing among $(\mu^s,\moment^s)_{s\in[0,1]}$.
\end{proof}
\begin{lemma}\label{lem:AubinLions}
	 If $\rho\in \AC([0,T],(\Prob(\crc),\dst))$ with $C_\fish:=\sup_{t\in [0,T]} \fish(\rho_t)<\infty$, then already $\rho\Leb\in \AC([0,T],(\Prob(\crc),\hell))$, and 
		\begin{equation*}%
		      \hell(\rho^s,\rho^t) \leq C \dst(\rho^s,\rho^t)^{\frac{1}{12}} \qquad\text{for all $0\leq s \leq t \leq T$},
		\end{equation*}
        with a modulus $C$ of continuity that is expressible in terms of $C_\fish$ alone.
\end{lemma}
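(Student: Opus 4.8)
The plan is to interpolate a uniform‑in‑$t$ regularity bound furnished by the Fisher information against a weak bound on $t\mapsto\rho_t$ furnished by $\dst$‑absolute continuity, and then to transfer the resulting estimate from $\rho$ to $\sqrt\rho$ by an elementary pointwise inequality together with a one‑dimensional Gagliardo--Nirenberg estimate; all constants below depend on $C_\fish$ alone (recall $\abs\crc=1$). \emph{Step 1 (Fisher bounds).} From $\fish(\rho_t)\le C_\fish$ and $\int_\crc\rho_t\dd x=1$ we get $\norm{\sqrt{\rho_t}}_{H^1(\crc)}^2\le 1+\tfrac12 C_\fish$; since $H^1(\crc)\hookrightarrow L^\infty(\crc)$ in one dimension this bounds $\norm{\rho_t}_{L^\infty(\crc)}$, and then $\px\rho_t=2\sqrt{\rho_t}\,\px\sqrt{\rho_t}$ gives $\norm{\rho_t}_{H^1(\crc)}\le C_1$, all uniformly in $t\in[0,T]$. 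In particular $\norm{\rho_t-\rho_s}_{\dot H^1(\crc)}\le 2C_1$, and $\rho_t-\rho_s$ has zero average.

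\emph{Step 2 (weak bound from $\dst$).} The lower bound in \eqref{eq:D:comp} yields $\norm{\rho_s-\rho_t}_{\dot W^{2,\infty}(\crc)'}\le\dst(\rho_s,\rho_t)$. Moreover $\dot H^3(\crc)\hookrightarrow\dot W^{2,\infty}(\crc)$ with embedding constant $\le1$: indeed, for $f\in\dot H^3(\crc)$ the function $\pxx f$ is continuous with zero mean, hence vanishes somewhere, so $\norm{\pxx f}_{L^\infty}\le\norm{\partial_{xxx} f}_{L^1}\le\norm{\partial_{xxx} f}_{L^2}=\norm{f}_{\dot H^3(\crc)}$. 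Dualizing, $\norm{\rho_s-\rho_t}_{\dot H^{-3}(\crc)}\le\norm{\rho_s-\rho_t}_{\dot W^{2,\infty}(\crc)'}\le\dst(\rho_s,\rho_t)$. \emph{Step 3 (interpolation).} For any mean‑zero $h$, a one‑line Fourier/Hölder estimate gives $\norm{h}_{L^2(\crc)}\le\norm{h}_{\dot H^1(\crc)}^{3/4}\norm{h}_{\dot H^{-3}(\crc)}^{1/4}$; with $h=\rho_t-\rho_s$ and Steps~1--2 this produces $\norm{\rho_t-\rho_s}_{L^2(\crc)}\le(2C_1)^{3/4}\dst(\rho_s,\rho_t)^{1/4}$.

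\emph{Step 4 (passage to $\sqrt\rho$).} By $(\sqrt a-\sqrt b)^2\le\abs{a-b}$ and Cauchy--Schwarz on $\crc$, $\norm{\sqrt{\rho_t}-\sqrt{\rho_s}}_{L^1(\crc)}\le\norm{\rho_t-\rho_s}_{L^1(\crc)}^{1/2}\le\norm{\rho_t-\rho_s}_{L^2(\crc)}^{1/2}$, while by Step~1 the family $\{\sqrt{\rho_t}\}$ is bounded in $H^1(\crc)$. Applying the one‑dimensional Gagliardo--Nirenberg inequality $\norm{u}_{L^2}\le C\norm{u}_{L^1}^{2/3}\norm{u}_{H^1}^{1/3}$ to $u=\sqrt{\rho_t}-\sqrt{\rho_s}$ and inserting Step~3,
\[
  \hell(\rho_s,\rho_t)=\norm{\sqrt{\rho_t}-\sqrt{\rho_s}}_{L^2(\crc)}\le C\,\norm{\rho_t-\rho_s}_{L^2(\crc)}^{1/3}\le C'\,\dst(\rho_s,\rho_t)^{1/12},
\]
with $C'=C'(C_\fish)$, which is the asserted Hölder bound. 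Since $\rho\in\AC([0,T],(\Prob(\crc),\dst))$, the right‑hand side vanishes as $\abs{t-s}\to0$, and the membership $\rho\Leb\in\AC([0,T],(\Prob(\crc),\hell))$ follows.

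The hard part will be the interpolation in Steps~2--4: the only control of $t\mapsto\rho_t$ coming from $\dst$ is the very weak negative‑order bound of Step~2 --- the upper bound in \eqref{eq:D:comp} is unusable here --- while the Fisher information buys only $H^1$‑regularity, and that at the level of $\sqrt\rho$ rather than $\rho$; bridging these through the non‑Lipschitz map $\rho\mapsto\sqrt\rho$ is exactly what forces the additional Gagliardo--Nirenberg step and the small exponent $\tfrac1{12}$. One still has to check, routinely, that the $\dot H^3\hookrightarrow\dot W^{2,\infty}$ duality and the interpolation constants are quantitative in $C_\fish$ alone.
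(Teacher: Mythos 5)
Your proof is correct and arrives at the same exponent $\tfrac1{12}$, but by a genuinely different route than the paper. The paper never upgrades the Fisher bound to an $H^1$ bound on $\rho_t$ itself: it starts from the pointwise inequality $\abs{\sqrt a-\sqrt b}^3\le(a-b)(\sqrt a-\sqrt b)$ and Jensen to get $\hell(\rho_s,\rho_t)^2\le\norm{\sqrt{\rho_s}-\sqrt{\rho_t}}_{\dot H^1}^{2/3}\norm{\rho_s-\rho_t}_{\dot H^{-1}}^{2/3}$, and then proves the key estimate $\norm{\nu}_{\dot H^{-1}}\le B\norm{\nu}_{\dot W^{2,\infty}(\crc)'}^{1/4}$ by an explicit mollification with van Mises kernels. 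You instead (i) use $H^1(\crc)\hookrightarrow L^\infty(\crc)$ and the chain rule to get a uniform $\dot H^1$ bound on $\rho_t$, (ii) dualize the embedding $\dot H^3\hookrightarrow\dot W^{2,\infty}$ to reach $\dot H^{-3}$, (iii) interpolate $\dot H^1$ against $\dot H^{-3}$ in Fourier to control $\norm{\rho_t-\rho_s}_{L^2}$, and (iv) return to $\sqrt\rho$ via $(\sqrt a-\sqrt b)^2\le\abs{a-b}$ and Gagliardo--Nirenberg; all steps check out (the exponents $\tfrac14\cdot\tfrac13=\tfrac1{12}$ match the paper's $\tfrac14\cdot\tfrac13$ arranged differently) and every constant depends on $C_\fish$ only. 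What your route buys is that it uses only textbook interpolation and no kernel computations; what the paper's route buys is that the mollification argument and the inequality $\eqref{eq:HellHoelder}$ transfer essentially verbatim to the discretized torus in Lemma~\ref{lem:AubinLions:discrete}, where your Fourier interpolation and the Sobolev embeddings would each need separate discrete counterparts with $\delta$-uniform constants, and it avoids assuming any $L^\infty$ control on $\rho_t$. One shared caveat: the final sentence of your argument, like the paper's proof, only establishes the H\"older modulus; a H\"older bound in terms of $\dst(\rho^s,\rho^t)$ does not by itself yield absolute continuity with respect to $\hell$, so the ``$\AC$'' part of the conclusion is not literally proved by either argument.
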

\begin{proof}
    Observe that, for any numbers $a,b\ge0$,
    \begin{align}
        \label{eq:elementary}
        \big|\sqrt a-\sqrt b\big|^3 
        = \big|\sqrt a-\sqrt b\big| \big(\sqrt a-\sqrt b\big)^2 
        \le \big(\sqrt a+\sqrt b\big)\big(\sqrt a-\sqrt b\big)\big(\sqrt a-\sqrt b\big)
        = (a-b)\big(\sqrt a-\sqrt b\big).
    \end{align}
	We apply this estimate to $a=\rho_s(x)$ and $b=\rho_t(x)$, combined with Jensen's inequality and $\dot H^1$-$\dot H^{-1}$-interpolation: 
     \begin{equation}\label{eq:HellHoelder}
     \begin{split}         
        \hell(\rho_s,\rho_t)^2 
        &\leq \pra*{ \int_\crc \big| \sqrt{\rho_s} - \sqrt{\rho_t}\big|^3 \dd x }^{\frac{2}{3}} 
         = \pra*{\int_\crc (\rho_s-\rho_t)\big(\sqrt{\rho_s}-\sqrt{\rho_t}\big)}^{\frac23}\\
        &\le \big\|\sqrt{\rho_s}-\sqrt{\rho_t}\big\|_{\dot H^1}^{\frac23}\,\|\rho_s-\rho_t\|_{\dot H^{-1}}^{\frac23}.
     \end{split}
    \end{equation}
    Since
    \begin{align*}
        \big\|\sqrt{\rho_s}-\sqrt{\rho_t}\big\|_{\dot H^1}^2
        \le \int_{\crc}\big(\partial_x\sqrt{\rho_s}-\partial_x\sqrt{\rho_t}\big)^2\dd x
        \le 2\big(\fish(\rho_s)+\fish(\rho_t)\big) \le 2C_\fish,
    \end{align*}
    it is sufficient to bound the distance of $\rho_s$ to $\rho_t$ in $\dot H^{-1}(\crc)$ by a certain power of their distance in~$\dst$. In fact, by the comparison estimate~\eqref{eq:D:comp}, it suffices to show that there is a constant $B$ such that
     \begin{equation}\label{eq:est:H-1:W2'}
      \change{ \norm{ \mu^0-\mu^1}_{\dot H^{-1}(\crc)} \leq B\norm{ \mu^0-\mu^1}_{\dot W^{2,\infty}(\crc)'}^{\frac{1}{4}}} \qquad \change{\text{for any $\mu^0,\mu^1\in \Prob(\crc)$}} \,.
     \end{equation}
     To show~\eqref{eq:est:H-1:W2'}, \change{let for brevity $\nu= \mu^0-\mu^1$} and consider for $\varphi\in \dot H^1(\crc)$ a regularization $\varphi^\eps \in \dot W^{2,\infty}(\crc)$. Then, \change{the triangle and duality inequality give}
     \begin{align}\label{eq:H1:W2infty}
         \abs*{\int_\crc \varphi \dd \nu } \leq \change{ \int_\crc \abs*{\varphi-\varphi_\eps} \dd \abs{\nu} + \abs*{\int_\crc \varphi_\eps \dx{\nu}} } \leq 2\norm{\varphi-\varphi_\eps}_{L^\infty(\crc)} +  \norm{\varphi_\eps}_{\dot W^{2,\infty}(\crc)}\norm{\nu}_{\dot W^{2,\infty}(\crc)'} \,,
     \end{align}
     \change{where we use that $\int \dd \abs{\nu}\leq 2$ as a difference of two probability measures.}
     We assume a regularization of the form $\varphi_\eps = K_\eps \ast \varphi$ with non-negative kernels $K_\eps \in C^\infty(\crc,\Rnn)$ specified below. To estimate the first term on the right-hand side of \eqref{eq:H1:W2infty}, observe that for any $x\in \crc$,
     \begin{align*}
     	\varphi(x)- \bra*{K_\eps \ast \varphi}(x) &= \int_\crc K_\eps(x-y) \bra*{ \varphi(x)-\varphi(y)} \dd y \leq \int_\crc K_\eps(x-y) \int_x^y \partial_z \varphi(z)\dd z \dd y \\
     	&\leq \norm{\varphi}_{\dot H^1(\crc)} \int_\crc K_\eps(x-y) \abs{x-y}_{\crc}^{\frac{1}{2}} \dd y .
     \end{align*}
     For regularization, we use the Van-Mises distribution $K_\eps(x) = c_\eps^{-1} \exp(\eps^{-1} \cos(2\pi x))$, where $c_\eps = \sqrt{2\pi \eps}+ O(\eps)$ for $\eps \ll 1$. A simple scaling argument provides the estimate
     	\begin{align*}
     		 \int_\crc K_\eps(x-y) \abs{x-y}_{\crc}^{\frac{1}{2}} \dd y \le B' \eps^{\frac{1}{4}} ,
     	\end{align*}
     with a constant $B'$ independent of $\eps\in (0,1)$.
     Likewise, we can estimate
     \begin{align*}
     	\norm{\varphi_\eps}_{\dot W^{2,\infty}(\crc)} = \sup_{x\in \crc} \abs*{\int_\crc \partial_x K_\eps(x-y)\partial_y\varphi(y)\dd y} \leq \norm{K_\eps}_{\dot H^1(\crc)}\norm{\varphi}_{\dot H^1(\crc)} 
     \end{align*}
     and another scaling argument shows that
     \begin{align*}
     	 \norm{K_\eps}_{\dot H^1(\crc)} \le B'' \eps^{-\frac{3}{4}} .
     \end{align*}
     In total, we conclude from~\eqref{eq:H1:W2infty} the estimate
     \begin{equation*}
     	\abs[\bigg]{\int_\crc \varphi \dd \nu } \le B''' \bra*{\eps^{\frac{1}{4}} + \eps^{-\frac{3}{4}}\norm{\nu}_{\dot W^{2,\infty}(\crc)'}} \norm{\varphi}_{\dot H^1(\crc)} .
     \end{equation*}
     Choosing $\eps=\norm{\nu}_{\dot W^{2,\infty}(\crc)'}$ and taking the supremum in $\varphi$ with $\norm{\varphi}_{\dot H^1(\crc)}\leq 1$ yields the desired bound~\eqref{eq:est:H-1:W2'}.
\end{proof}

\subsection{Gradient flows with respect to the diffusive transport distance}
For the identification of \eqref{eq:DLSS} as gradient flow of $\ent$ in the novel metric $\dst$ as stated in Result~\ref{Result:DiffTrans}, we proceed formally. A fully rigorous analysis, including an appropriate definition of $\ent$'s metric subdifferential, is outside of the scope of this paper.

We adopt the language of “Otto calculus”: consider the manifold $\mf$ of smooth and strictly positive probability densities $\rho:\crc\to\Rp$. 
The tangent space $\tg_\rho\mf$ at any $\rho\in\mf$ is identified with the smooth functions $\varphi:\crc\to\R$ of zero average; 
the infinitesimal motion on $\mf$ induced by $\varphi$ is $\dot\rho=\partial_{xx}(\rho\,\partial_{xx}\varphi)$. Note that by the assumed positivity of $\rho$, any smooth infinitesimal mass-preserving change $\dot\rho:\crc\to\R$ of $\rho$ is uniquely related to a $\varphi\in\tg_\rho\mf$. The manifold $\mf$ is now endowed with a Riemannian structure by introducing a scalar product on each $\tg_\rho\mf$:
\begin{align*}
    (\varphi,\psi)_\rho := \int_\crc \rho\,\partial_{xx}\varphi\,\partial_{xx}\psi\dd x.
\end{align*}
The relation of this Riemannian structure to the metric $\dst$ manifests in the following relation for the length of $\varphi\in\tg_\rho\mf$: 
\begin{align*}
    \|\varphi\|_\rho^2 
    := (\varphi,\varphi)_\rho
    = \int_\crc \rho\,\bigl(\partial_{xx}\varphi\bigr)^2\dd x 
    = \act\bigl(\rho\Leb,\partial_{xx}\varphi\,\rho\Leb\bigr).
\end{align*}
By means of Riesz's representation theorem, the Riemannian structure allows to identify any element~$\xi$ in the cotangent space $\tg^*_\rho\mf$ at $\rho$ with an $v\in\tg_\rho\mf$ such that $\xi=(v,\cdot)_\rho$. In the context at hand, the corresponding linear map $\ons(\rho):\tg^*_\rho\mf\to\tg_\rho\mf$ is called Onsager operator, see e.g.~\cite{LieroMielke}. Here, that operator can be made explicit: the sought $\ons(\rho)\xi\in\tg_\rho\mf$ is the unique function $\psi:\crc\to\R$ of vanishing average such that
\begin{align}
    \label{eq:variational}
    \xi[\varphi] = \int_\crc \psi\,\varphi\dd x \quad \text{for all $\varphi\in\tg_\rho\mf$.}
\end{align}
By definition, the differential $\ddd\fnc(\rho)$ of a given regular functional $\fnc:\mf\to\R$ at some $\rho\in\mf$ lies in the respective cotangent space $\tg^*_\rho\mf$, and the gradient $\grd\fnc(\rho)\in\tg_\rho\mf$ is its Riesz dual with respect to the Riemannian structure. Thus, $\grd\fnc(\rho)=\ons(\rho)\ddd\fnc$, and this is $\fnc$'s variational derivative with respect to $L^2(\crc)$ in accordance to \eqref{eq:variational}; note that the variational derivative can always be chosen to be of zero average.

In summary, the metric gradient $\grd\fnc(\rho)$ of the functional $\fnc$ at $\rho$ is its variational derivative, and the induced infinitesimal motion on $\mf$ by $-\grd\fnc(\rho)$ amounts to 
\begin{align*}
    \partial_t\rho = -\partial_{xx}\big(\rho\,\partial_{xx}\grd\fnc(\rho)\big).
\end{align*}
Solutions to this evolution equation define $\fnc$'s metric gradient flow with respect to $\dst$ on $\mf$. Particularly for $\fnc:=\ent$ from~\eqref{eq:def:ent-fish} with variational derivative $\grd\ent(\rho)=\log\rho$, that gradient flow becomes the DLSS equation~\eqref{eq:DLSS}.

\section{Diffusive transport on the discrete torus}
The goal of this section is to introduce an analogue of the novel distance $\dst$ on the space of probability densities that are piecewise constant with respect to a given mesh on $\crc$.

\subsection{Discretization and notation}\label{ssec:discrete:notation}
We consider an equidistant discretization on the torus with $N \in \mathbb{N}$ intervals of length $\delta = 1/N$. The discrete intervals are labelled by $\kappa \in [N]:=\Z/N\Z\cong\{1,2,\ldots,N\}$ with \change{$\interval_\kappa:=\bigl((\kappa-1/2)\delta,(\kappa+1/2)\delta\bigr)\subset \crc = \R/\Z$ being the $\kappa$th interval on the circle $\crc$ typically identified with $[0,1)$ with cyclic boundary}; here points $x\in\crc$ and indices $\kappa\in[N]$ are always considered as cyclic. Functions $f:[N]\to\R$ are interpreted as piecewise constant, attaining the constant value $f_\kappa:=f(\kappa)$ on $\interval_\kappa$. The integral of $f:[N]\to\R$ is thus
\begin{align*}
	\delta\sum\nolimits_\kappa f_\kappa := \delta\sum_{\kappa\in[N]} f_\kappa 
\end{align*}
and we use the discrete scalar product adapted to the discretization
\begin{equation}\label{eq:delta:scalp}
	\skp{f,g}_\delta := \delta\sum\nolimits_{\kappa} f_\kappa g_\kappa . 
\end{equation}
We write $f_+,f_-:[N]\to\R$ for the left/right translates of $f:[N]\to\R$, i.e., $\bigl(f_\pm\bigr)_\kappa := f_{\kappa\pm1}$ for all $\kappa\in[N]$. The forward/backward difference quotient operators and the discrete Laplacian are defined in the usual way,
\begin{align*}
	\change{\partial^\delta_+ f := \frac{f_+-f}\delta,\qquad\partial^\delta_- f := \frac{f-f_-}\delta } \qquad\text{and}\qquad
	\ddff f := \frac{f_++f_--2f}{\delta^2}.
\end{align*}
Note that $\ddff = \partial^\delta_+\partial^\delta_-= \partial^\delta_-\partial^\delta_+$. For later reference, note further that $\ddff$ is a symmetric linear map on the space of functions $f:[N]\to\R$ with respect to the obvious scalar product, i.e.,
\begin{align}
    \label{eq:ddffsymmetric}
    \skp{\ddff f,g}_\delta = \delta\sum\nolimits_{\kappa}\ddff_\kappa f g_\kappa 
    = \delta\sum\nolimits_{\kappa}f_\kappa \ddff_\kappa g
    = \skp{f,\ddff g}_\delta .    
\end{align}
Moreover, $\ddff$ kernel are the constant function, and $\ddff$ leaves the subspace of functions of zero average invariant. The corresponding restriction is invertible, and by abuse of notation, we shall denote the inverse by $(\ddff)^{-1}$ .

Next, we introduce the set of probability densities on the discrete torus by
\begin{align*}
	\dprb:=\biggl\{ \rho: [N] \rightarrow \mathbb{R}\,\bigg|\,
	\rho_\kappa\ge0\ \text{for all $\kappa\in[N]$},\  \delta\sum\nolimits_{\kappa}\rho_\kappa = 1 \biggr\},
\end{align*}
as well as the subset of positive probability densities,
\begin{align*}
    \dprbplus := \bigl\{ \rho\in\dprb\,\big|\,\rho_\kappa>0\ \text{for all $\kappa\in[N]$}\bigr\}.
\end{align*}
On $\dprb$, we introduce the discretized Hellinger distance for $\rho,\eta\in\dprb$ by
\begin{align}
    \label{eq:def:dhell}
    \dhell(\rho,\eta) := \biggl(\delta\sum\nolimits_\kappa\abs[\big]{\sqrt{\rho_\kappa}-\sqrt{\eta_\kappa}}^2\biggr)^{\!\frac12}.
\end{align}
For later reference, we define the variational derivative of a functional $\mathcal E:\dprb\to\R$ at $\rho^*\in\dprb$ as the dual function $\mathcal E'(\rho^*)$ with respect to the product~\eqref{eq:delta:scalp}; more explicitly,
\begin{align*}
    \big(\mathcal E'(\rho)\big)_\kappa := \frac{\partial \mathcal E(\rho)}{\partial \rho_\kappa}\bigg|_{\rho=\rho^*}.
\end{align*}
In particular, for the discrete entropy functional 
\begin{equation}\label{eqdef:dentropy}
	\dent(\rho) := \delta \sum\nolimits_\kappa \bigl(\rho_\kappa (\log \rho_\kappa - 1)+1\bigr),
\end{equation}
we obtain
\begin{equation}\label{eq:derviv:dentropy}
    \Bigl(\bigl(\dent\bigr)'(\rho^*)\Bigr)_\kappa = \log\rho^*_\kappa.
\end{equation}
Finally, we introduce discrete analogues of the $L^p$- and $H^1$-norms: for $p\in [1,\infty)$, let
\begin{equation}\label{def:LN}
	\dnorm{L^p_N}{v} := \biggl(\delta \sum\nolimits_\kappa v_\kappa^p \biggr)^{\!1/p}\qquad\text{and}\qquad \dnorm{H^1_N}{v} := \biggl( \dnorm{L^2_N}{v}^2 + \delta \sum\nolimits_\kappa \bigl(\partial_+^\delta v)^2 \biggr)^{\!1/2} . 
\end{equation}
We shall further need a discretized \emph{homogeneous $\dot W^{2,\infty}$-norm}, 
\begin{align*}
   \norm{\varphi}_{\dot W^{2,\infty}_N} := \sup_{\kappa\in [N]} \abs*{\ddff_\kappa \varphi} ,
\end{align*}
\change{which thanks to the discrete Poincaré inequality from Proposition~\ref{prop:PI-LSI} below is indeed a norm on all functions $\varphi:[N]\to \R$ with zero average.}
Moreover, we define the dual norm for $f:[N]\to \R$ with average zero by
\begin{align}
    \label{eq:def:dW2infty}
    \norm{f}_{(\dot W^{2,\infty}_N)'} := \sup_{\varphi} \set[\bigg]{ \delta \sum\nolimits_\kappa \varphi_\kappa f_\kappa: \norm{\varphi}_{\dot W^{2,\infty}_N}\leq 1}.
\end{align}

\subsection{Discrete optimal diffusive connection}
This subsection and the next parallel Section \ref{sec:DiffTrans:continuous} in the discrete setting. Several of the technical details simplify thanks to finiteness of the base space.

First, as a discretized analogue of $\connect(\mu^0,\mu^1)$, consider the space $\curves$ of pairs $(\rho^s,\welo^s)_{s\in[0,1]}$ of curves $\rho:[0,1]\to\dprb$ and $\welo:[0,1]\change{\times [N]} \to\R$, where $\rho_\kappa\in H^1([0,1])$ and $\welo_\kappa\in L^2([0,1])$ for each $\kappa\in[N]$, which are subject to the continuity equation
\begin{align}
	\label{eq:dcont999}
	\dot\rho_\kappa^s = \ddff_\kappa\welo^s\quad \text{for all $\kappa\in[N]$ and at almost every $s\in[0,1]$.}
\end{align}
We define a notion of convergence for the set of curves $\curves$.
\begin{definition}[Weak convergence in $\curves$]%
	We say that a sequence $(\rho_n,\welo_n)\in\curves$ converges weakly in $\curves$ towards a limit $(\rho_*,\welo_*)\in\curves$ if $(\rho_n)_\kappa\rightharpoonup(\rho_*)_\kappa$ weakly in $H^1([0,1])$ and $(\welo_n)_\kappa\rightharpoonup(\welo_*)_\kappa$ weakly in $L^2([0,1])$ for each $\kappa\in[N]$.
\end{definition}
Note that $\rho^s_*\in\dprb$, and in particular the conservation of total mass, is part of the definition. Inheritance of the continuity equation \eqref{eq:dcont999} by the limit is automatic. Further note that, thanks to the compact embedding $H^1([0,1])\hookrightarrow C([0,1])$, the curves $\rho_\kappa$ are actually continuous functions, and weak convergence of $(\rho_n,\welo_n)$ to $(\rho_*,\welo_*)$ in $\curves$ implies uniform convergence of $(\rho_n)_\kappa$ to $(\rho_*)_\kappa$, for each $\kappa\in[N]$.

We turn to the discretization of the action functional $\action$. In \eqref{eq:reducedactionfunction}, the pair $(\rho,w)$ is evaluated at the same point; in the discrete setting, where the continuity equation \eqref{eq:dcont999} has a three point stencil, it is natural to consider three point averages of the density $\rho$. 
\begin{definition}[Admissible mobility]\label{def:adm:mob}
A non-negative function $\m: \Rnn^3  \rightarrow [0,+\infty)$ is an \emph{admissible mobility} provided it satisfies the following properties:
\begin{enumerate}
	\item $\m$ is continuous;
    \item $\m$ is concave;
	\item $\m(b,a,c) = \m(b,c,a)$ for $a,b,c \geq 0$;
	\item $\min\{ a,b,c \} \leq \m(b,a,c) \leq \max\{a,b,c\}$ for $a,b,c \geq 0$.
\end{enumerate}
\end{definition}
Moreover, for $\rho\in\dprb$, we abbreviate $\m_\kappa(\rho):=\m( \rho_\kappa, \rho_{\kappa-1}, \rho_{\kappa+1})$. Now, for a pair $(\rho^s,\welo^s)_{s\in[0,1]}\in\curves$, define its action by
\begin{align}
	\label{eq:defac2t}
	\daction\big[(\rho^s,\welo^s)_{s\in[0,1]}\big] := \delta\sum_{\kappa\in\cto}\int_0^1\frc{(\welo_\kappa^s)}{\m_\kappa(\rho^s)} \dd s,
\end{align}
where the square bracket is the quotient's convex and lower semi-continuous relaxation,
\begin{align*}%
	\frc{p}{r} := 
	\begin{cases}
		\frac{p^2}r & \text{if $r>0$}, \\
		0 & \text{if $r=0$ and also $p=0$}, \\
		+\infty & \text{if $r=0$ but $p\neq0$}.
	\end{cases}
\end{align*}
%
%
\begin{lemma}[Lower semi-continuity of the action]\label{lem:lsc}
	The action functional associated to an admissible mobility $\m$ is lower semi-continuous with respect to weak convergence in $\curves$.
\end{lemma}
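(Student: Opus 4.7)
The plan is to reduce the claim to the classical strong--weak lower semi-continuity of the convex integrand $[p^2/r]$, after separating the arguments of $\daction$ into a strongly and a weakly convergent part. Since $\daction = \delta\sum_\kappa \Phi_\kappa$ with $\Phi_\kappa(\rho,\welo):=\int_0^1\frc{(\welo_\kappa^s)}{\m_\kappa(\rho^s)}\dd s$, it suffices to prove that each $\Phi_\kappa$ is lower semi-continuous on its own. Let $(\rho_n,\welo_n)\rightharpoonup(\rho_*,\welo_*)$ weakly in $\curves$. By the compact embedding $H^1([0,1])\hookrightarrow C([0,1])$, every component $(\rho_n)_\lambda$ converges \emph{uniformly} on $[0,1]$ to $(\rho_*)_\lambda$; combined with the continuity of $\m$ from Definition~\ref{def:adm:mob}, this yields uniform convergence $\m_\kappa(\rho_n^\cdot)\to\m_\kappa(\rho_*^\cdot)$ on $[0,1]$. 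The $\welo$-components, by contrast, converge only weakly in $L^2([0,1])$, so the setting is the standard strong--weak one.

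The central tool I shall use is the Legendre representation
\[
\frc{p}{r} = \sup_{q\in\R}\bra*{2pq - rq^2},\qquad r\ge 0,\ p\in\R,
\]
consistent with the conventions at $r=0$. For any bounded measurable $q:[0,1]\to\R$,
\[
\Phi_\kappa(\rho_n,\welo_n) \ge \int_0^1\bra*{2\welo_{n,\kappa}^s\, q(s) - \m_\kappa(\rho_n^s)\, q(s)^2}\dd s,
\]
and as $n\to\infty$ the right-hand side converges to $\int_0^1\bra*{2\welo_{*,\kappa}^s\,q(s) - \m_\kappa(\rho_*^s)\,q(s)^2}\dd s$: the first summand by weak $L^2$-convergence of $\welo_{n,\kappa}$ tested against $q\in L^2$, the second by the uniform convergence of $\m_\kappa(\rho_n^\cdot)$ tested against the bounded function $q^2$. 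Passing to $\liminf_n$ and then to the supremum over bounded measurable $q$ will conclude the proof.

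The only non-routine step, and the one I expect to be the main obstacle, is the integrated version of the Legendre identity:
\[
\sup_{q\in L^\infty([0,1])} \int_0^1\bra*{2\welo_{*,\kappa}^s\, q(s) - \m_\kappa(\rho_*^s)\, q(s)^2}\dd s = \int_0^1\frc{(\welo_{*,\kappa}^s)}{\m_\kappa(\rho_*^s)}\dd s.
\]
I intend to verify it by inserting a truncated pointwise optimizer: set $q^*(s):=\welo_{*,\kappa}^s/\m_\kappa(\rho_*^s)$ on $\{\m_\kappa(\rho_*^\cdot)>0\}$ and zero elsewhere, and let $q^*_M:=\max(-M,\min(M,q^*))\in L^\infty$. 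On $\{|q^*|\le M\}$ the integrand coincides with $\frc{(\welo_{*,\kappa}^s)}{\m_\kappa(\rho_*^s)}$; on $\{|q^*|>M\}$ it is nonnegative and dominated by the same expression, using that $|\welo_{*,\kappa}^s|>M\m_\kappa(\rho_*^s)$ there. Monotone convergence on the first region, combined with dominated convergence on the second, delivers the identity in both the finite and infinite cases. The one subtle subcase is where $\m_\kappa(\rho_*^s)=0$ on a set of positive measure with $\welo_{*,\kappa}^s\ne 0$: there the right-hand side is $+\infty$ and I make the left side diverge as well by adding to $q$ a large multiple of $\mathrm{sgn}(\welo_{*,\kappa}^s)\mathbf{1}_{\{\m_\kappa(\rho_*)=0\}}(s)$ and letting the amplitude tend to infinity.
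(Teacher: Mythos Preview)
Your proof is correct, but it follows a genuinely different route from the paper's. The paper argues via the \emph{joint convexity} of $(r_-,r_0,r_+,p)\mapsto\frc{p}{\m(r_0,r_-,r_+)}$, which holds because $\m$ is concave, and then invokes a general weak--weak lower semi-continuity result for convex integral functionals (citing Ambrosio--Buttazzo). By contrast, you exploit the compact embedding $H^1([0,1])\hookrightarrow C([0,1])$ to upgrade the $\rho$-convergence to uniform convergence, then treat $r^s:=\m_\kappa(\rho^s)$ as a \emph{strongly} convergent scalar coefficient and apply the Legendre representation $\frc{p}{r}=\sup_q(2pq-rq^2)$ to handle the weakly convergent $\welo$-part by hand.

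What each approach buys: the paper's argument is shorter and conceptually cleaner, but it genuinely needs the concavity hypothesis on $\m$ and an external reference. Your argument is fully self-contained and, notably, does not use concavity of $\m$ at all---only its continuity (uniform continuity on the compact range $[0,1/\delta]^3$ being enough to transfer uniform convergence of $\rho_n$ to uniform convergence of $\m_\kappa(\rho_n)$). This works precisely because the base space is discrete and finite, so that Rellich gives strong convergence of the density for free; the paper's route would survive in settings where only weak convergence of $\rho$ is available. Your handling of the integrated Legendre identity, including the truncation $q_M^*$ and the separate treatment of the set $\{\m_\kappa(\rho_*)=0,\ \welo_{*,\kappa}\neq 0\}$, is sound.
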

\begin{proof}
	Assume that $(\rho_n,\welo_n)$ converges to $(\rho_*,\welo_*)$ weakly in $\curves$. Then, by definition, we have (more than) weak-$\ast$-convergence in $L^1([0,1])$ of $(\rho_n)_\kappa$ to $(\rho_*)_\kappa$ and of $(\welo_n)_\kappa$ to $(\welo_*)_\kappa$, respectively, for each $\kappa\in[N]$. Since $\m$ is concave, the map
	\begin{equation*}
		(r_-,r_0,r_+,p)\mapsto\frc{p}{\m(r_0,r_-,r_+)}
	\end{equation*}
    is convex, and we conclude, see e.g. \cite{AmbrosioButtazzo88}, that
	\begin{align*}
		\int_0^1\frc{(\welo^s_*)_\kappa}{\m_\kappa(\rho_*^s)}\dd s \le \liminf_n \int_0^1 \frc{(\welo_n^s)_\kappa}{\m_\kappa(\rho_n^s)}\dd s .
	\end{align*}
	And since the \change{limit} inferior of a sum is an upper bound on the sum of the individual limits, this yields the desired estimate
	\begin{equation*}
		\daction\big[(\rho^s_*,\welo^s_*)_{s\in[0,1]}\big]
		\le \delta\sum\nolimits_\kappa \liminf_n \int_0^1\frc{(\welo_n^s)_\kappa}{\m_\kappa(\rho^s_n)}\dd s
		\le \liminf_n \daction\big[(\rho^s_n,\welo^s_n)_{s\in[0,1]}\big]. \qedhere
	\end{equation*}
\end{proof}
For given $\hat\rho,\check\rho\in\dprb$, we denote by $\curves(\hat\rho,\check\rho)$ the subset of $\curves$ consisting of $(\rho^s,\welo^s)_{s\in[0,1]}$ with $\rho^0=\hat\rho$, $\rho^1=\check\rho$. 
\begin{lemma}%
	For an admissible mobility $\m$, every $\rho^0,\rho^1\in\dprb$ can be connected by a locally Lipschitz curve of finite action $(\rho^s)_{s\in[0,1]} \in \curves(\rho^0, \rho^1)$.
 \end{lemma}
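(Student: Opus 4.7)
The plan is to connect $\rho^0$ and $\rho^1$ via the uniform density $\bar\rho\equiv1\in\dprbplus$, exploiting the fact that any convex combination of a discrete density with $\bar\rho$ inherits a strictly positive lower bound, which, through property (iv) of Definition~\ref{def:adm:mob}, controls the mobility from below in the action functional.

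Concretely, since $\bar\rho-\rho^0$ and $\rho^1-\bar\rho$ have vanishing average, the invertibility of $\ddff$ on mean-zero functions recorded after~\eqref{eq:ddffsymmetric} yields mean-zero $V^0,V^1:[N]\to\R$ with $\ddff V^0=\bar\rho-\rho^0$ and $\ddff V^1=\rho^1-\bar\rho$. I would then reparametrize time via $f(s):=4s^2$ on $[0,1/2]$ and $h(s):=4(s-1/2)^2$ on $[1/2,1]$, both taking values in $[0,1]$ with $f(0)=h(1/2)=0$ and $f(1/2)=h(1)=1$, and set
\begin{align*}
\rho^s &:= (1-f(s))\rho^0+f(s)\bar\rho, \quad \welo^s:=\dot f(s)\,V^0, \quad\text{for } s\in[0,1/2], \\
\rho^s &:= (1-h(s))\bar\rho+h(s)\rho^1, \quad \welo^s:=\dot h(s)\,V^1, \quad\text{for } s\in[1/2,1].
\end{align*}
Linearity of $\ddff$ together with the definitions of $V^0,V^1$ shows that the continuity equation~\eqref{eq:dcont999} holds on each subinterval; since $\rho^{1/2}=\bar\rho$ from both sides, $s\mapsto\rho^s$ is continuous on all of $[0,1]$. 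Non-negativity and unit mass of $\rho^s$ follow from the ranges of $f,h\subset[0,1]$ and affinity of the constraints.

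For the action bound, observe that on $[0,1/2]$ every argument of $\m_\kappa(\rho^s)$ is of the form $(1-f(s))\rho^0_\lambda+f(s)\ge f(s)$, so property (iv) of Definition~\ref{def:adm:mob} yields $\m_\kappa(\rho^s)\geq f(s)$. Consequently,
\begin{align*}
\int_0^{1/2}\delta\sum\nolimits_\kappa\frac{(\welo^s_\kappa)^2}{\m_\kappa(\rho^s)}\dd s
\leq \dnorm{L^2_N}{V^0}^2\int_0^{1/2}\frac{(\dot f(s))^2}{f(s)}\dd s = 8\,\dnorm{L^2_N}{V^0}^2,
\end{align*}
since $(\dot f)^2/f\equiv 16$ by the choice of $f$. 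The analogous estimate on $[1/2,1]$ completes the bound $\daction[(\rho,\welo)]\leq 8\bigl(\dnorm{L^2_N}{V^0}^2+\dnorm{L^2_N}{V^1}^2\bigr)<\infty$. Since $s\mapsto\rho^s$ is $C^1$ with bounded derivative on each of the two subintervals and continuous at the junction, it is Lipschitz on $[0,1]$, and since $\welo^s$ is uniformly bounded in $s$ on each subinterval, $\rho_\kappa\in H^1([0,1])$ and $\welo_\kappa\in L^2([0,1])$, so $(\rho,\welo)\in\curves(\rho^0,\rho^1)$.

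The main subtle point is the choice of reparametrization: a naive linear interpolation $f(s)=2s$ would produce an integrand of order $1/s$ at any cell $\kappa$ with $\rho^0_\kappa=0$, which is not integrable near $s=0$; the quadratic reparametrization $f(s)=4s^2$ is precisely tuned so that the potential singularity $1/f(s)$ is cancelled by the factor $(\dot f(s))^2$, yielding a bounded integrand.
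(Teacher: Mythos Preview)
Your approach is essentially the same as the paper's, but there is a genuine error in the second half of the construction. On $[1/2,1]$ you set $h(s)=4(s-1/2)^2$ and $\rho^s=(1-h(s))\bar\rho+h(s)\rho^1$. The potential vanishing of $\rho^1$ occurs at the endpoint $s=1$, so the relevant lower bound is $\m_\kappa(\rho^s)\ge 1-h(s)$, and the action integrand is controlled by $(\dot h(s))^2/(1-h(s))$. With your choice of $h$, one has $(\dot h)^2=16h$, and near $s=1$ the quantity $1-h(s)=1-4(s-1/2)^2$ vanishes \emph{linearly}, so
\[
\int_{1/2}^1\frac{(\dot h(s))^2}{1-h(s)}\,\dd s=\int_{1/2}^1\frac{16h(s)}{1-h(s)}\,\dd s
\]
diverges logarithmically at $s=1$. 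Your ``analogous estimate'' is therefore false as stated: the quadratic reparametrization must be placed at the endpoint where the density can degenerate, namely $s=1$, not at the junction $s=1/2$ where $\rho^s=\bar\rho$ is uniformly positive anyway.

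The fix is immediate: take instead $h(s)=1-4(1-s)^2$, which still satisfies $h(1/2)=0$ and $h(1)=1$, but now $1-h(s)=4(1-s)^2$ and $\dot h(s)=8(1-s)$, so $(\dot h)^2/(1-h)\equiv16$ as on the first half. This is exactly the paper's choice (written there in the symmetric form $\rho^s=(1-4(1-s)^2)\rho^1+4(1-s)^2$). With this correction your argument and the paper's are identical.
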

\begin{proof}
    Since functions in $\dprb$ are automatically bounded, we can employ a simpler construction as in the proof of the analogous result in the continuous setting in Lemma \ref{lem:H-2}. The idea is to connect both $\rho^0$ and $\rho^1$ to the uniform density on the respective time intervals $[0,1/2]$ and $[1/2,1]$. The difficulty to achieve this with finite action lies in the singular growth of the action functional as the minimum of the density approaches zero. Therefore, we choose the following parametrization:
    \begin{align*}
        \rho^s = \begin{cases}
            (1-4s^2)\rho^0+4s^2 & \text{for $0\le s\le\frac12$}, \\
            (1-4(1-s)^2)\rho^1+4(1-s)^2 & \text{for $\frac12\le s\le 1$}.
        \end{cases}
    \end{align*}
    This is clearly a continuous curve in $\dprb$. For definition of an accompanying function $(\welo_s)_{s\in[0,1]}$, recall that $\ddff$ considered as an endomorphism on the space of functions of zero average is invertible, with inverse denoted by $(\ddff)^{-1}$. Since $\rho^0,\rho^1\in\dprb$ have average one, we may define
    \begin{align*}
        W^0 := (\ddff)^{-1}(\rho^0-1), \quad W^1 := (\ddff)^{-1}(\rho^1-1),
    \end{align*}
    and from here
    \begin{align*}
        \welo^s = \begin{cases}
            8sW^0 & \text{for $0<s<\frac12$}, \\
            8(1-s)W^1 & \text{for $\frac12<s<1$}.
        \end{cases}
    \end{align*}
    We then have that
    \begin{align*}
        \dot\rho^s = 8s(1-\rho^0) = \ddff \welo^s
    \end{align*}
    for $0<s<1/2$, and analogously for $1/2<s<1$. The action of the curve $(\rho_s,\welo_s)_{s\in[0,1]}$ is now easily estimated using that $\rho^s\ge4s^2$ and $\rho^s\ge4(1-s)^2$ for $0<s<1/2$ and for $1/2<s<1$, respectively:
    \begin{align*}
        \int_0^{1}\delta\sum\nolimits_\kappa \frac{(\welo^s_\kappa)^2}{\mob_\kappa(\rho^s)} \dd s
        &\le \int_0^{1/2} \delta\sum\nolimits_\kappa\frac{\big(8sW^0_\kappa\big)^2}{4s^2}\dd s +
        \int_{1/2}^1 \delta\sum\nolimits_\kappa\frac{\big(8(1-s)W^1_\kappa\big)^2}{4(1-s)^2}\dd s \\
        &\le 8\delta\sum\nolimits_\kappa \big(W^0_\kappa\big)^2 + 8 \delta\sum\nolimits_\kappa \big(W^1_\kappa\big),
    \end{align*}
    which is clearly finite.
\end{proof}

\subsection{Discrete diffusive transport distance}
We are now in the position to introduce a discrete analogue of the diffusive transport distance $\dst$. In the following, let an admissible mobility $\m$ be fixed.
\begin{proposition}
    \label{prop:metric}
    The \emph{discrete diffusive transport distance} $\ddst_\mob$, defined between given $\rho^0,\rho^1\in\dprb$ by
	\begin{equation}
        \label{eq:defddst}
		\ddst_\mob(\rho^0, \rho^1) := \inf_{(\rho^s, \welo^s)_{s\in[0,1]} \in \curves(\rho^0,\rho^1)}\daction\big[(\rho^s,\welo^s)_{s\in[0,1]}\big]^{1/2},
	\end{equation}
    is a metric on $\dprb$, and turns $\dprb$ into a geodesic space.    
    Moreover, the distance is bounded from below by the discrete homogeneous negative Sobolev norm of second order, see \eqref{eq:def:dW2infty},
    \begin{equation}\label{eq:comp:W2infty'}
    	\norm{\rho^0-\rho^1}_{(\dot W^{2,\infty}_N)'} \leq \sqrt{3} \ddst_\mob(\rho^0,\rho^1) .
    \end{equation}
\end{proposition}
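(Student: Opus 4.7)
The plan is to establish that $\ddst$ is a metric making $\dprb$ into a geodesic space by proving existence of action-minimizing connecting curves, and then to derive the lower bound~\eqref{eq:comp:W2infty'} by testing the continuity equation against elements of $\dot W^{2,\infty}_N$.

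First, I show that minimizing sequences in $\curves(\rho^0,\rho^1)$ are precompact for weak convergence in $\curves$. The key observation is that $\rho^s\in\dprb$ forces $\rho^s_\kappa\le 1/\delta$ and hence $\m_\kappa(\rho^s)\le 1/\delta$ by Definition~\ref{def:adm:mob}(4). A uniform action bound $\daction\le C$ therefore yields a uniform $L^2([0,1])$-bound on each $(\welo_n)_\kappa$, and then by the continuity equation~\eqref{eq:dcont999} a uniform $H^1([0,1])$-bound on each $(\rho_n)_\kappa$. A diagonal extraction gives a weakly convergent subsequence in $\curves$; the compact embedding $H^1\hookrightarrow C$ carries the endpoint, mass, and non-negativity constraints to the limit, while linearity carries~\eqref{eq:dcont999}. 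Lower semi-continuity from Lemma~\ref{lem:lsc} then produces a minimizer in $\curves(\rho^0,\rho^1)$ for~\eqref{eq:defddst}. Non-degeneracy follows: if $\ddst(\rho^0,\rho^1)=0$, the minimizer has vanishing action, which forces $\welo^s_\kappa=0$ almost everywhere (the convex relaxation $\frc{\cdot}{\cdot}$ penalizes $\welo\neq 0$ whenever $\m=0$), so $\dot\rho^s=\ddff\welo^s=0$ and $\rho^0=\rho^1$. Symmetry is immediate via $s\mapsto 1-s$, $\welo\mapsto-\welo$. A standard reparametrization argument using Cauchy--Schwarz in $s$ shows that minimizers may be taken at constant speed (action density equal to $\ddst(\rho^0,\rho^1)^2$ a.e.\ in $s$), and restriction to subintervals then yields the geodesic property.

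For the triangle inequality, I concatenate constant-speed geodesics from $\rho^0$ to $\rho^1$ and from $\rho^1$ to $\rho^2$ on $[0,\alpha]$ and $[\alpha,1]$ with $\alpha=\ddst(\rho^0,\rho^1)/(\ddst(\rho^0,\rho^1)+\ddst(\rho^1,\rho^2))$; linear rescaling of time by factor $\alpha$ rescales $\welo$ by $1/\alpha$ to preserve~\eqref{eq:dcont999}, and a direct computation yields total action $(\ddst(\rho^0,\rho^1)+\ddst(\rho^1,\rho^2))^2$. For the lower bound~\eqref{eq:comp:W2infty'}, fix any $\varphi:[N]\to\R$ of zero average with $\|\varphi\|_{\dot W^{2,\infty}_N}\le 1$ and any $(\rho^s,\welo^s)\in\curves(\rho^0,\rho^1)$. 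Using the symmetry of $\ddff$ from~\eqref{eq:ddffsymmetric} and a Cauchy--Schwarz estimate weighted by $\m_\kappa(\rho^s)$,
\begin{equation*}
\skp{\varphi,\rho^1-\rho^0}_\delta
=\int_0^1\skp{\ddff\varphi,\welo^s}_\delta\,\dd s
\le\int_0^1\biggl(\delta\sum\nolimits_\kappa\m_\kappa(\rho^s)(\ddff_\kappa\varphi)^2\biggr)^{\!1/2}\biggl(\delta\sum\nolimits_\kappa\frc{(\welo^s_\kappa)}{\m_\kappa(\rho^s)}\biggr)^{\!1/2}\,\dd s.
\end{equation*}
Definition~\ref{def:adm:mob}(4) gives $\m_\kappa(\rho^s)\le\rho^s_{\kappa-1}+\rho^s_\kappa+\rho^s_{\kappa+1}$, so unit mass of $\rho^s$ yields $\delta\sum_\kappa\m_\kappa(\rho^s)\le 3$, and the first bracket is bounded by $\sqrt{3}\,\|\varphi\|_{\dot W^{2,\infty}_N}\le\sqrt{3}$. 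A final Cauchy--Schwarz in $s$ then gives $\skp{\varphi,\rho^1-\rho^0}_\delta\le\sqrt{3}\,\daction[(\rho^s,\welo^s)_{s\in[0,1]}]^{1/2}$; passing to the supremum in $\varphi$ and the infimum over curves produces~\eqref{eq:comp:W2infty'}. The main obstacle is the compactness argument, but the uniform bound $\m\le 1/\delta$ (which has no continuous analogue, cf.\ Lemma~\ref{lem:H-2}) makes it considerably simpler than in the continuous case.
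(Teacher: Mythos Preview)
Your proof is correct and follows essentially the same approach as the paper: the compactness argument for existence of minimizers (via the bound $\m\le 1/\delta$), the lower semi-continuity from Lemma~\ref{lem:lsc}, and the weighted Cauchy--Schwarz estimate for~\eqref{eq:comp:W2infty'} are identical. The only notable difference is that the paper establishes~\eqref{eq:comp:W2infty'} first and uses it to deduce non-degeneracy, whereas you argue directly from the minimizer that zero action forces $\welo\equiv 0$; both routes are valid.
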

\begin{proof}
    We first show~\eqref{eq:comp:W2infty'}, which also shows the positivity of the diffusive transport distance. 
    To do so, we let $\eps > 0$ and $(\rho^s, \welo^s)_{s\in[0,1]} \in \curves(\rho^0,\rho^1)$ such that
	\begin{equation*}
		\daction\big[(\rho^s,\welo^s)_{s\in[0,1]}\big]^{\!1/2} < \ddst_\mob(\rho^0, \rho^1) +  \eps \,.
	\end{equation*}
	We obtain for any $\varphi\in W^{2,\infty}_N$ the estimate
	\begin{align*}
		\biggl| \delta \sum\nolimits_\kappa \varphi_\kappa (\rho^0_\kappa - \rho^1_\kappa) \biggr| &= \biggl| \delta \sum\nolimits_\kappa \varphi_\kappa \ddff_\kappa \welo^s \dd s \biggr| \\
		&\leq \biggl( \int_0^1 \delta \sum\nolimits_\kappa \m_\kappa (\rho^s) \left[\ddff_\kappa \varphi\right]^2 \dd s \biggr)^{\!1/2} \biggl( \int_0^1 \delta \sum\nolimits_\kappa \left[\frac{(\welo^s_\kappa)^2}{\m_\kappa(\rho^s)}\right] \dd s \biggr)^{\!1/2} \\ %
        &\leq \norm{\varphi}_{\dot W^{2,\infty}_N} \bra[\bigg]{  \delta \sum\nolimits_\kappa \int_0^1 \mob_\kappa(\rho^s) \dd s}^{\!1/2} \bigl( \ddst_\mob(\rho^0, \rho^1) +  \eps\bigr)\,.
	\end{align*}
    Since, $\m_\kappa(\rho)\le\max(\rho_\kappa,\rho_{\kappa+1},\rho_{\kappa-1})$ by (4) from Definition~\ref{def:adm:mob} of admissible mobility, we get $\delta\sum\nolimits_\kappa\m_\kappa(\rho)\le 3$. Thus, by taking the supremum among all $\varphi\in W^{2,\infty}_N$ with norm bounded by $1$, we get the estimate~\eqref{eq:comp:W2infty'}.
    In particular, if $\ddst_\mob(\rho^0,\rho^1)=0$, then also $\rho^0=\rho^1$.
 
    The triangular inequality relies on a standard argument for parametrisation by arc-length, as e.g. proven in \cite[Lemma 1.1.4]{RoSa03}. Symmetry is obvious. Thus $\ddst_\mob$ is a metric.

    To verify that $(\dprb,\ddst_\mob)$ is a geodesic space, we need to show that, for each choice of $\rho^0,\rho^1\in\dprb$, the infimum in \eqref{eq:defddst} is realized by some curve $(\rho^s,\welo^s)_{s\in[0,1]}\in\curves(\rho^0,\rho^1)$. Since there is at least one curve $(\bar\rho,\bar\welo)\in\curves(\hat\rho,\check\rho)$ of finite action $A<\infty$, and since the action functional is non-negative, $\daction$ has a finite infimum $I\in[0,A]$ on $\curves(\rho^0,\rho^1)$. We need to prove that $I$ is attained. 
	
	Let $(\rho_n,\welo_n)$ a minimizing sequence, with $\daction[(\rho_n,\welo_n)]\le I+1$. Since $\rho_n^s\in\dprb$, we have the trivial upper bound $(\rho^s_n)_\kappa\le\frac1\delta$ and thus $\m_\kappa(\rho^s_n)\le\frac1\delta$. It then follows from
	\begin{align*}
		\int_0^1 (\welo^s_\kappa)^2\dd s 
		\le \frac1\delta\int_0^1\frc{(\welo_n^s)_\kappa}{\m_\kappa(\rho^s_n)}\dd s 
		\le \frac{I+1}{\delta}
	\end{align*}
	that each $(\welo_n)_\kappa$ is $n$-uniformly bounded in $L^2([0,1])$. Using a diagonal argument, there are a (non-relabeled) subsequence and a limit curve $\welo_*$, such that $(\welo_n)_\kappa\rightharpoonup(\welo_*)_\kappa$ in $L^2([0,1])$ for each $\kappa\in[N]$. Now, from the continuity equation $\dot\rho_n = \ddff\welo_n$ and since the boundary values $\rho_n^0=\rho^0$ are fixed, it follows that each $(\rho_n)_\kappa$ converges weakly in $H^1([0,1])$ --- and thus also strongly in $C([0,1])$ --- to a limit $(\rho_*)_\kappa$. By lower semi-continuity, see Lemma~\ref{lem:lsc} above, $(\rho_*,\welo_*)$ is the sought minimizer.
\end{proof}
\begin{remark}
    For the two-point space, $N=2$, we \change{get that $\rho_{\kappa-1}=\rho_{\kappa+1}$ and hence the mobility $\mob(b,a,c)$ becomes a function of two values: $\theta(b,a)=\mob(b,a,a)$. If $\theta$ satisfies suitable assumptions, as e.g.~\cite[Assumption 2.2]{Maas2011},} then $\ddst_\mob$ coincides with the discretized $L^2$-Wasserstein metric that has been introduced independently in \cite{Maas2011,MielkeMC,Chow}, see in particular \cite[Section 2]{Maas2011}.
   	\change{In addition, for the choice~\eqref{eq:def:mob} providing~\eqref{eq:dDLSS0} as the gradient flow of the entropy with respect to $\ddst_\mob$, the mobility $\theta$ is the logarithmic mean, see~\eqref{eq:def:GeoLogMean}, which has been the mobility of choice in~\cite{Maas2011} for the gradient flow formulation of the discretized linear Fokker-Planck equation.}
\end{remark}
Another property, which is preserved by the discretization of the metric, is the interpolation estimate from Lemma~\ref{lem:AubinLions}.
\begin{lemma}\label{lem:AubinLions:discrete}
	If $\rhon\in \AC([0,T],(\dprb,\ddst_\mob))$, then also $\rhon\in \AC([0,T],(\dprb,\dhell))$, with 
	\begin{equation*}%
		\dhell(\rho^\delta(s),\rho^\delta(t)) \leq C \ddst_\mob(\rho^\delta(s),\rho^\delta(t))^{\frac{1}{12}} \qquad\text{for all $0\leq s \leq t \leq T$},
	\end{equation*}
    where $C$ is expressible in terms of $C_\fish:=\sup_{t\in [0,T]} \dfish(\rho_t^\delta)$ alone, without explicit dependence on $\delta$.
\end{lemma}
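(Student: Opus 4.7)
The plan is to mirror the argument of Lemma~\ref{lem:AubinLions} in the discrete setting, replacing each continuous tool by its discrete counterpart. The central task is to ensure that every constant depends only on $C_\fish$, with no hidden dependence on~$\delta$.

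I begin with the elementary pointwise inequality $|\sqrt a-\sqrt b|^3\le(a-b)(\sqrt a-\sqrt b)$ applied to $a=\rho^\delta_\kappa(s)$ and $b=\rho^\delta_\kappa(t)$. Summing with weight~$\delta$ over $\kappa\in[N]$ and invoking Jensen's inequality with respect to the probability measure that assigns mass~$\delta$ to each site, followed by a discrete summation by parts (i.e., $\dot H^1_N$--$\dot H^{-1}_N$ duality for mean-zero sequences), yields
\begin{equation*}
    \dhell(\rho^\delta(s),\rho^\delta(t))^2
    \le \biggl(\delta\sum\nolimits_\kappa\bigl|\sqrt{\rho^\delta_\kappa(s)}-\sqrt{\rho^\delta_\kappa(t)}\bigr|^3\biggr)^{\!2/3}
    \le \bigl\|\sqrt{\rho^\delta(s)}-\sqrt{\rho^\delta(t)}\bigr\|_{\dot H^1_N}^{2/3}\bigl\|\rho^\delta(s)-\rho^\delta(t)\bigr\|_{\dot H^{-1}_N}^{2/3}.
\end{equation*}
The first factor on the right is bounded in terms of $C_\fish$ using the assumed uniform control of the discrete Fisher information (via the discrete counterpart of $(\partial_x\sqrt a-\partial_x\sqrt b)^2\le 2((\partial_x\sqrt a)^2+(\partial_x\sqrt b)^2)$). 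Together with the comparison \eqref{eq:comp:W2infty'} from Proposition~\ref{prop:metric}, the lemma reduces to the discrete interpolation inequality $\|\nu\|_{\dot H^{-1}_N}\le B\,\|\nu\|_{(\dot W^{2,\infty}_N)'}^{1/4}$ for mean-zero $\nu:[N]\to\R$, with a constant $B$ independent of~$\delta$. Chaining all estimates then produces the Hölder exponent $\tfrac23\cdot\tfrac14\cdot\tfrac12=\tfrac1{12}$ claimed in the statement.

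The hard part is this discrete interpolation inequality. My approach is to replicate the mollification step from the proof of Lemma~\ref{lem:AubinLions} on the discrete mesh: for each mean-zero test function $\varphi:[N]\to\R$ and each $\eps\in(0,1)$, introduce a discretization $K^\delta_\eps$ of the Van-Mises kernel, set $\varphi_\eps:=K^\delta_\eps\ast\varphi$ (discrete convolution on $[N]$), and establish the two $\delta$-uniform kernel estimates $\|\varphi-\varphi_\eps\|_{L^\infty_N}\lesssim \eps^{1/4}\|\varphi\|_{\dot H^1_N}$ and $\|\varphi_\eps\|_{\dot W^{2,\infty}_N}\lesssim \eps^{-3/4}\|\varphi\|_{\dot H^1_N}$. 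Combining these with the splitting $|\skp{\varphi,\nu}_\delta|\le 2\|\varphi-\varphi_\eps\|_{L^\infty_N}+\|\varphi_\eps\|_{\dot W^{2,\infty}_N}\|\nu\|_{(\dot W^{2,\infty}_N)'}$ (where the factor~$2$ uses $\|\nu\|_{L^1_N}\le 2$ for a difference of probability densities) and optimizing $\eps=\|\nu\|_{(\dot W^{2,\infty}_N)'}$ gives the desired interpolation. The principal technical content is the verification that the two kernel estimates really are uniform in~$\delta$: in the regime $\eps\gtrsim\delta$ they follow from the continuous estimates of Lemma~\ref{lem:AubinLions} by Riemann-sum comparison, whereas for $\eps\lesssim\delta$ the discrete kernel can no longer resolve sub-mesh scales, and one must fall back on discrete Bernstein-type inequalities on the $\delta$-mesh combined with the one-dimensional discrete Sobolev embedding $\|\varphi\|_{L^\infty_N}\lesssim\|\varphi\|_{\dot H^1_N}$. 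This case distinction is the main obstacle of the proof; all remaining steps are routine translations of the continuous argument.
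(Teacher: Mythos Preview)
Your proposal follows essentially the same route as the paper's proof: the cubic root inequality combined with Jensen, the $\dot H^1_N$--$\dot H^{-1}_N$ factorization with the first factor controlled by $C_\fish$, and the reduction to a discrete interpolation $\|\nu\|_{\dot H^{-1}_N}\lesssim\|\nu\|_{(\dot W^{2,\infty}_N)'}^{1/4}$ proved by mollification with a discretized van~Mises kernel and then combined with \eqref{eq:comp:W2infty'}. The paper writes the $\dot H^{-1}_N$ norm explicitly as the $L^2_N$ norm of a discrete primitive $R(s,t)$ of $\rho^\delta(s)-\rho^\delta(t)$, but this is only a notational difference.

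The one point where you go beyond the paper is your explicit case distinction $\eps\gtrsim\delta$ versus $\eps\lesssim\delta$ for the kernel estimates. The paper simply asserts that ``the rest of the scaling arguments \ldots\ carry over'' with constants uniform in $\delta$, without isolating the sub-mesh regime. Your plan to handle the small-$\eps$ case separately via discrete Sobolev/Bernstein bounds is a reasonable way to make that assertion rigorous; just be aware that the paper itself does not spell this out.
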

\begin{proof}
    For the sake of simplicity, we omit the superscript $\delta$ in the proof. In preparation, define the primitive $R(s,t)$ of $\rho(s)-\rho(t)$, that is $\partial^\delta_+ R(s,t) = {\rho(s)}-{\rho(t)}$. Note that $R(s,t)$ is well-defined, since the difference has mean zero by conservation of mass.
    
    The proof goes along the same lines as for Lemma~\ref{lem:AubinLions}. We begin with the Hölder estimate~\eqref{eq:HellHoelder} and apply the elementary inequality \eqref{eq:elementary}. In terms of the primitive $R(s,t)$, we arrive after a summation by parts and Cauchy-Schwarz at
	\begin{align}
		\dhell(\rho(s),\rho(t)) &\leq \pra[\bigg]{ \delta \sum\nolimits_\kappa  \bigl(\sqrt{\rho(s)}-\sqrt{\rho(t)}\bigr)  \partial^\delta_+R(s,t)}^{\!2/3} \notag \\
		&\leq \pra[\bigg]{ \delta \sum\nolimits_\kappa  \abs*{\partial^\delta_- \bra*{ \sqrt{\rho(s)}-\sqrt{\rho(t)}}}^2}^{\!1/3} \pra[\bigg]{ \delta \sum\nolimits_\kappa  R(s,t)^2}^{\!1/3}  .  \label{eq:dhell:interpol}
	\end{align}
	Note, that 
	$\abs[\big]{\partial^\delta_- \bra*{ \sqrt{\rho(s)}-\sqrt{\rho(t)}}}^2 \leq 2 \abs[\big]{\partial^\delta_- \sqrt{\rho(s)}}^2 +2 \abs[\big]{\partial^\delta_- \sqrt{\rho(t)}}^2$ and so the first term on the right-hand side of~\eqref{eq:dhell:interpol} is bounded in terms of $C_\fish$. %
	For the second term, we establish the discrete analog of inequality~\eqref{eq:est:H-1:W2'} from Lemma~\ref{lem:AubinLions} taking the form
	\begin{equation}\label{eq:est:H-1:W2':discrete}
		\pra[\bigg]{ \delta \sum\nolimits_\kappa  R(s,t)^2}^{\!1/2} \leq C \norm{\rho(s)-\rho(t)}_{(W^{2,\infty}_N)'}^{\frac{1}{4}} ,
	\end{equation}
	which with the bound estimate~\eqref{eq:comp:W2infty'} provides the conclusion. 
	For proving~\eqref{eq:est:H-1:W2':discrete}, we rewrite the $L^2_N$-norm of $R(s,t)$ as an $H^{-1}$-norm in dual formulation,
	\begin{equation*}
		\pra[\bigg]{ \delta \sum\nolimits_\kappa  R(s,t)^2}^{\!1/2} = \sup_{\varphi}\set[\bigg]{ \delta \sum\nolimits_\kappa \varphi_\kappa \bra*{ \rho_\kappa(s) - \rho_\kappa(t)} : \delta \sum\nolimits_\kappa \abs*{\partial^\delta_+ \varphi}^2 \leq 1} .
	\end{equation*}
	Fix $\varphi:[N]\to \R$ and consider its convolution-type regularization $\varphi^\eps_{\lambda} = \delta \sum\nolimits_\kappa K^{\eps,\delta}_{\lambda-\kappa} \varphi_\kappa$ with the discretized van Mises kernel $K^{\eps,\delta}_\kappa = c_{\eps,N}^{-1} \exp\bra*{ \eps^{-1} \cos(2\pi \delta \kappa)}$, normalized by $c_{\eps,N} = \sqrt{2\pi \eps} + O_\eps(\eps) + o_\delta(1)$. Then, can estimate as in~\eqref{eq:H1:W2infty} \change{by a triangle and duality inequality to obtain}
	\begin{equation*}
		\delta \sum\nolimits_\kappa \varphi_\kappa \bra*{ \rho_\kappa(s) - \rho_\kappa(t)} \leq 2 \sup_\kappa \abs*{ \varphi_\kappa- \varphi_\kappa^\eps} + \norm{\varphi^\eps}_{W^{2,\infty}_N} \norm{\rho(s)-\rho(t)}_{(W^{2,\infty}_N)'} \,,
	\end{equation*}
	\change{where we have used that $\rho(s),\rho(t)\in \Prob^\delta$ have unit mass.}
	By definition of $K^{\eps,\delta}$, the rest of the scaling arguments in the proof of Lemma~\ref{lem:AubinLions} for establishing~\eqref{eq:est:H-1:W2':discrete} carry over. In particular, the constant $C$ in~\eqref{eq:est:H-1:W2':discrete} is uniform in $\delta$. 
\end{proof}

\subsection{Gradient flows with respect to the discrete diffusive transport distance}%
We close with a formal consideration of gradient flows on the metric space $(\dprb,\ddst_\mob)$. For that, we consider the $N$-dimensional smooth manifold $\dprbplus\subset\dprb$ of \emph{strictly positive} densities. A tangent vector at $\rho$ is associated to a function $\varphi:[N]\to\R$ of zero average, the corresponding infinitesimal tangential motion on $\dprbplus$ being given by $\dot\rho=\ddff(\mob(\rho)\ddff\varphi)$. By the properties of the discrete Laplacian, any mass-preserving motion $\dot\rho$ possesses a unique such representation.

The metric $\ddst_\mob$ makes $\dprbplus$ a Riemannian manifold in the classsical sense. The metric tensor at a given $\rho$ corresponds to the scalar product \begin{align*}
    (\varphi,\psi)_\rho := \delta\sum\nolimits_\kappa \m_\kappa(\rho)\,\ddff_\kappa\varphi\,\ddff_\kappa\psi.      
\end{align*}
The differential $F'(\rho)\in\tg^*_\rho\dprbplus$ of a smooth function $F:\dprbplus\to\R$ can be identified with the function $\xi:[N]\to\R$ consisting of partial derivatives,
\[ \xi_\kappa = \frac{\partial F}{\partial\rho_\kappa}(\rho)-\sigma, \]
with $\sigma\in\R$ chosen such that $\xi$ has zero average. Accordingly, the gradient $\nabla F(\rho)$ of $F$ is defined as the unique $\psi\in\tg_\rho\dprbplus$ with $(\psi,\cdot)_\rho=\dn_\rho F$. More explicitly, $\psi$ satisfies
\begin{align*}
    \delta\sum\nolimits_\kappa \m_\kappa(\rho) \,\ddff_\kappa\psi\,\ddff_\kappa\varphi = \delta\sum\nolimits_\kappa \frac{\partial F(\rho)}{\partial\rho_\kappa} \ddff_\kappa(\mob(\rho)\ddff\varphi) 
\end{align*}
for all $\varphi:[N]\to\R$ of zero average. This implies that $\psi=F'(\rho)$. Consequently, $F$'s gradient flow on the Riemannian manifold $(\dprbplus,\ddst_\mob)$ is given by
\begin{equation}\label{eq:general_diffusive_GF}
	\dot \rho = - \ddff\big( \m(\rho)\,\ddff F'(\rho)\big) . 
\end{equation}
Note that $\ddst_\mob$ is a metric on all of $\dprb$, but the metric tensor degenerates for $\rho\in\dprb\setminus\dprbplus$, i.e., when $\rho_\kappa$ vanishes at some $\kappa\in[N]$. Thus, $F$'s gradient is not defined at those $\rho$, even if $F$ is smooth everywhere on $\dprb$. 

\begin{remark}[Interpolation in the spirit of Scharfetter--Gummel~\cite{SG1969}]%
	A possible mobility function for a discrete gradient flow formulation of~\eqref{eq:DLSS} can be obtained by following the construction after Scharfetter--Gummel~\cite{SG1969} (see also~\cite{il1969difference}), who constructed a two-point flux interpolation for diffusion equations.
	This idea became the basis for numerous other generalizations, e.g.\ for equations with nonlinear diffusion \cite{bessemoulin2012finite, EymardFuhrmannGaertner2006, jungel1995numerical} or aggregation-diffusion equations~\cite{SchlichtingSeis2022,HraivoronskaSchlichtingTse2023}. 
	For the static quantum drift diffuion~\cite{ChainaisHillairetGisclonJuengel2010} obtains a finite volume discretization based on a rewriting as a system of two second order equations.
	
	In a similar spirit, we can obtain a finite volume discretization by considering the semi-discrete second-order continuity equation
	$\dot \rho = -\ddff j$, where the flux $j_\kappa = j[\rho_{\kappa-1},\rho_\kappa,\rho_{\kappa+1}]$ shall be a \emph{three-point} approximation of the DLSS-flux $\rho \partial_{xx} \log \rho$.
	Following the idea of Scharfetter--Gummel~\cite{SG1969}, we ask the flux $j$ to solve the second order cell problem
	\[
	j = \rho \partial_{xx}\log \rho \ \text{ on } (-\delta,\delta) ,\qquad\text{with}\qquad  \rho(-\delta)=\rho_{\kappa-1}, \quad \rho(0) =\rho_\kappa ,\quad \rho(\delta) = \rho_{\kappa+1} . 
	\]
	The identity $\partial_{xx} \log \cos x = - \cos(x)^{-2}$ and the affine invariance of the solutions imply that 
	for arbitrary $x_0,\omega\in \R$ the general solution is given by $\rho(x) =\frac{j}{2\omega}\cos\bra*{\sqrt{\omega}( x +x_0)}^2$ (understood complex for $\omega<0$). 
	The parameters $x_0, \omega ,j$ must be solved in terms of the boundary values, respectively.
	Although, the procedure leads to an implicit equation, we believe it can provide a very accurate numerical finite volume scheme with good structure preserving properties being also applicable to the DLSS equation with external potentials or in self-similar variables. For now, we study the explicit mobility function~\eqref{eq:def:mob} below, which has already surprising structure preserving properties and leave more general ones for future investigation.
\end{remark}

\section{Variational discretization of the DLSS equation}
\label{sct:QDDnum}
The motivation for our discretization \eqref{eq:dDLSS0} of the DLSS equation is that it is the gradient flow on the finite-dimensional Riemannian manifold $(\dprb,\ddst_\mob)$ of a discretized entropy functional $\dent$ with respect to the discrete diffusive transport metric $\ddst_\mob$ for an appropriate mobility $\mob$. 

\subsection{Mobility function}
For the discretization of \eqref{eq:DLSS}, we use the mobility $\mob:\Rnn^3\to\Rnn$ given by
\begin{align}\label{eq:def:mob}
	\mob(r_0,r_+,r_-) :=
	\begin{cases}
		\frac{\sqrt{r_{\smash+}r_{\smash-}}-r_0}{\log\sqrt{r_{\smash+}r_{\smash-}}-\log r_0} & \text{if $r_0,r_+,r_->0$ and $r_+r_-\neq r_0^2$}, \\
		r_0 & \text{if $r_+r_-=r_0^2$}, \\
		0 & \text{if $r_0=0$ or $r_+=0$ or $r_-=0$}.
	\end{cases}
\end{align}
Notice that $\mob$'s definition is independent of the step size $\delta$. Further, $\mob$ possesses a representation as a nested mean of its arguments: 
recall the definitions of the geometric and the logarithmic mean, 
\begin{equation}\label{eq:def:GeoLogMean}
\mathbf{G}(a,b) := \sqrt{ab}, \qquad
\logmean(a,b) := \begin{cases} 
	\frac{a-b}{\log a-\log b}, & a\ne b ;\\
	a , & a=b ,
\end{cases}	 \qquad\text{for $a,b>0$.}
\end{equation}
Then, we have the identity $\mob(r_0,r_+,r_-) = \logmean\bigl(r_0,\mathbf{G}(r_+,r_-)\bigr)$.
\begin{remark}
    The above construction of an admissible mobility out of two mean functions is not restricted to the concrete choice of the two-point means $\mathbf{G}$ and $\logmean$ in~\eqref{eq:def:GeoLogMean} but generalizes to other means under suitable concavity and monotonicity assumptions.
\end{remark}
\begin{lemma}\label{le:threepointmean}
	$\mob$ is admissible in the sense of Definition~\ref{def:adm:mob}. In addition, $\mob$ is one-homogeneous, and is zero if and only if one of its arguments are zero. Moreover,
	\begin{align}
		\label{eq:logmeancompare}
		\logmean\big(r_0,\min\{r_+,r_-\}\big) \le\mob(r_0,r_+,r_-) \le \logmean\big(r_0,\max\{r_+,r_-\}\big).
	\end{align}   
\end{lemma}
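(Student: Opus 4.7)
The plan is to read every property of $\mob$ off the nested mean representation $\mob(r_0,r_+,r_-)=\logmean\bigl(r_0,\mathbf{G}(r_+,r_-)\bigr)$ by transferring the corresponding property of the two-point means $\mathbf{G}$ and $\logmean$. Both means are symmetric, one-homogeneous, continuous on $\Rnn^2$, strictly positive on $\Rp^2$, monotone non-decreasing in each argument, and satisfy the two-point sandwich $\min(a,b)\le M(a,b)\le\max(a,b)$. Continuity of $\logmean$ at the diagonal $a=b$ follows from L'H\^opital, and continuity up to the boundary $\Rnn^2$ is a direct check: if either argument tends to $0$, the numerator in \eqref{eq:def:GeoLogMean} stays bounded while the denominator blows up, so $\logmean(a,b)\to 0$ in agreement with the convention imposed in \eqref{eq:def:mob}.

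The symmetry (3), the sandwich (4) and the one-homogeneity of $\mob$ are now immediate: for (3) use symmetry of $\mathbf{G}$ in $(r_+,r_-)$; for (4) use $\min\{r_+,r_-\}\le\mathbf{G}(r_+,r_-)\le\max\{r_+,r_-\}$ combined with the two-point sandwich for $\logmean$; and one-homogeneity passes through both nested levels. The characterization of the zeros is equally direct: $\mathbf{G}(r_+,r_-)=0$ iff $r_+=0$ or $r_-=0$, and $\logmean(a,b)=0$ iff $a=0$ or $b=0$, so $\mob(r_0,r_+,r_-)=0$ iff at least one of the three arguments vanishes. The two-sided estimate \eqref{eq:logmeancompare} follows from the monotonicity of $\logmean$ in its second argument applied to the bounds $\min\{r_+,r_-\}\le\mathbf{G}(r_+,r_-)\le\max\{r_+,r_-\}$.

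The only genuinely nontrivial point is concavity. I will establish it in two steps. First, for $\logmean$ I use the Stolarsky-type integral representation
\begin{equation*}
    \logmean(a,b)=\int_0^1 a^{t}\,b^{1-t}\dd t\qquad\text{for }a,b>0,
\end{equation*}
which checks directly from $\int_0^1 a^{t}b^{1-t}\dd t=b\int_0^1 (a/b)^t\dd t=(a-b)/\log(a/b)$. For each fixed $t\in[0,1]$, the Cobb-Douglas function $(a,b)\mapsto a^{t}b^{1-t}$ is concave on $\Rnn^2$ (its exponents are nonnegative and sum to one), so $\logmean$ is concave as an integral average of concave functions. The geometric mean $\mathbf{G}$ is concave as the $t=1/2$ case of the same observation. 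Second, I invoke the elementary fact that if $g\colon\Rnn^2\to\Rnn$ is concave and non-decreasing in its first argument, and $f\colon\Rnn^2\to\Rnn$ is concave, then $(r_0,r_+,r_-)\mapsto g\bigl(f(r_+,r_-),r_0\bigr)$ is concave on $\Rnn^3$: for any convex combination, apply concavity of $f$ inside, monotonicity of $g$ in the first slot, and then concavity of $g$. Applied to $g=\logmean$ (after exploiting its symmetry to place $\mathbf{G}(r_+,r_-)$ in the first slot) and $f=\mathbf{G}$, this yields concavity of $\mob$.

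I expect the main obstacle to be the concavity step: neither $\logmean$ nor the nested composition is jointly concave by inspection, and one must use a structural representation (here the integral one) to get concavity of $\logmean$, together with the monotonicity hypothesis on the outer function to lift concavity through composition.
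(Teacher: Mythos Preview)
Your proof is correct. The main difference from the paper's argument lies in the treatment of concavity. The paper first exploits one-homogeneity to reduce $\mob$ to the two-variable function $\mobb(\sigma_+,\sigma_-)=\mob(1,\sigma_+,\sigma_-)$, then computes Hessians explicitly via the chain rule to reduce further to concavity of the single-variable function $f(z)=(\sqrt{z}-1)/\log z$, and only then invokes the integral representation $f(z)=\int_0^1 z^{s/2}\,\dd s$ (an average of concave power functions). Your route is more structural: you use the two-variable integral representation $\logmean(a,b)=\int_0^1 a^t b^{1-t}\,\dd t$ to get joint concavity of $\logmean$ directly, and then lift concavity through the nested-mean composition via the standard ``outer function concave and monotone in the inner slot'' lemma. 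Your approach avoids any Hessian computation and makes transparent why the construction generalizes to other nested means (as the paper remarks but does not prove); the paper's approach, by contrast, is more hands-on and self-contained in that it does not require the reader to recall or verify the composition lemma. The remaining items (symmetry, sandwich bounds, one-homogeneity, continuity, zero set, and \eqref{eq:logmeancompare}) are handled essentially identically in both proofs.
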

\begin{proof}
    Continuity and one-homogeneity are obvious from the definition \eqref{eq:def:mob}. It remains to verify concavity and inequality \eqref{eq:logmeancompare} --- which implies in particular the inequality from Definition~\ref{def:adm:mob}, and also that $\mob(r_0,r_+,r_-)=0$ if and only if $r_0r_+r_-=0$.
    
	For the proof of concavity, use that by one-homogeneity
	\[ r_0\mob(1,\sigma_+,\sigma_-) = \mob(r_0,r_0\sigma_+,r_0\sigma_-) \qquad\text{for } r_0,\sigma_+,\sigma_-\in \Rnn. \]
	Defining
	\[ \mobb(\sigma_+,\sigma_-):=\mob(1,\sigma_+,\sigma_-) = \frac{\sqrt{\sigma_{\smash+}\sigma_-}-1}{\log(\sigma_+\sigma_-)}, \]
	we find with $r=(r_0,r_+,r_-)\in \Rnn^3$ and $\sigma=(\sigma_+,\sigma_-)\in \Rnn^2$ that
	\begin{align*}
		\nabla_r^2\mob(r)
		= \nabla_r^2\left[r_0\mobb\left(\frac{r_+}{r_0},\frac{r_-}{r_0}\right)\right]
		= A(r)^T\nabla^2_\sigma\mobb\left(\frac{r_+}{r_0},\frac{r_-}{r_0}\right)A(r)
		\ \text{ with }\ A(r)=\begin{pmatrix}
			-\frac{r_+}{r_0} & 1 & 0 \\
			-\frac{r_-}{r_0} & 0 & 1
		\end{pmatrix}.
	\end{align*}
	It follows that $\mob$ is concave if $\mobb$ is. To verify concavity of $\mobb$, write
	\[ \mobb(\sigma) = f(\sigma_+\sigma_-) \quad\text{with}\quad f(z) = \frac{\sqrt{z}-1}{\log z} \]
	and notice that
	\[ \nabla^2_{(u,v)}\mobb(u,v) = f''(uv)\,\begin{pmatrix}v\\u\end{pmatrix}\begin{pmatrix}v\\u\end{pmatrix}^T,\]
	hence $\mobb$ is concave if $f$ is. Concavity of $f$ follows from the representation
	\begin{align*}
		f(z) = \int_0^1 z^{s/2}\dd s.
	\end{align*}
	That is, $f$ is an average of the concave functions $z\mapsto z^{s/2}$ for $0\le s\le1$, and therefore concave as well. 
	The comparison \eqref{eq:logmeancompare} follows by monotonicity of the logarithmic mean, and since $\min\{r_+,r_-\}\le\sqrt{r_{\smash+}r_-}\le\max\{r_+,r_-\}$.
\end{proof}
According to Proposition~\ref{prop:metric}, the mobility $\mob$ defined in~\eqref{eq:def:mob} gives rise to a discrete diffusive transport metric $\ddst_\mob$ on $\dprb$. The metric gradient flow~\eqref{eq:general_diffusive_GF} with respect to $\ddst_\mob$ for the discrete entropy functional $\dent$, see \eqref{eqdef:dentropy}, takes the form
\begin{align}\label{eq:dDLSS:GF}
	\dot\rho_\kappa
	= -\ddff_\kappa\bigl(\mob(\rho)\ddff\log\rho\bigr), \qquad\text{ for } \kappa\in [N].
\end{align}
By our choice of $\mob$, this equation agrees with the proposed scheme~\eqref{eq:dDLSS0}. Using the notations introduced in Section \ref{ssec:discrete:notation}, the latter is written in the compact form
\begin{align}
	\label{eq:dDLSS}
	\dot\rho_\kappa = - 2 \ddff_\kappa\biggl(\frac{\sqrt{\rho_+\rho_-}-\rho}{\delta^2}\biggr) \qquad\text{for $\kappa \in [N]$}.
\end{align}
\begin{remark}
    Our particular choice of mobility $\mob$ is motivated by the following alternative representation of \eqref{eq:dDLSS:GF}:
    \begin{align}
        \label{eq:dDLSS:savare}
        \frac{\dd}{\dn t}\change{\sqrt{\rho_\kappa }}
        = -\ddff_\kappa\ddff\sqrt\rho + \frac{\big(\ddff_\kappa\sqrt\rho\big)^2}{\sqrt{\rho_\kappa}}.
    \end{align}
    Equivalence of \eqref{eq:dDLSS:GF}, \eqref{eq:dDLSS}, and \eqref{eq:dDLSS:savare} for positive solutions is easily checked by direct computation. The form \eqref{eq:dDLSS:savare} is the direct analogue of \eqref{eq:dlssVH} and facilitates the proof of contractivity in the discrete Hellinger distance, see Lemma \ref{lem:dDLSShell} below.
\end{remark}

\subsection{Properties of the discretization}\label{sct:dQDDprop}
Below, we collect the properties stated in Result~\ref{Result:Scheme} that are satisfied by solutions to \eqref{eq:dDLSS}, which are then proven in the rest of Section \ref{sct:QDDnum}. First, we establish global well-posedness of \eqref{eq:dDLSS} for arbitrary non-negative initial data.
\begin{proposition}
    \label{prp:dQDDx1}
    The flow defined by \eqref{eq:dDLSS} on $\dprbplus$ is global and possesses a unique continuous extension to a flow on $\dprb$.
\end{proposition}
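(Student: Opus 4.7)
The argument proceeds in three steps.

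\emph{Step 1 (local flow on $\dprbplus$).} The right-hand side of~\eqref{eq:dDLSS} is real-analytic on the open set $\dprbplus\subset\R^N$. The Picard-Lindelöf theorem thus yields, for every initial datum $\bar\rho\in\dprbplus$, a unique maximal smooth solution $t\mapsto\rho(t)$ on some interval $[0,T_{\max})$. Summing~\eqref{eq:dDLSS} over $\kappa\in[N]$ and using the telescoping property of $\ddff$ shows that the total mass $\delta\sum_\kappa\rho_\kappa$ is conserved, so the solution remains in $\dprb$ throughout its interval of existence.

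\emph{Step 2 (globality on $\dprbplus$).} Since $\dprb$ is compact, the only obstruction to $T_{\max}=\infty$ is $\min_\kappa\rho_\kappa(t)\to 0$ as $t\to T_{\max}$. We preclude this by invoking the dissipation of the Lyapunov functional $\fnclog(\rho):=-\delta\sum_\kappa\log\rho_\kappa$ along the flow---a formal computation valid while the solution is smooth and positive, to be justified elsewhere in Section~\ref{sct:dQDDprop}. Combining $\fnclog(\rho(t))\le\fnclog(\bar\rho)$ with the mass constraint $\delta\sum_\lambda\rho_\lambda=1$ (Jensen gives $\sum_\lambda\log\rho_\lambda\le 0$) and the pointwise bound $\rho_\lambda\le 1/\delta$, we obtain for each $\kappa\in[N]$ that
$$-\log\rho_\kappa(t)\;\le\;\fnclog(\rho(t))/\delta + (N-1)\log N\;\le\;\fnclog(\bar\rho)/\delta + (N-1)\log N,$$
which is a uniform positive lower bound on $\rho_\kappa(t)$. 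This forces $T_{\max}=+\infty$ and keeps the solution in $\dprbplus$ for all time. Standard ODE arguments then also give continuity of the flow map $(t,\bar\rho)\mapsto\rho(t)$ on $[0,\infty)\times\dprbplus$.

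\emph{Step 3 (continuous extension to $\dprb$).} In the form $\dot\rho_\kappa=-2\,\ddff_\kappa F$ with $F_\lambda=(\sqrt{\rho_+\rho_-}-\rho)/\delta^2$, the right-hand side is continuous (though only H\"older-$\tfrac12$ near the boundary) on the compact simplex $\dprb$. For an arbitrary $\bar\rho\in\dprb$, choose approximations $\bar\rho^{(n)}\in\dprbplus$ with $\dhell(\bar\rho^{(n)},\bar\rho)\to 0$. By the discrete Hellinger contractivity proved below (Lemma~\ref{lem:dDLSShell}),
$$\dhell(\rho^{(n)}(t),\rho^{(m)}(t))\le\dhell(\bar\rho^{(n)},\bar\rho^{(m)})\quad\text{for all }t\ge 0,$$
so the corresponding global solutions form a Cauchy sequence in $C([0,\infty);(\dprb,\dhell))$, converging uniformly on compact time intervals to a limit $\rho_*$. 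Passing to the limit in the integral form of \eqref{eq:dDLSS}, using uniform convergence together with continuity of the right-hand side on $\dprb$, shows that $\rho_*$ is a classical solution with $\rho_*(0)=\bar\rho$. Independence of the approximating sequence follows from a further application of contractivity, and uniqueness of any continuous extension from the dense subset $\dprbplus$ is automatic.

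\textbf{Main obstacle.} The delicate point is Step~3: the right-hand side is only H\"older-$\tfrac12$ at $\partial\dprb$, so standard ODE theory on the closed simplex $\dprb$ fails and uniqueness at boundary data is not automatic. The resolution is to have at hand a metric in which the flow on $\dprbplus$ is globally non-expansive---namely $\dhell$---so that the flow map extends by uniform continuity to the closure. This is why the forward reference to Hellinger contractivity plays an essential role.
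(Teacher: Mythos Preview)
Your proposal is correct and follows essentially the same approach as the paper: global existence on $\dprbplus$ via the Lyapunov functional $\fnclog$ (the paper's Lemma~\ref{lem:dDLSSpos}, with a Jensen-based bound in place of your cruder $\rho_\lambda\le N$ estimate, but both work), followed by extension to $\dprb$ via Hellinger contractivity (Lemma~\ref{lem:dDLSShell}). Your Step~3 actually goes slightly beyond the paper by verifying that the extended flow satisfies~\eqref{eq:dDLSS} in integral form, whereas the paper contents itself with the abstract Lipschitz extension of the semigroup map.
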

The proof of Proposition \ref{prp:dQDDx1} rests on two auxiliary results: global existence follows from a dissipation property, and the continuous extension to $\dprb$ is obtained from contractivity in a discretized Hellinger distance \eqref{eq:def:dhell}.  The corresponding auxiliary results are:
\begin{lemma}
    \label{lem:dDLSSpos}
    The heat capacity defined by 
    \begin{equation*}%
	   \dlyp(\rho):=-\delta\sum\nolimits_\kappa \log\rho_\kappa.        
    \end{equation*}
    is non-increasing along positive solutions to \eqref{eq:dDLSS} and satisfies
    \begin{equation}\label{eq:dlyp:diss}
    	\dlyp(\rho(t)) + \frac12 \int_0^t  \delta \sum\nolimits_\kappa \abs*{\ddff_\kappa \log \rho(s)}^2 \dd s \leq \dlyp(\rho(0)) \qquad\text{for any } t >0  .
    \end{equation}
    Consequently, for each $\rho(0)\in\dprbplus$, there is a unique positive solution $\rho:[0,\infty)\to\dprbplus$ to \eqref{eq:dDLSS}, and that satisfies the a priori bound
   	\begin{equation}\label{eq:sup:lyp:bound}
   		\int_0^t \sup_{\kappa,\lambda} \abs*{ \log \rho^\delta_\kappa(s) - \log \rho^\delta_\lambda(s)}^2 \dd s \leq \change{\frac{C_{\textup{PI}}}{2}} \dlyp(\rho^\delta(0)) \,,
   	\end{equation}
   	\change{with $C_{\textup{PI}}$ the Poincaré constant from Proposition~\ref{prop:PI-LSI}.}
\end{lemma}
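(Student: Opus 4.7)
The plan is to (a) compute $\frac{d}{dt}\dlyp$ along a positive solution, (b) establish the dissipation inequality \eqref{eq:dlyp:diss} via a discrete three-point functional inequality, and (c) deduce global existence together with the sup-bound \eqref{eq:sup:lyp:bound}.

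For (a), since $\rho(t)\in\dprbplus$ the chain rule gives $\frac{d}{dt}\dlyp(\rho) = -\delta\sum_\kappa \dot\rho_\kappa/\rho_\kappa$. Substituting the gradient flow form $\dot\rho = -\ddff(\mob(\rho)\ddff\log\rho)$ from \eqref{eq:dDLSS:GF} and using the symmetry \eqref{eq:ddffsymmetric} of $\ddff$, and then invoking the identity $\mob_\kappa(\rho)\ddff_\kappa\log\rho = \frac{2}{\delta^2}(\sqrt{\rho_+\rho_-}-\rho)_\kappa$ characteristic of our mobility \eqref{eq:def:mob}, one obtains
\[
    \tfrac{d}{dt}\dlyp(\rho) \;=\; \delta\sum_\kappa \mob_\kappa(\rho)\,\ddff_\kappa\log\rho\cdot\ddff_\kappa(1/\rho) \;=\; \tfrac{2}{\delta^2}\,\delta\sum_\kappa(\sqrt{\rho_+\rho_-}-\rho)_\kappa\,\ddff_\kappa(1/\rho).
\]

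Step (b) is the main obstacle. The task is to bound this sum above by $-\tfrac12\delta\sum_\kappa|\ddff_\kappa\log\rho|^2$. A Taylor analysis with $p_\kappa=\log\rho_{\kappa+1}-\log\rho_\kappa$ and $q_\kappa=\log\rho_{\kappa-1}-\log\rho_\kappa$ reveals that the summand equals $-|\ddff_\kappa\log\rho|^2$ at leading order, plus a cubic correction of the form $(p+q)(p-q)^2/(4\delta^4)$. This is the discrete analogue of $\int(\log\rho)''((\log\rho)')^2\,dx$, which vanishes in continuous by periodicity ($=\int\bigl((\log\rho)'^3/3\bigr)'$), but only approximately in discrete. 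A discrete summation by parts together with Young's inequality, which absorbs half of the leading $|\ddff_\kappa\log\rho|^2$ to control the cubic correction, accounts for the prefactor $\tfrac12$. Equivalently, through the Savar\'e form \eqref{eq:dDLSS:savare} with $v=\sqrt\rho$ one obtains $\frac{d}{dt}\dlyp = \delta\sum_\kappa \ddff_\kappa v\,\ddff_\kappa(v^{-2}) - \delta\sum_\kappa(\ddff_\kappa v)^2/v_\kappa^3$ after discrete integration by parts, and the elementary estimate $4\sinh^2(t/2)\ge t^2/2$ (already verified pointwise in the symmetric case $\rho_{\kappa+1}=\rho_{\kappa-1}$, which reduces the claim to $(a-b)^2/(ab)\ge\tfrac12(\log a-\log b)^2$) underlies the factor $\tfrac12$. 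A pointwise version of the three-point inequality fails for sufficiently asymmetric configurations, so the cyclic summation is essential.

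For (c), global existence of positive solutions is immediate from \eqref{eq:dlyp:diss}: since $\max_\kappa\rho_\kappa\le 1/\delta$ implies $\dlyp(\rho)\ge -\delta\log\min_\kappa\rho_\kappa -(1-\delta)\log N$, the non-increase $\dlyp(\rho(t))\le\dlyp(\rho(0))$ yields a uniform-in-time lower bound $\min_\kappa\rho_\kappa(t)\ge c(\dlyp(\rho(0)),\delta)>0$. Thus the smooth flow stays in the compact sublevel set $\{\dlyp\le\dlyp(\rho(0))\}\subset\dprbplus$, on which the RHS of \eqref{eq:dDLSS} is Lipschitz, and Picard-Lindel\"of gives unique global trajectories. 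Finally, for \eqref{eq:sup:lyp:bound}, a discrete Poincar\'e-Sobolev embedding $\sup_{\kappa,\lambda}|u_\kappa-u_\lambda|^2 \le 16\,\delta\sum_\kappa|\ddff_\kappa u|^2$, valid for any $u:[N]\to\R$ with a constant independent of $\delta$ (by discrete Fourier analysis, or by $\sup|u_\kappa-\bar u|\le \|\partial^\delta_+ u\|_{L^2_N}\le C\|\ddff u\|_{L^2_N}$), applied to $u=\log\rho(s)$ and integrated over $s\in[0,t]$ against \eqref{eq:dlyp:diss} produces the claimed constant $32=16\cdot 2$.
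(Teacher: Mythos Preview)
Your steps (a) and (c) are essentially correct and match the paper's argument: the derivative computation via the symmetry of $\ddff$, the lower bound on $\min_\kappa\rho_\kappa$ from the $\dlyp$-level set, and the Sobolev--Poincar\'e chain $\sup_{\kappa,\lambda}|u_\kappa-u_\lambda|^2\le\delta\sum|\partial_+^\delta u|^2\le C_{\textup{PI}}\,\delta\sum|\ddff u|^2$ are exactly what is used.

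Step (b), however, is a genuine gap. The Taylor-expansion framing is only heuristic (you need the inequality for all positive $\rho$, not just for small increments), and the proposed remedy---``summation by parts together with Young's inequality''---does not go through: applying Young to $(p+q)(p-q)^2$ produces a quartic term $(p-q)^4$ that you have no way to absorb. You are right that the pointwise inequality fails in the raw form and that the cyclic structure must be used, but you have not found \emph{how}.

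The paper's argument is fully explicit and does not rely on any perturbative expansion. Starting from the same expression you obtained, one expands the product $(\sqrt{\rho_+\rho_-}-\rho)\cdot\ddff(1/\rho)$ term by term and splits the result into two sums. The first is $\sum_\kappa(\sqrt{\rho_+\rho_-/\rho^2}-1)$, which is non-negative by Jensen's inequality for the exponential (here periodicity makes the average of the exponents vanish). The second sum, after a single index-shift symmetrization $\sum_\kappa\rho/\rho_+=\sum_\kappa\rho_-/\rho$, becomes the manifestly non-negative
\[
\tfrac12\sum_\kappa\Bigl(\bigl[\sqrt{\rho_+/\rho}-\sqrt{\rho/\rho_-}\bigr]^2+\bigl[\sqrt{\rho_-/\rho}-\sqrt{\rho/\rho_+}\bigr]^2\Bigr).
\]
The quantitative bound then follows from the elementary \emph{pointwise} inequality $(u-1/v)^2+(v-1/u)^2\ge 2(\log uv)^2$ for $uv>0$, proved via $(u/v+v/u)\ge2$ and $\sinh^2 x\ge x^2$; with $u=\sqrt{\rho_+/\rho}$, $v=\sqrt{\rho_-/\rho}$ one has $\log uv=\tfrac{\delta^2}{2}\ddff_\kappa\log\rho$. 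Note that this is the asymmetric extension of the inequality you checked only in the symmetric case $\rho_+=\rho_-$. So the cyclic summation is used not through integration by parts and Young, but through Jensen on one piece and an index shift on the other, after which everything is pointwise.
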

\begin{lemma}
    \label{lem:dDLSShell}
    For any two positive solutions $\rho$ and $\eta$ to \eqref{eq:dDLSS}, their distance $\dhell(\rho(t),\eta(t))$ is non-increasing in time. 
\end{lemma}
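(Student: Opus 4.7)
The plan is to exploit the alternative representation~\eqref{eq:dDLSS:savare} of the scheme, which (properly read with $\tfrac{\dn}{\dn t}\sqrt\rho$ on the left, as the direct discrete analogue of~\eqref{eq:dlssVH}) expresses \eqref{eq:dDLSS} in square-root variables. Setting $g_\kappa=\sqrt{\rho_\kappa}$ and $h_\kappa=\sqrt{\eta_\kappa}$, both satisfy
\begin{align*}
    \dot g_\kappa = -(\ddff\ddff g)_\kappa + \frac{(\ddff g)_\kappa^2}{g_\kappa},
\end{align*}
and analogously for $h$. Positivity of $\rho$ and $\eta$ along the flow, as provided by Lemma~\ref{lem:dDLSSpos}, makes these expressions classical solutions of a smooth ODE on $\dprbplus$, which is all the regularity one needs.

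Next I would differentiate $\dhell(\rho,\eta)^2=\delta\sum\nolimits_\kappa(g_\kappa-h_\kappa)^2$ in time and use the self-adjointness~\eqref{eq:ddffsymmetric} of $\ddff$ to integrate the biharmonic term by parts twice. This yields
\begin{align*}
    \tfrac12 \tfrac{\dn}{\dn t}\dhell(\rho,\eta)^2 = -\delta\sum\nolimits_\kappa \abs*{\ddff(g-h)_\kappa}^2 + \delta\sum\nolimits_\kappa (g_\kappa-h_\kappa)\left[\frac{(\ddff g)_\kappa^2}{g_\kappa} - \frac{(\ddff h)_\kappa^2}{h_\kappa}\right].
\end{align*}
The remaining task is to absorb the second sum into the (negative) first one.

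The heart of the argument is the pointwise algebraic inequality
\begin{align*}
    (a-b)\left(\frac{A^2}{a}-\frac{B^2}{b}\right)\le (A-B)^2 \qquad\text{for all $a,b>0$ and $A,B\in\R$,}
\end{align*}
which I would verify by direct expansion: after multiplying out and clearing denominators, the difference $(A-B)^2-(a-b)(A^2/a-B^2/b)$ collapses to the sum of squares $\bigl(\sqrt{b/a}\,A-\sqrt{a/b}\,B\bigr)^2\ge 0$. Applying this termwise with $a=g_\kappa$, $b=h_\kappa$, $A=(\ddff g)_\kappa$, $B=(\ddff h)_\kappa$ bounds the nonlinear remainder by $\delta\sum\nolimits_\kappa\abs*{\ddff(g-h)_\kappa}^2$, which cancels the biharmonic dissipation exactly. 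This concludes $\tfrac{\dn}{\dn t}\dhell(\rho(t),\eta(t))^2\le 0$. The main obstacle is identifying the correct pointwise inequality and its sum-of-squares proof; once that is in hand, the rest of the argument is the direct discrete transcription of the textbook contractivity proof based on~\eqref{eq:dlssVH} in the continuous setting.
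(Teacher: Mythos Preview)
Your proof is correct and follows essentially the same route as the paper's: both use the square-root representation~\eqref{eq:dDLSS:savare} (indeed with $\tfrac{\dn}{\dn t}\sqrt\rho$ on the left, as you note), differentiate $\dhell^2$, integrate the biharmonic term by parts via~\eqref{eq:ddffsymmetric}, and reduce the remaining nonlinear term to a pointwise sum of squares. Your inequality $(a-b)\bigl(A^2/a-B^2/b\bigr)\le(A-B)^2$ with remainder $\bigl(\sqrt{b/a}\,A-\sqrt{a/b}\,B\bigr)^2$ is exactly the identity the paper arrives at, which it writes in the form $\bigl(\sqrt[4]{\eta/\rho}\,\ddff\sqrt\rho-\sqrt[4]{\rho/\eta}\,\ddff\sqrt\eta\bigr)^2$.
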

By the gradient flow structure, solutions to \eqref{eq:dDLSS} dissipate the discretized entropy $\dent$. The rate of dissipation can be quantified, and this will be essential for the convergence proof in Section \ref{sct:dQDDconv}.
\begin{lemma}
    \label{lem:dDLSSent}
    Any positive solution to \eqref{eq:dDLSS} satisfies
    \begin{align}
        \label{eq:dissipation:bound}
	        -\frac{\dn}{\dd t}\dent(\rho) 
            =4\delta \sum\nolimits_\kappa \left(\frac{\sqrt{\rho_{\kappa+1}\rho_{\kappa-1}}-\rho_\kappa}{\delta^2}\right)\left(\frac{\log\sqrt{\rho_{\kappa+1}\rho_{\kappa-1}}-  \log \rho_\kappa}{\delta^2}\right)
	        \ge \delta\sum\nolimits_\kappa \big(\ddff\!\sqrt\rho\big)^2.
    \end{align}
\end{lemma}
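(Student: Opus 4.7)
The plan has two parts: an algebraic identity coming from the gradient flow structure, and a non-trivial functional inequality that is the main obstacle.

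\emph{Identity.} I would use the chain rule with \eqref{eq:derviv:dentropy} to write $\tfrac{\dn}{\dd t}\dent(\rho) = \langle\log\rho,\dot\rho\rangle_\delta$, then substitute the gradient flow form \eqref{eq:dDLSS:GF} and apply the self-adjointness \eqref{eq:ddffsymmetric} of $\ddff$ to obtain
\[
-\tfrac{\dn}{\dd t}\dent(\rho)
= \langle\ddff\log\rho,\mob(\rho)\ddff\log\rho\rangle_\delta
= \delta\sum\nolimits_\kappa\mob_\kappa(\rho)(\ddff\log\rho)_\kappa^2.
\]
The definition \eqref{eq:def:mob} of $\mob$ as a logarithmic mean shows that $\mob_\kappa(\rho)\cdot(\log\sqrt{\rho_{\kappa+1}\rho_{\kappa-1}}-\log\rho_\kappa) = \sqrt{\rho_{\kappa+1}\rho_{\kappa-1}}-\rho_\kappa$, while $\delta^2(\ddff\log\rho)_\kappa = 2(\log\sqrt{\rho_{\kappa+1}\rho_{\kappa-1}}-\log\rho_\kappa)$; combining these two observations collapses each summand to the expression claimed for the dissipation.

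\emph{Inequality.} Setting $u_\kappa := \sqrt{\rho_\kappa}$ and cancelling the common $\delta^{-4}$ factor, the claim becomes the $\delta$-free statement
\begin{equation*}
4\sum\nolimits_\kappa(u_+u_- - u^2)(\log u_+ + \log u_- - 2\log u) \;\ge\; \sum\nolimits_\kappa(u_+ + u_- - 2u)^2.
\end{equation*}
This is a discrete analogue of the Jüngel--Matthes estimate $\int_\crc\rho(\partial_{xx}\log\rho)^2\,\dd x \ge 4\int_\crc (\partial_{xx}\sqrt\rho)^2\,\dd x$, which in the continuum follows by combining the algebraic identity $\sqrt\rho\,(\partial_{xx}\log\rho)/2 = (\sqrt\rho)_{xx} - (\partial_x\sqrt\rho)^2/\sqrt\rho$ with the integration-by-parts relation $\int_\crc(\sqrt\rho)_{xx}(\partial_x\sqrt\rho)^2/\sqrt\rho\,\dd x = \tfrac13\int_\crc(\partial_x\sqrt\rho)^4/\rho\,\dd x$.

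\emph{Main obstacle.} The natural pointwise attempt, $(a-b)(\log a-\log b)\ge 4(\sqrt a-\sqrt b)^2$ applied to $(a,b)=(u_+u_-,u^2)$, gives only the lower bound $16\sum_\kappa(\sqrt{u_+u_-}-u)^2$, which fails pointwise against $\sum_\kappa(u_++u_--2u)^2$: already the triple $(u_-,u,u_+)=(u/4,u,4u)$ makes $\sqrt{u_+u_-}=u$ while $u_++u_-\ne 2u$. The deeper argument I would pursue rests on a discrete summation-by-parts playing the role of $\int u''(u')^2/u\,\dd x = \tfrac13\int(u')^4/u^2\,\dd x$: using the algebraic identity $u_+u_- - u^2 = \delta^2(u\ddff u - v v_-)$ with $v_\kappa=\partial_+^\delta u_\kappa$ together with the decomposition $u_++u_--2u=(\sqrt{u_+}-\sqrt{u_-})^2+2(\sqrt{u_+u_-}-u)$, the inequality reduces to dominating a sign-indefinite cross term of the form $\sum_\kappa(\sqrt{u_+}-\sqrt{u_-})^2(\sqrt{u_+u_-}-u)$ by the main quadratic part after a cyclic reindexing $\kappa\mapsto\kappa\pm 1$. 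Making this algebraic manoeuvre precise, and verifying that the resulting remainder is sign-definite, is the hard step and is precisely the ``discrete nonlinear functional inequality between integral expressions involving second order derivatives'' referenced in the introduction.
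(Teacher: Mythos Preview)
Your identity derivation is correct and matches the paper. The gap is in the inequality, which you rightly flag as the hard step but do not complete. The paper does not follow your summation-by-parts route mimicking the continuum identity $\int u''(u')^2/u = \tfrac13\int(u')^4/u^2$. Instead, with your notation $u_\kappa=\sqrt{\rho_\kappa}$, it adds to the left-hand side the specific telescoping null sum
\[
0 = \frac12\sum\nolimits_\kappa\left(\frac{(u_{\kappa+1}-u_\kappa)^3}{u_{\kappa+1}+u_\kappa}+\frac{(u_{\kappa-1}-u_\kappa)^3}{u_{\kappa-1}+u_\kappa}\right),
\]
chosen so that the resulting inequality becomes \emph{pointwise}: after dividing the $\kappa$th summand by $u_\kappa^2$ and setting $p=u_{\kappa+1}/u_\kappa$, $q=u_{\kappa-1}/u_\kappa$, one is reduced to
\[
(pq-1)\log(pq) + \frac12\Bigl(\frac{(p-1)^3}{p+1}+\frac{(q-1)^3}{q+1}\Bigr) \ge \frac14(p+q-2)^2
\qquad\text{for all }p,q>0.
\]
This two-variable inequality is then proven by the substitution $r=\sqrt{pq}$, $\sigma=\sqrt{p/q}+\sqrt{q/p}-2$ and a case analysis on $r$ in the three regimes $r\ge\sqrt e$, $0<r\le 1$, and $1<r<\sqrt e$.

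The device of adding a shift-antisymmetric cubic correction to force a pointwise estimate is the missing idea in your plan. Your alternative decomposition via $u_+u_--u^2=\delta^2(u\,\ddff u - v v_-)$ may also lead somewhere, but the sign-indefinite cross term you isolate will not close without a comparable shift trick, and the paper's route bypasses that difficulty entirely by absorbing the nonlocal coupling into the null sum before any estimation is attempted.
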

The continuum analog of the inequality has been crucial for proving existence to~\eqref{eq:DLSS},  see~\cite[Eq.(1.82)]{GianazzaSavareToscani2009} and \cite[Eq.(1.3)]{JuengelMatthes2008}.
Finally, we establish also a relation to the gradient flow formulation of~\eqref{eq:DLSS} in the $L^2$-Wasserstein metric given in \cite{GianazzaSavareToscani2009}. This implies en passant that the discretized Fisher information,
\begin{align}\label{eq:def:dfish}
   \dfish(\rho) := 2 \delta \sum\nolimits_\kappa \left(\frac{\sqrt{\rho_{\kappa+1}}-\sqrt{\rho_\kappa}}\delta\right)^2,
\end{align}
is another Lyapunov function for \eqref{eq:dDLSS}. We do not repeat the construction of $L^2$-Wasserstein distance on $\dprb$ from \cite{Maas2011}, but just recall that any symmetric concave function $\geom:\Rnn\times\Rnn\to\Rnn$ gives rise to such a discretized metric.
\begin{lemma}
    \label{lem:dDLSSfish}
    Let $\dwass$ be the discretized $L^2$-Wasserstein distance for $\geom(a,b)=\sqrt{ab}$ in the sense of~\cite{Maas2011}. Then the positive solutions to \eqref{eq:dDLSS} form a gradient flow of $\dfish$ with respect to $\dwass$, that is
    \begin{align*}%
    	\dot\rho = 
    	-\partial_-^\delta \bra*{\geom(\rho)\,\partial_+^\delta \bra*{\frac{\partial\dfish}{\partial\rho}}} = 2 \partial_-^\delta\bra*{ \sqrt{\rho \rho_{\smash+}} \; \partial_+^\delta\bra*{ \frac{\Delta^\delta \sqrt{\rho}}{\sqrt{\rho}}}}.
    \end{align*}
\end{lemma}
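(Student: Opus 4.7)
The statement contains two equalities: (i) the gradient-flow identification of \eqref{eq:dDLSS} for $\dfish$ in the $L^2$-Wasserstein metric of \cite{Maas2011} with edge mobility coming from $\geom(a,b)=\sqrt{ab}$, and (ii) the explicit closed form $2\partial^\delta_-(\sqrt{\rho\rho_+}\partial^\delta_+(\ddff\sqrt{\rho}/\sqrt{\rho}))$ of that flow. I would separate the two computations.

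For the variational derivative (which gives the second equality once the first is in place), I would write $u := \sqrt{\rho}$ and differentiate the defining sum of $\dfish$ with respect to $\rho_\lambda$ through the chain $\sqrt{\rho_\lambda}$. A single discrete summation by parts collects the contributions from the two adjacent $\partial^\delta_+$ terms into the stencil of $\ddff u = w$, giving the Riesz representation
\[
\dfish'(\rho)_\lambda \;=\; -\,\frac{2\,\ddff\sqrt{\rho_\lambda}}{\sqrt{\rho_\lambda}},
\]
which is the direct analogue of the continuum $\fish'(\rho) = -2\partial_{xx}\sqrt{\rho}/\sqrt{\rho}$ appearing in \eqref{eq:dlssVW}. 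Substituting into $-\partial^\delta_-(\geom(\rho)\partial^\delta_+\,\cdot\,)$ and pulling out the factor $2$ yields the second equality immediately.

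For the matching with \eqref{eq:dDLSS}, the cleanest route rests on the pointwise telescoping identity
\[
\sqrt{\rho_\kappa\rho_{\kappa+1}}\,\bigl(v_{\kappa+1} - v_\kappa\bigr) \;=\; u_\kappa w_{\kappa+1} - u_{\kappa+1} w_\kappa, \qquad v := w/u,
\]
which collapses the geometric-mean weight against the difference of the ratio $w/u$. Applying $\partial^\delta_-$ to this two-point flux and regrouping produces the combination $u_\kappa(w_{\kappa+1}+w_{\kappa-1}) - w_\kappa(u_{\kappa+1}+u_{\kappa-1})$. Completing the three-point Laplacian stencil on each pair and using $\ddff u = w$, this rearranges into $\delta^2\bigl(u_\kappa\,\ddff w_\kappa - w_\kappa^{\,2}\bigr)$. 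Separately, using $\sqrt{\rho_+\rho_-} = u_+ u_-$, I would expand $\dot\rho = -2\ddff\bigl((u_+u_- - u^2)/\delta^2\bigr)_\kappa$ directly as a five-point stencil in $u$ and verify that it agrees with $2\bigl(u_\kappa\,\ddff w_\kappa - w_\kappa^{\,2}\bigr)$ by matching coefficients on the terms $u_\kappa(u_{\kappa\pm 2})$, $(u_{\kappa\pm 1})^2$, $u_{\kappa+1}u_{\kappa-1}$, and $u_\kappa^{\,2}$.

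\textbf{Main obstacle.} The non-trivial work is the algebraic bookkeeping linking the Wasserstein flux form to the second-order stencil of \eqref{eq:dDLSS}: one must show that the four-term first-order divergence, after telescoping, reassembles into the same five-point polynomial in $u$ produced by the outer $\ddff$ in \eqref{eq:dDLSS}. A shortcut worth noting is that, on $\dprbplus$, the two gradient-flow identities for \eqref{eq:dDLSS} (via $\dent/\ddst_\mob$ in \eqref{eq:dDLSS:GF} and via $\dfish/\dwass$ here) produce identical right-hand sides iff $\ddff(\mob(\rho)\ddff\log\rho)_\kappa = \partial^\delta_-(\sqrt{\rho\rho_+}\,\partial^\delta_+(\dfish'(\rho)))_\kappa$ pointwise; invoking the already-established $\mob(\rho)\ddff\log\rho = 2(\sqrt{\rho_+\rho_-}-\rho)/\delta^2$ reduces the verification to a clean finite-difference identity that can be checked in the shared five-point stencil.
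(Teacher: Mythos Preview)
Your plan is correct and will work. The variational derivative computation and the telescoping identity $\sqrt{\rho_\kappa\rho_{\kappa+1}}(v_{\kappa+1}-v_\kappa)=u_\kappa w_{\kappa+1}-u_{\kappa+1}w_\kappa$ are exactly the right ingredients, and the subsequent reduction to $u_\kappa\ddff_\kappa w - w_\kappa^2$ followed by a five-point stencil match goes through.

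The paper's proof, however, is shorter because it avoids the five-point expansion entirely. After writing $\partial_{\rho_\lambda}\dfish=2\delta^{-2}\bigl(2-(\sqrt{\rho_{\lambda+1}}+\sqrt{\rho_{\lambda-1}})/\sqrt{\rho_\lambda}\bigr)$, the paper multiplies the difference $\partial_{\rho_{\kappa+1}}\dfish-\partial_{\rho_\kappa}\dfish$ by $\geom(\rho_{\kappa+1},\rho_\kappa)=\sqrt{\rho_{\kappa+1}\rho_\kappa}$ and observes that the result is already a forward first difference of the DLSS inner flux:
\[
\geom(\rho_{\kappa+1},\rho_\kappa)\bigl[\partial_{\rho_{\kappa+1}}\dfish-\partial_{\rho_\kappa}\dfish\bigr]
= -2\delta^{-2}\Bigl[\bigl(\sqrt{\rho_{\kappa+2}\rho_\kappa}-\rho_{\kappa+1}\bigr)-\bigl(\sqrt{\rho_{\kappa+1}\rho_{\kappa-1}}-\rho_\kappa\bigr)\Bigr].
\]
Applying the outer $\partial_-^\delta$ to a first difference immediately yields $\ddff(\sqrt{\rho_+\rho_-}-\rho)$, which is \eqref{eq:dDLSS}. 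So the paper never leaves the four-point flux level, whereas you pass through the intermediate expression $u\,\ddff w - w^2$ and then verify a five-point polynomial identity. Your intermediate form is in fact (up to the factor $2\sqrt\rho$) the right-hand side of \eqref{eq:dDLSS:savare}, whose equivalence with \eqref{eq:dDLSS} the paper already records; so you could shortcut your final matching step by citing that remark instead.
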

A posteriori, we derive some universal bounds on the decay of $\dent$ and $\dfish$.
\begin{lemma}\label{lem:uniform:time:bounds}
	Let $C_{\textup{LSI}}$ the logarithmic \change{Sobolev} constant in~\eqref{eq:LSI:d}, which can be chosen uniform in $\delta$, then any solution $\rhon$ \change{constructed in Proposition~\ref{prp:dQDDx1}} satisfies, for each $t>0$,
	\begin{align}
		\label{eq:unident}
		\dent(\rhon(t)) &\le 4 C_{\textup{LSI}}^2 t^{-1} , \\
		\label{eq:unidfish}
		\dfish(\rhon(t)) &\le 8 C_{\textup{LSI}} t^{-1}, \\
		\label{eq:unidest}
		\int_t^\infty \delta\sum\nolimits_\kappa \big(\ddff_\kappa\sqrt{\rho(s)}\big)^2\dd s &\le 4 C_{\textup{LSI}}^2 t^{-1}.
	\end{align}
	Likewise, for every $r > 0$, there exists an explicit constant $C_r>0$, such that any solution $\rhon$ \change{constructed in Proposition~\ref{prp:dQDDx1}} satisfies
		\begin{equation}\label{eq:scheme:shorttime}
			\dent(\rhon(t)) \leq \change{(C_r t)^{-1/r} } \qquad\text{ for all } t > 0 . 
		\end{equation}
\end{lemma}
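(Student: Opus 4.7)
The plan is to prove the four estimates in sequence, each building on the previous ones. The core engine is the entropy-dissipation identity from Lemma~\ref{lem:dDLSSent}, namely $-\tfrac{\dn}{\dn t}\dent(\rho) \ge \delta\sum_\kappa\bigl(\ddff_\kappa\sqrt\rho\bigr)^2$, and the availability of a uniform-in-$\delta$ logarithmic Sobolev inequality $\dent(\rho)\le C_{\textup{LSI}}\,\dfish(\rho)$.

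First I would establish the intermediate comparison $\dfish(\rho)^2/4 \le \delta\sum_\kappa(\ddff_\kappa\sqrt\rho)^2$. Summation by parts with periodic boundary conditions gives $\tfrac12\dfish(\rho) = \delta\sum_\kappa(\partial^\delta_+\sqrt\rho)^2 = -\delta\sum_\kappa\sqrt\rho\,\ddff\sqrt\rho$, and Cauchy--Schwarz together with $\|\sqrt\rho\|_{L^2_N}^2=1$ yields the claim. Combining this with Lemma~\ref{lem:dDLSSent} and the LSI gives the ODE differential inequality
\begin{equation*}
-\frac{\dn}{\dn t}\dent(\rho) \;\ge\; \tfrac14 \dfish(\rho)^2 \;\ge\; \frac{\dent(\rho)^2}{4C_{\textup{LSI}}^2},
\end{equation*}
which after separation of variables and integration yields \eqref{eq:unident}. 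Estimate \eqref{eq:unidest} then follows immediately by integrating Lemma~\ref{lem:dDLSSent} from $t$ to $\infty$ and using the non-negativity of $\dent$.

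For \eqref{eq:unidfish} I would use that $\dfish$ is non-increasing along the flow (which is guaranteed by the Wasserstein gradient-flow structure from Lemma~\ref{lem:dDLSSfish}). Monotonicity yields $(t/2)\dfish(\rho(t)) \le \int_{t/2}^t \dfish(\rho(s))\dn s$, and applying Cauchy--Schwarz together with $\dfish(\rho(s))\le 2\sqrt{-\tfrac{\dn}{\dn s}\dent(\rho(s))}$ gives $\int_{t/2}^t\dfish(\rho(s))\dn s \le 2\sqrt{t/2}\,\sqrt{\dent(\rho(t/2))}$. Inserting \eqref{eq:unident} at time $t/2$ yields exactly the bound $8C_{\textup{LSI}}/t$.

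Finally, for \eqref{eq:scheme:shorttime} I would exploit the short-time regularization provided by \eqref{eq:unidfish}. The pointwise inequality $x(\log x-1)+1\le 1+r\,x^{1+1/r}$, valid for all $x\ge0$ and $r>0$ (coming from $\log x\le r\,x^{1/r}$ on $x\ge 1$), together with $\delta\sum_\kappa\rho_\kappa^{1+1/r}\le \|\rho\|_{L^\infty_N}^{1/r}$, reduces the matter to bounding $\|\rho\|_{L^\infty_N}$. A uniform-in-$\delta$ discrete one-dimensional Sobolev embedding $\|\sqrt\rho\|_{L^\infty_N}^2 \le C_S(\|\sqrt\rho\|_{L^2_N}^2 + \|\partial^\delta_+\sqrt\rho\|_{L^2_N}^2) = C_S(1+\dfish(\rho)/2)$ together with \eqref{eq:unidfish} then gives $\dent(\rho(t)) \le 1 + r\,C_S^{1/r}\bigl(1+4C_{\textup{LSI}}/t\bigr)^{1/r}$, from which the claim follows after splitting into the cases $t\le 4C_{\textup{LSI}}$ and $t\ge 4 C_{\textup{LSI}}$ and absorbing all $r$-dependence into a constant $C(r)$. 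The main technical obstacle is ensuring that both the LSI constant $C_{\textup{LSI}}$ and the discrete Sobolev constant $C_S$ are independent of $\delta$; for the latter, the standard 1D argument $|f(x)-\bar f|^2\le C\|\partial^\delta_+ f\|_{L^2_N}^2$ (provable via a discrete fundamental theorem of calculus and Cauchy--Schwarz) suffices and transfers verbatim to the discretized torus.
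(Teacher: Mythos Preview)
Your arguments for \eqref{eq:unident}, \eqref{eq:unidfish}, and \eqref{eq:unidest} coincide with the paper's (the paper invokes Lemma~\ref{lem:convsob1} for the step $\dfish^2/4\le\delta\sum(\ddff\sqrt\rho)^2$, which is precisely your Cauchy--Schwarz computation; for \eqref{eq:unidfish} the paper integrates $\dfish^2$ rather than $\dfish$, but the outcome and constant are identical).

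Your route to \eqref{eq:scheme:shorttime}, however, is genuinely different from the paper's. The paper argues via a differential inequality: it applies the discrete Gagliardo--Nirenberg inequality (Lemma~\ref{lem:dgni}) with exponent $p=2+\tfrac{2}{r+1}$ to $\sqrt{\rhon}$, combines it with the Poincar\'e inequality and the dissipation bound \eqref{eq:dissipation:bound} to obtain $-C_{\textup{PI}}\tfrac{\dn}{\dn t}(\dent-1)\ge(\dent-1)^{r+1}-1$, and then integrates this on the set where the right-hand side is not small. You instead bootstrap from \eqref{eq:unidfish}: the Fisher bound feeds into the discrete $H^1\hookrightarrow L^\infty$ embedding to control $\|\rhon\|_{L^\infty_N}$, and a direct pointwise estimate on the entropy integrand finishes. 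Your path is shorter and more elementary---no Gagliardo--Nirenberg, no differential inequality to solve---at the cost of making \eqref{eq:scheme:shorttime} logically dependent on \eqref{eq:unidfish}, whereas the paper's argument for \eqref{eq:scheme:shorttime} is self-contained from the dissipation identity. Both are valid; your version is arguably cleaner for this statement.
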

We conclude this section by pointing out yet another variational structure of our discretization.
\begin{remark}[Generalized gradient structures]\label{rem:GGF}
	The heuristic derivation from~\eqref{eq:heuristic:derivation} suggests that the scheme~\eqref{eq:dDLSS} has a generalized gradient structure~\cite{MielkePeletierRenger2014,PRST22,HoeksemaTse2023,Hoeksema2023}. More specifically, we show that the scheme~\eqref{eq:dDLSS:GF} has a gradient structure in continuity equation format~\cite[Definition~1.1]{PeletierSchlichting2022} with building blocks $([N],[N],\ddff,\dent,\mathcal{R}^\delta)$. Hereby, the role of the continuity equation on the discretized torus $[N]$ is the discrete second order equation~\eqref{eq:dcont999}: $\dot\rho_\kappa^s = \ddff_\kappa\welo^s$ and we identitfy vertices and edges in the present setting. 
	
	The diffusive flux $\welo$ to arrive at~\eqref{eq:dDLSS} can be rewritten as
	\[
	  \welo = -\frac{2}{\delta^2} \bra*{\sqrt{\rho_+ \rho_-} -\rho}
	  =
	  -\frac{2}{\delta^2}
	  \rho \pra*{ \exp\bra*{ \frac{\delta^2}{2} \ddff \log \rho}-1} .
	\]
	The last identity can be encoded through the kinetic relation driven by the force $\ddff (\dent)'$ as
	\[
	   \welo = \mathrm{D}_2 \mathcal{R}^{\delta,*}\bra*{\rho,-\ddff (\mathcal{H}^\delta)'(\rho)}
	\]
	where the dual dissipation potential is given by 
	\begin{equation*}%
		\mathcal{R}^{\delta,*}(\rho,\xi) :=   \delta\sum\nolimits_\kappa  \rho_\kappa \frac{2}{\delta^2}\pra*{\frac{2}{\delta^2} \bra*{\exp\bra*{-\frac{\delta^2}{2} \xi_\kappa}-1}+\xi_\kappa} .
	\end{equation*}
	In our setting, we use as pairing between forces and fluxes $\skp{\xi,\welo}_{\delta} = \delta\sum\nolimits_\kappa \xi_\kappa \welo_\kappa$, which also formally passes to the limit $\delta\to 0$. Therewith, the primal dissipation functional as pri-dual of $\mathcal{R}^{\delta,*}$ defined by $\mathcal{R}^{\delta}(\rho,\welo) = \sup_{\xi} \set*{ \skp{\xi,\welo}_{\delta} - \mathcal{R}^{\delta,*}(\rho,\xi)}$ takes the explicit form
 \begin{equation}\label{eq:def:R}
		\mathcal{R}^{\delta}(\rho,\welo) = \delta \sum\nolimits_\kappa\rho_\kappa  
  \eta\bra*{1-\frac{\delta^2}{2}\frac{\welo_\kappa}{\rho_\kappa}} \quad\text{with }  \eta(s) = s \log s - s  +1,
	\end{equation}	
	provided that $\rho_\kappa \geq \frac{\delta^2}{2} \welo_\kappa$ for $\kappa\in [N]$ and $+\infty$ else. 
    We note that we have the expansion
    \[
    \rho_\kappa  
  \eta\bra*{1-\frac{\delta^2}{2}\frac{\welo_\kappa}{\rho_\kappa}}  = \frac{\welo_\kappa^2}{2\rho_\kappa}+ O(\delta) .
    \]
	Hence, we formally get that~\eqref{eq:def:R} provides a discrete approximation for the continuous action density~\eqref{eq:def:action_density}. Indeed, provided that $\rho^\delta \to \rho$ and $\welo^\delta \to \welo$ in some suitable way, we expect that
	\[
	\mathcal{R}^{\delta}(\rho^\delta,\welo^\delta) \to \frac{1}{2} \act(\rho,\welo)\qquad\text{ as } \delta\to 0. 
	\]
\end{remark}

\subsection{Proof of Lemma \ref{lem:dDLSSpos}}
The differential equation \eqref{eq:dDLSS} has a locally Lipschitz continuous right-hand side, so by the Picard-Lindel\"of theorem, there exist a time horizon $T\in\Rp\cup\{+\infty\}$ and a continuously differentiable curve $\rho:[0,T)\to\Rp^N$ such that $\rho$ is the unique solution to the corresponding initial value problem. And moreover, unless $T=+\infty$, there exists a sequence $t_k\uparrow T$ such that $\rho(t_k)$ escapes $\dprbplus$, that is $\rho_\kappa(t_k)\to0$ or $\rho_\kappa(t_k)\to\infty$ as $k\to\infty$ for some index $\kappa$. Finally, by differentiability in time and periodicity in $\kappa$, it follows that
\begin{align*}
	\frac{\dn}{\dd t} \delta\sum\nolimits_\kappa\rho_\kappa 
		= - 2\delta\sum\nolimits_\kappa \ddff\biggl(\frac{\sqrt{\rho_{\smash+}\rho_-}-\rho_0}{\delta^2}\biggr)
		= 0,
\end{align*}
hence $\delta\sum\nolimits_\kappa\rho_\kappa(t)=1$ for all $t\in[0,T)$, and consequently $\rho(t)\in\dprbplus$. 
	
To prove that actually $T=+\infty$, i.e. that $\rho$ is a global solution, it suffices to derive a finite upper and a positive lower bound on each component $\rho_\kappa(t)$. The bound above follows immediately from $\rho(t)\in\dprb$:
\begin{align*}
	\delta\rho_\kappa(t) 
	\le \delta\sum\nolimits_{\kappa'}\rho_{\kappa'}(t) = 1 
	\quad\Rightarrow\quad \rho_\kappa(t)\le N.
\end{align*}
The lower bound will follow from monotonicity of $\dlyp$. Indeed, provided that $\dlyp(\rho(t)\change{)}\le\dlyp(\rho(0))$, we then may conclude that
\[ -\delta\log\rho_\kappa(t) 
	= \delta\sum_{\kappa'\neq\kappa}\log \rho_{\kappa'}(t) + \dlyp(\rho(t))
	\le \frac1e + \dlyp(\rho(0)), \]
where we have used that, by Jensen's inequality, for any $\rho\in\dprb$
\begin{align*}
		\delta\sum_{\kappa'\neq\kappa}\log\rho_{\kappa'}
		&= \frac{N-1}N \biggl(\frac1{N-1}\sum_{\kappa'\neq\kappa}\log\rho_{\kappa'}\biggr)
		\le \frac{N-1}N \log\biggl(\frac1{N-1}\sum_{\kappa'\neq\kappa}\rho_{\kappa'}\biggr)\\
		&\le \frac{N-1}N \log\biggl(\frac{N}{N-1}\delta\sum\nolimits_\kappa \rho_\kappa\biggr)
		\le \frac{N-1}{N}\log\frac{N}{N-1} \le \frac1e.
\end{align*}
To prove the desired monotonicity of $\dlyp(\rho(t))$, we compute its time derivative:
\begin{align}
		\nonumber
		-\frac{\dn}{\dd t}\dlyp(\rho)
		&=\delta\sum\nolimits_\kappa \frac{\dot\rho}{\rho} 
		= -2\delta \sum\nolimits_\kappa \frac1\rho\,\ddff\biggl(\frac{\sqrt{\rho_{\smash+}\rho_{\smash-}}-\rho}{\delta^2}\biggr)
		= -2\delta\sum\nolimits_\kappa \frac{\sqrt{\rho_{\smash+}\rho_{\smash-}}-\rho}{\delta^2} \ddff\biggl(\frac1\rho\biggr) \\
		\nonumber
		&= -2\frac{\delta}{\delta^4}\sum\nolimits_\kappa \big(\sqrt{\rho_{\smash+}\rho_{\smash-}}-\rho\big) \left(\frac1{\rho_+}+\frac1{\rho_-}-\frac2{\rho}\right) \\
		\label{eq:logsum}
		&= 4\delta^{-3}\sum\nolimits_\kappa \left(
		\sqrt{\frac{\rho_{\smash+}\rho_-}{\rho^2}}-1\right)
		+2\delta^{-3}\sum\nolimits_\kappa\left(
		\frac{\rho}{\rho_+}+\frac{\rho}{\rho_-}
		-\sqrt{\frac{\rho_+}{\rho_-}}
		-\sqrt{\frac{\rho_-}{\rho_+}}
		\right).
\end{align}
Above, we have used the symmetry of the discrete Laplacian, see \eqref{eq:ddffsymmetric}. For estimation of the first sum in \eqref{eq:logsum} above, we apply Jensen's inequality for sums with the convex exponential function,
\begin{align*}
		\delta\sum\nolimits_\kappa
		\sqrt{\frac{\rho_+\rho_-}{\rho^2}}
		&=\frac2N\sum\nolimits_\kappa \exp\bra*{\frac12\bigl(\log\sqrt{\rho_+}+\log\sqrt{\rho_-}-2\log\sqrt{\rho}\bigr)} \\
		&\ge \exp\biggl(\frac1N\sum\nolimits_\kappa\bigl(\log\sqrt{\rho_+}+\log\sqrt{\rho_-}-2\log\sqrt{\rho}\bigr)\biggr)
		= \exp(0) = 1,
\end{align*}    
where we have used that the sum in the exponent vanishes, again thanks to periodicity. Consequently, the first sum in \eqref{eq:logsum} is non-negative. Exploiting periodicity once again, we can symmetrize the expression in the second sum in \eqref{eq:logsum} as follows:
\begin{align*}
		\delta\sum\nolimits_\kappa \left(\frac{\rho}{\rho_+}+\frac{\rho}{\rho_-}\right)
		= \frac\delta2\sum\nolimits_\kappa \left(
		\frac{\rho}{\rho_+}+\frac{\rho_-}{\rho}+
		\frac{\rho_+}{\rho}+\frac{\rho}{\rho_-}\right)
\end{align*}
Consequently,
\begin{align*}
	 \MoveEqLeft\delta\sum\nolimits_\kappa\left(
		\frac{\rho}{\rho_+}+\frac{\rho}{\rho_-}
		-\sqrt{\frac{\rho_+}{\rho_-}}
		-\sqrt{\frac{\rho_-}{\rho_+}}
		\right)\\
		&=\frac\delta2\sum\nolimits_\kappa\left(
		\frac{\rho}{\rho_+}+\frac{\rho_-}{\rho}+
		\frac{\rho_+}{\rho}+\frac{\rho}{\rho_-}
		-2\sqrt{\frac{\rho_+}{\rho}}\sqrt{\frac{\rho}{\rho_-}}
		-2\sqrt{\frac{\rho_-}{\rho}}\sqrt{\frac{\rho}{\rho_+}}
		\right) \\
		&=\frac\delta2\sum\nolimits_\kappa\bra[\bigg]{
		\left[\sqrt{\frac{\rho_+}{\rho}}-\sqrt{\frac{\rho}{\rho_-}}\right]^2
		+\left[\sqrt{\frac{\rho_-}{\rho}}-\sqrt{\frac{\rho}{\rho_+}}\right]^2}
		\ge0.
\end{align*}
Thus the sum in \eqref{eq:logsum} is non-negative, and so $t\mapsto\dlyp(\rho(t))$ is monotonically decreasing and satisfies
\begin{equation*}
	\dlyp(\rho(t)) + \int_0^t \delta\sum\nolimits_\kappa \frac{1}{\delta^4}\bra[\bigg]{
	\left[\sqrt{\frac{\rho_+}{\rho}}-\sqrt{\frac{\rho}{\rho_-}}\right]^2
	+\left[\sqrt{\frac{\rho_-}{\rho}}-\sqrt{\frac{\rho}{\rho_+}}\right]^2} \dd s \leq \dlyp(\rho(0)), \qquad\text{for any } t>0. 
\end{equation*}
The proof of~\eqref{eq:dlyp:diss} is finished by setting $u:=\sqrt{\rho_+/\rho}$ and $v:=\sqrt{\rho_-/\rho}$, once we show the elementary inequality
\begin{equation}\label{eq:elementary:log}
	\bra*{u-\frac{1}{v}}^2 + \bra*{ v - \frac{1}{u}}^2 \geq 2 \bra*{\log uv}^2,  \qquad\text{for any } u v > 0.
\end{equation}
By expanding the square and regrouping the elements, we get for the right-hand side the identity
\begin{align*}
    \bra*{u-\frac{1}{v}}^2 + \bra*{ v - \frac{1}{u}}^2 = \bra*{\frac{u}{v}+\frac{v}{u}} \bra*{ uv -2 + \frac{1}{uv}} \geq 8 \abs*{\sinh \bigl( \log \sqrt{uv}\bigr)}^2 \,,
\end{align*}
where we used the elementary bound $a+1/a\geq 2$ for any $a>0$ in the estimate. By recalling, that $\abs*{\sinh x}^2 \geq x^2$ for any $x\in \R$, the proof of~\eqref{eq:elementary:log} is concluded.

To prove the estimate~\eqref{eq:sup:lyp:bound}, we use Jensens inequality and the Poincaré inequality~\eqref{eq:PI:N} from Proposition~\ref{prop:PI-LSI} to estimate
\begin{align*}
	\abs*{ \log \rho^\delta_\kappa - \log \rho^\delta_\lambda}^2 \leq 
	\pra[\bigg]{\delta\sum\nolimits_\kappa \abs*{ \partial^\delta_+ \log \rho^\delta} }^2
	\leq \delta \sum\nolimits_\kappa\abs*{ \partial^\delta_+ \log \rho^\delta}^2 
	\leq C_{\textup{PI}} \delta \sum\nolimits_\kappa\abs*{ \Delta^\delta_\kappa \log \rho^\delta}^2,
\end{align*}
finishing the proof of Lemma~\ref{lem:dDLSSpos}.

\subsection{Proof of Lemma \ref{lem:dDLSShell}}
Let $\rho$ and $\eta$ be two positive solutions to \eqref{eq:dDLSS}. To show that $\dhell(\rho(t),\eta(t))$ is non-increasing in time, we prove non-positivity of its time derivative, using the alternative representation \eqref{eq:dDLSS:savare} and the symmetry of $\ddff$:
\begin{align*}
    \MoveEqLeft -\frac12\frac{\dd}{\dn t}\delta\sum_\kappa\abs[\big]{\sqrt{\rho_\kappa}-\sqrt{\eta_\kappa}}^2
    = -\delta\sum_\kappa\bigl(\sqrt{\rho_\kappa}-\sqrt{\eta_\kappa}\bigr)\biggl(\change{\frac{\dd}{\dn t}\sqrt{\rho_\kappa}-\frac{\dd}{\dn t}\sqrt{\eta_\kappa}}\biggr) \\
    &= \delta\sum_\kappa \bigl(\sqrt{\rho_\kappa}-\sqrt{\eta_\kappa}\bigr)\ddff_\kappa\ddff\big(\sqrt\rho-\sqrt\eta\big) - \delta\sum_\kappa \bigl(\sqrt{\rho_\kappa}-\sqrt{\eta_\kappa}\bigr)\biggl(\frac{\bigl(\ddff_\kappa\sqrt\rho\bigr)^2}{\sqrt{\rho_\kappa}}-\frac{\bigl(\ddff_\kappa\sqrt\eta\bigr)^2}{\sqrt{\eta_\kappa}}\biggr) \\
    &= \delta\sum_\kappa \Bigl(\ddff_\kappa\bigl(\sqrt\rho-\sqrt\eta\bigr)\Bigr)^2 - \delta\sum_\kappa\Bigl(\bigl(\ddff_\kappa\sqrt\rho\bigr)^2+\bigl(\ddff_\kappa\sqrt\eta\bigr)^2\Bigr) \\
    &\qquad + \delta\sum_\kappa \biggl(\sqrt{\frac{\eta_\kappa}{\rho_\kappa}}\bigl(\ddff_\kappa\sqrt\rho\bigr)^2+\sqrt{\frac{\rho_\kappa}{\eta_\kappa}}\bigl(\ddff_\kappa\sqrt\eta\bigr)^2\biggr) \\
	&\change{= \delta\sum_\kappa \biggl(\sqrt{\frac{\eta_\kappa}{\rho_\kappa}}\bigl(\ddff_\kappa\sqrt\rho\bigr)^2- 2\ddff_\kappa\sqrt\rho \, \ddff_\kappa\sqrt\eta  + \sqrt{\frac{\rho_\kappa}{\eta_\kappa}}\bigl(\ddff_\kappa\sqrt\eta\bigr)^2\biggr)} \\
    &= \delta\sum_\kappa \biggl(\sqrt[4]{\frac{\eta_\kappa}{\rho_\kappa}}\ddff_\kappa\sqrt\rho-\sqrt[4]{\frac{\rho_\kappa}{\eta_\kappa}}\ddff_\kappa\sqrt\eta\biggr)^2.
\end{align*}
This concludes the proof of Lemma \ref{lem:dDLSShell}.

\subsection{Proof of Proposition \ref{prp:dQDDx1}}
By Lemma \ref{lem:dDLSSpos}, there is a unique solution $\rho\in C^1(\Rnn,\dprbplus)$ for each initial datum $\rho^0\in\dprbplus$. By classical ODE arguments, the corresponding flow defines a continuous semigroup $\sgrp_+^{\smash[t]{(\cdot)}}$ on $\dprbplus$. By the contractivity property from Lemma \ref{lem:dDLSShell}, at each $t\ge0$, the map~$\sgrp_+^t$ is a globally 1-Lipschitz with respect to $\dhell$. Therefore, there is a unique 1-Lipschitz extension of~$\sgrp_+^t$ to the $\dhell$-closure of $\dprbplus$, which is all of $\dprb$. It is easily checked that the so-defined map~$\sgrp^{\smash{(\cdot)}}:\Rnn\times\dprb\to\dprb$ is continuous and inherits the semigroup property from $\sgrp_+^{\smash[t]{(\cdot)}}$.

\subsection{Proof of Lemma \ref{lem:dDLSSent}}
By taking the time derivative of $\dent$ along a positive solution to \eqref{eq:dDLSS}, we obtain by using the symmetry \eqref{eq:ddffsymmetric} of $\ddff$:
\begin{align*}
    -\frac{\dn}{\dd t}\dent(\rho) 
    &= -\delta\sum\nolimits_\kappa \dot\rho_\kappa\,\log\rho_\kappa
    = 4\delta\sum\nolimits_\kappa \ddff_\kappa\left(\frac{\sqrt{\rho_+\rho_-}-\rho}{\delta^2}\right)\,\log\sqrt{\rho_\kappa} \\
    &= 4\delta^{-1}\sum\nolimits_\kappa \big(\sqrt{\rho_{\kappa+1}\rho_{\kappa-1}}-\rho_\kappa\big)\ddff_\kappa\log\sqrt{\rho_\kappa} \\
    &= 4\delta\sum\nolimits_\kappa \frac{\sqrt{\rho_{\kappa+1}\rho_{\kappa-1}}-\rho_\kappa}{\delta^2} \frac{\log\sqrt{\rho_{\kappa+1}\rho_{\kappa-1}}-  \log \rho_\kappa}{\delta^2}.
\end{align*}
To prove \eqref{eq:dissipation:bound} is thus equivalent to establish the inequality
\begin{align}
    \label{eq:ineq1}
		\sum\nolimits_\kappa \big(\sqrt{\rho_{\kappa+1}\rho_{\kappa-1}}-\rho_\kappa\big) \big(\log\sqrt{\rho_{\kappa+1}\rho_{\kappa-1}}-\log \rho_\kappa\big)
    	\ge \frac14\sum\nolimits_\kappa \big(\sqrt{\rho_{\kappa+1}}+\sqrt{\rho_{\kappa-1}}-2\sqrt{\rho_\kappa}\big)^2
\end{align}
for all $\rho\in\dprbplus$. We change the expression in the sum on the left-hand side by addition of  
\begin{align*}
	0 = \frac12\sum\nolimits_\kappa \left(\frac{(\sqrt{\rho_{\kappa+1}}-\sqrt{\rho_\kappa})^3}{\sqrt{\rho_{\kappa+1}}+\sqrt{\rho_\kappa}}+\frac{(\sqrt{\rho_{\kappa-1}}-\sqrt{\rho_\kappa})^3}{\sqrt{\rho_{\kappa-1}}+\sqrt{\rho_\kappa}}\right),
\end{align*}
which vanishes because of the periodic boundary conditions. After elementary manipulations, \eqref{eq:ineq1} becomes
\begin{align*}
    &\sum\nolimits_\kappa \rho_\kappa\left[\left(\sqrt{\frac{\rho_{\kappa+1}}{\rho_\kappa}}\sqrt{\frac{\rho_{\kappa-1}}{\rho_\kappa}}-1\right) \log\left(\sqrt{\frac{\rho_{\kappa+1}}{\rho_\kappa}}\sqrt{\frac{\rho_{\kappa-1}}{\rho_\kappa}}\right)+\frac{\big(\sqrt{\rho_{\kappa+1}/\rho_\kappa}-1\big)^3}{2\big(\sqrt{\rho_{\kappa+1}/\rho_\kappa}+1\big)}+\frac{\big(\sqrt{\rho_{\kappa-1}/\rho_\kappa}-1\big)^3}{2\big(\sqrt{\rho_{\kappa-1}/\rho_\kappa}+1\big)}\right] \\
	&\ge \frac14\sum\nolimits_\kappa \rho_\kappa\left[\sqrt{\frac{\rho_{\kappa+1}}{\rho_\kappa}}+\sqrt{\frac{\rho_{\kappa-1}}{\rho_\kappa}}-2\right]^2. %
\end{align*}
Validity of this inequality between sums for all $\rho\in\dprbplus$ follows from the respective inequality between corresponding addends in the sums, that is
\begin{align}
  \label{eq:themonster}
  (uv-1)\log(uv) + \frac12\left(\frac{(u-1)^3}{u+1}+\frac{(v-1)^3}{v+1}\right) \ge \frac14(u+v-2)^2
\end{align}
for all real numbers $u,v>0$. Since
\begin{align*}
  \frac{(u-1)^3}{u+1}
  = u^2- 4u+7 - \frac8{u+1},
\end{align*}
and similarly for $v$ in place of $u$,
we have that
\begin{align*}
  \frac{(u-1)^3}{u+1}+\frac{(v-1)^3}{v+1}
  &= u^2+v^2-4u-4v+14  - \frac8{u+1}  - \frac8{v+1} \\
  &= (u+v-2)^2 - 2 uv + 10 - 8\frac{u+v+2}{(u+1)(v+1)} \\
  &= (u+v-2)^2 - 2(uv-1) + 8\frac{uv-1}{uv+u+v+1} ,
\end{align*}
and consequently, \eqref{eq:themonster} is equivalent to
\begin{align}
  \label{eq:themonster2}
  (uv-1)\left[\log(uv) - 1 + \frac4{uv+u+v+1}\right] + \frac14(u+v-2)^2 \ge 0.
\end{align}
Now introduce $\rho>0$ and $\sigma\ge0$ by
\begin{align*}
  \rho:=\sqrt{uv}, \quad \sigma := \sqrt{\frac uv}+\sqrt{\frac vu}-2.
\end{align*}
Substitution of $uv=\rho^2$ and $u+v=(\sigma+2)\rho$ in \eqref{eq:themonster2}
yields the equivalent form
\begin{align}
  \label{eq:themonster3}
  (\rho^2-1)\left[2\log\rho - 1 + \frac4{(\rho+1)^2+\sigma\rho}\right]
  + \left(\left(1+\frac\sigma2\right)\rho-1\right)^2 \ge 0.
\end{align}
We prove \eqref{eq:themonster3} using a case distinction.
\medskip

\noindent
\textbf{Case 1:} $\rho\ge\sqrt e$.

In this case, $\rho^2-1>0$, and since $2\log\rho\ge1$, inequality \eqref{eq:themonster3} follows immediately.
\medskip

\noindent
\textbf{Case 2:} $0<\rho\le1$.

In this case, $\rho^2-1<0$.
It is sufficient to prove that
\begin{align}
  \label{eq:themonsterhelp2}
  1-2\log\rho \ge \frac4{(\rho+1)^2+\sigma\rho}.
\end{align}
Since $\sigma\rho\ge0$, and since
\begin{align*}
  -\log\rho \ge 1-\rho + \frac12(1-\rho)^2
\end{align*}
on the given range of $\rho$'s,
inequality \eqref{eq:themonsterhelp2} follows from
\begin{align*}
  1+2(1-\rho)+(1-\rho)^2 \ge \left(\frac2{1+\rho}\right)^2.
\end{align*}
This last inequality holds since its left-hand side equals $(2-\rho)^2$,
and since
\begin{align*}
  (2-\rho)(1+\rho)=2+\rho-\rho^2= 2 + \rho(1-\rho) >2.
\end{align*}

\noindent
\textbf{Case 3:} $1<\rho<\sqrt e$.

Since $(\rho+1)/2>1$ and $\rho/4<1$ in the specified range,
we can fist split and then estimate the quotient in \eqref{eq:themonster3} as follows:
\begin{align*}
  \frac4{(\rho+1)^2+\sigma\rho}
  = \frac4{(\rho+1)^2} - \frac{4\sigma\rho}{(\rho+1)^2\big[(\rho+1)^2+\sigma\rho\big]}
  \ge \left(\frac2{\rho+1}\right)^2 - \sigma.
\end{align*}
Consequently, \eqref{eq:themonster3} is implied by
\begin{align}
  \label{eq:monsterhelp}
  (\rho^2-1)\left[2\log\rho -1 + \left(\frac2{\rho+1}\right)^2\right]
  + \left[-\sigma(\rho^2-1) + \left(\left(1+\frac\sigma2\right)\rho-1\right)^2\right]  \ge 0.
\end{align}
We prove \eqref{eq:monsterhelp} by showing non-negativity of the two expressions in square brackets.
Concerning the first:
by concavity of the logarithm, and since $\sqrt e<2$,
we have for all $\rho$ in the specified range that
\begin{align*}
  \log\rho \ge \frac12\frac{\rho-1}{\sqrt e-1} \ge \frac12(\rho-1).
\end{align*}
Therefore, the first square bracket in \eqref{eq:monsterhelp} is non-negative if
\begin{align*}
  (\rho-1)(\rho+1)^2 - (\rho+1)^2 + 4 \ge 0,
\end{align*}
which is equivalent to
\begin{align*}
  \rho^3-3\rho+2 \ge 0.
\end{align*}
This is clearly true at $\rho=1$,
and holds for larger values of $\rho$ since the derivative $3(\rho^2-1)$ of the left-hand side is positive for $\rho>1$.
Non-negativity of the second square bracket in \eqref{eq:monsterhelp} is equivalent to
\begin{align*}
  \left(\frac{\sigma^2}4+1\right)\rho^2-(\sigma+2)\rho + (\sigma+1) \ge 0.
\end{align*}
The discriminant of the quadratic polynomial on the left-hand side above is
\begin{align*}
  4(\sigma+1) \left(\frac{\sigma^2}4+1\right)-(\sigma+2)^2 = \sigma^3 > 0.
\end{align*}
which implies non-negativity of the polynomial (even for arbitrary values of $\rho$).

\subsection{Proof of Lemma \ref{lem:dDLSSfish}}
For a given mobility function $\geom(a,b)$, the gradient flow of a functional $\dfnc:\dprbplus\to\R$ with respect to the discretized $L^2$-Wasserstein metric in $\dprb$ is given by
\begin{align}
    \dot\rho_\kappa &=
    -\partial_-^\delta \bra*{\geom(\rho)\,\partial_+^\delta \bra*{\frac{\partial\dfnc}{\partial\rho}}} \notag \\
	&=\delta^{-2}\left(
    \geom(\rho_{\kappa+1},\rho_\kappa)\big[\partial_{\rho_{\kappa+1}}\dfnc(\rho)-\partial_{\rho_{\kappa}}\dfnc(\rho)\big] 
    -\geom(\rho_{\kappa},\rho_{\kappa-1})\big[\partial_{\rho_{\kappa}}\dfnc(\rho)-\partial_{\rho_{\kappa-1}}\dfnc(\rho)\big] 
    \right),\label{eq:W2}
\end{align}
see \cite{Maas2011} or \cite{MaasMatthes} for more details.
For the variational derivative (see also~\eqref{eq:derviv:dentropy} for the scaling in $\delta$) of the discretized Fisher information~\eqref{eq:def:dfish}, we obtain 
\begin{align*}
	\partial_{\rho_\lambda}\dfish(\rho)
	= 2\delta^{-2}\bra*{2 - \frac{\sqrt{\rho_{\lambda+1}}+\sqrt{\rho_{\lambda-1}}}{\sqrt{\rho_\lambda}}} .
\end{align*}
Recalling that $\geom(a,b)=\sqrt{ab}$, we find that
\begin{align*}
    \geom(\rho_{\kappa+1},\rho_\kappa)\big[\partial_{\rho_{\kappa+1}}\dfish(\rho)-\partial_{\rho_{\kappa}}\dfish(\rho)\big]
    &= -2\delta^{-2}\Big[\big(\sqrt{\rho_{\kappa+2}\rho_\kappa}+\rho_\kappa\big) - \big(\sqrt{\rho_{\kappa+1}\rho_{\kappa-1}}+\rho_{\kappa+1}\big)\Big] \\
    &= -2\delta^{-2}\Big[\big(\sqrt{\rho_{\kappa+2}\rho_\kappa}-\rho_{\kappa+1}\big) - \big(\sqrt{\rho_{\kappa+1}\rho_{\kappa-1}}-\rho_{\kappa}\big)\Big].
\end{align*}
The corresponding other term on the right-hand side of \eqref{eq:W2} is obtained by an index shift $\kappa\to\kappa-1$. Using the definition of the discrete Laplacian $\ddff$, we observe that \eqref{eq:W2} turns into
\begin{align*}
    \dot\rho_\kappa = -2\delta^{-2}\ddff_\kappa\big(\sqrt{\rho_+\rho_-}-\rho\big),
\end{align*}
which is \eqref{eq:dDLSS}. Monotonicity of $\dfish$ is now an easy consequence of the representation \eqref{eq:W2} of \eqref{eq:dDLSS}. A summation by parts yields that
\begin{align*}
    -\frac{\dd}{\dn t}\dfish(\rho) 
    &= -\delta\sum\nolimits_\kappa \dot\rho_\kappa\,\partial_{\rho_\kappa}\dfish(\rho) 
    = \delta\sum\nolimits_\kappa \geom(\rho_{\kappa+1},\rho_\kappa)\left[\frac{\partial_{\rho_{\kappa+1}}\dfish(\rho)-\partial_{\rho_{\kappa}}\dfish(\rho)}{\delta}\right]^2 \ge 0.
\end{align*}
Lemma \ref{lem:dDLSSfish} has been proven.

\subsection{Proof of Lemma~\ref{lem:uniform:time:bounds}}
		\change{Thanks to Proposition~\ref{prp:dQDDx1}, it is enough to consider positive solutions by the continuity of the discretized functionals.}
		The estimates are a further application of the crucial a priori bound~\eqref{eq:dissipation:bound}. In combination with Lemma~\ref{lem:convsob1} from the Appendix, we obtain that
		\begin{equation}\label{eq:dfishdent}
			\left(\frac12\dfish(\rhon)\right)^2
			=\left(\delta\sum\nolimits_\kappa\big(\partial^\delta_+\sqrt{\rhon}\big)^2\right)^2 
			\le \delta\sum\nolimits_\kappa \left(\sqrt{\rhon}\right)^2\ \delta\sum\nolimits_\kappa\big(\ddff\sqrt{\rhon}\big)^2 
			\le -\frac{\dd}{\dn t}\dent(\rhon).        
		\end{equation}
		An application of the logarithmic Sobolev inequality~\eqref{eq:LSI:d} with constant $C_{\textup{LSI}}$, leads to the bound
		\begin{align*}
			-\frac{\dd}{\dn t}\dent(\rhon)
			\ge \dent(\rhon)^2 / (4 C_{\textup{LSI}}^2)
		\end{align*}
		The result~\eqref{eq:unident} is now a consequence from the fact that $\dent$ is non-negative, and thus
		\begin{align*}
			\dent(\rhon(t))\le \frac1{\dent(\rhon(0))+t/(4 C_{\textup{LSI}}^2)} \le \frac{4 C_{\textup{LSI}}^2}t.
		\end{align*}
		Next, integration of \eqref{eq:dfishdent} with respect to time in combination with the monotonicity of $\dfish(\rhon)$ yields for every $0<\tau<t$:
		\begin{align*}
			(t-\tau)\dfish(\rhon(t))^2
			\le\int_\tau^t\dfish(\rhon(s))^2\dd s
			\le 4\big(\dent(\rhon(\tau))-\dent(\rhon(t))\big)
			\le 4\dent(\rho(\tau)).
		\end{align*}
		Choosing in particular $\tau=t/2$, and recalling \eqref{eq:unident}, provides
		\begin{align*}
			\dfish(\rhon(t))^2\le 4(t-t/2)^{-1}\dent(\rhon(t/2)) \le 64 C_{\textup{LSI}}^2 t^{-2},
		\end{align*}
		which is \eqref{eq:unidfish}. The last inequality \eqref{eq:unidest} is yet another application of the crucial estimate \eqref{eq:dissipation:bound}. Simply integrate the relation
		\begin{align*}
			\delta\sum\nolimits_\kappa \left(\ddff_\kappa\sqrt\rhon\right)^2      
			\le - \frac{\dd}{\dn s}\dent(\rhon(s)) 
		\end{align*}
		from $s=t>0$ to $s=T>t$ and apply \eqref{eq:unident}.

	For the proof of~\eqref{eq:scheme:shorttime},
	\change{first note that $\dent(\rhon)= \delta \sum_{\kappa} \rhon_\kappa \log \rhon_\kappa$, since $\rhon\in\dprb$.
	Next, fix $p>2$ and observe that the elementary estimate
	\[
		\bra*{\tfrac{p}{2}-1} \log z = \log z^{p/2-1} \leq  z^{p/2-1} -1 \qquad\text{for any $z>0$}
	\]
	implies in particular that 
	\begin{equation}\label{eq:ent:est}
		\bra*{\tfrac{p}{2}-1}\dent(\rhon)\leq \delta \sum\nolimits_\kappa |\rhon_\kappa|^{p/2}-1 \qquad\text{for any $\rhon\in\Prob^\delta$}.
	\end{equation}
    Now let $p$ be specifically of the form $p = 2+\frac{2}{r+1}$ with $r > 0$. The Gagliardo-Nirenberg Sobolev inequalities in Lemma~\ref{lem:dgni}
    provide the estimate $\norm[\big]{\sqrt{\rhon}}_{H^1_N}^2 \geq \norm[\big]{\sqrt{\rhon}}_{L^p_N}^{\frac{2}{\theta}}$  with $\theta=\frac{p-2}{p}$; here we have used that $\norm[\big]{\sqrt{\rhon}}_{L^2_N} = 1$.
	Recalling that $\Delta^\delta = \partial_-^\delta \partial_+^\delta$, and since $\partial_+^\delta \sqrt{\rho^\delta}$ has mean zero, the Poincaré inequality from Proposition~\ref{prop:PI-LSI} now allows to conclude that}
	\begin{align*}
		-4 C_{\textup{PI}} \frac{\dd}{\dn t} \dent(\rhon) 
        &\geq C_{\textup{PI}} \delta \sum\nolimits_\kappa \bra {\ddff \sqrt{\rhon}}^2 
        \geq \delta \sum\nolimits_\kappa \abs{ \partial^\delta_+ \sqrt{\rhon}}^2 
        = \norm[\big]{\sqrt{\rhon}}_{H^1_N}^2 - 1 \\
		&\geq \dnorm{L^p_N}{\sqrt{\rhon}}^{\frac{2}{\theta}}  - 1=\bra[\bigg]{\delta \sum\nolimits_\kappa |\rhon_\kappa|^{p/2}}^{\frac{2}{p\theta}} -1 \change{\stackrel{\eqref{eq:ent:est}}{\geq} \bra*{\bra*{\tfrac{p}{2}-1}\dent(\rhon) + 1}^{\frac{2}{p\theta}}  - 1 \,.}
	\end{align*}
	\change{Note that $\frac{2}{p\theta} = \frac{2}{p-2} = r+1$ and $\tfrac{p}{2}-1=\tfrac{1}{r+1}$. Using that $(1+z)^{r+1} \geq 1+z^{r+1}$ for any $z>0$, we obtain the differential inequality
	\[
		-\frac{\dd}{\dn t} \dent(\rhon)\geq \frac{1}{4 C_{\textup{PI}}} \bra*{ \tfrac{1}{r+1} \dent(\rhon)}^{r+1} \,.
	\]
    Or, equivalently,
    \[
	\frac{\dd}{\dn t} \dent(\rhon)^{-r} \geq C_r := \frac{r}{4 C_{\textup{PI}}(r+1)^{r+1}} \,,
	\]
	which gives via integration in time the bound
	\[
		\dent(\rhon(t)) \leq \bra[\bigg]{C_r t + \frac{1}{(\dent(\rhon(0)))^r}}^{-1/r} \leq \bra*{C_r t}^{-1/r} \,.
	\]
	This is~\eqref{eq:scheme:shorttime}
	}

\section{Continuous limit: Proof of Result~\ref{result:convergence}}
\label{sct:dQDDconv}
For a precise statement of the convergence result, we introduce the reconstruction operator for grid functions $(f_\kappa)_{\kappa\in[N]}$ by piecewise constant embedding,
\begin{equation*}%
	\pid: \mathbb{R}^N \rightarrow L^1(\crc), \quad
	\pid f^\delta(x) = f^\delta_\kappa \quad\text{for $x \in \change{\bigl((\kappa-1/2)\delta, (\kappa+1/2)\delta\bigr)}$}.
\end{equation*}
The adjoint of $\Pi$ is the averaging operator
\begin{equation*}%
	\widehat{\cdot}^\delta:\change{\Prob(\crc)} \rightarrow \mathbb{R}^N, \quad \widehat{\mu}^\delta_{\kappa} = \fint_{\interval_{\kappa}} \change{ \dx\mu}.
\end{equation*}
We discretize the initial data and make it positive using the operator
\begin{equation}\label{eq:discrete:initial}
	\Upsilon^\delta :\change{\Prob(\crc)}\to \dprb \qquad\text{with}\qquad \Upsilon^\delta \rho = (1+\delta)^{-1} \big(\hat\rho^\delta+\delta\big).
\end{equation}
With these notations at hand, we can now formulate the main result of this paper.
\begin{manualtheorem}{A}[Convergence statement]\label{thm:limit}
	Let an initial datum $\rho_0\in \change{\Prob(\crc)}$ be given. For each $\delta>0$, consider the unique global solution $\rhon:[0,\infty)\to\dprb$ to the differential equation \eqref{eq:dDLSS} with initial datum $\rho^\delta(0)=\Upsilon^\delta(\rho_0)$ defined in~\eqref{eq:discrete:initial}.
    
    Then there exists a weakly continuous curve $\rho:[0,\infty)\to \change{\Prob(\crc)}$ with $\rho(0)=\rho_0$ such that along a subsequence $\delta \rightarrow 0$:
	\begin{itemize}
        \item $\pid\rhon(t) \rightarrow \rho(t)$ in \change{$\dot W^{2,\infty}(\crc)$} for every $t\ge0$;
		\item $\pid \change{\sqrt{\rhon}} \rightarrow \change{\sqrt{\rho}}$ strongly in $L^2_\loc((0,\infty); L^2(\crc))$;
		\item $\pid \ddff \sqrt{\rhon} \rightharpoonup \partial_{xx} \sqrt{\rho}$ weakly in $L^2_\loc((0,\infty); L^2(\crc))$;
		\item $\pid \partial_-^\delta \sqrt{\rhon}, \; \pid \partial_+^\delta \sqrt{\rhon} \rightarrow \partial_{x} \sqrt{\rho}$ strongly in $L^2_\loc((0,\infty); L^2(\crc))$.
	\end{itemize}
    Moreover, $\rho$ satisfies the following weak formulation of equation~\eqref{eq:DLSS}:
	\begin{equation}\label{eq:DLSS:weak}
		\int_{0}^{\infty} \int_{\crc} \partial_t \varphi(t,x)\rho(t,x) \dd x \dd t=2  \int_{0}^{\infty} \int_{\crc} \partial_{xx} \varphi(t,x) \bra*{ \sqrt{\rho} \partial_{xx} \sqrt{\rho} - \lvert \partial_x \sqrt{\rho} \rvert^2 } \dd x \dd t
	\end{equation}
	for all $\varphi \in C^\infty_c((0,\infty)\times\crc)$. Finally, if $\fish(\rho_0)<\infty$, then \change{$\Rnn\ni t\mapsto \rho(t,\cdot)$ } is globally H\"older continuous of degree $1/24$ with respect to $\hell$.
\end{manualtheorem}
\begin{proof} 
    The proof is divided into several steps.	
    \medskip

\noindent
\textbf{Step 0. Fundamental a priori bound.}
\newline    
    By Lemma \ref{lem:dDLSSpos}, there is a solution $\rhon$ to our scheme \eqref{eq:dDLSS} with the given positive initial datum $\Upsilon^\delta(\rho_0) \in \dprb$. Recalling the entropy production estimate \eqref{eq:dissipation:bound},
	we infer for all sufficiently small $\delta > 0$ and arbitrary $t>s\ge0$ that
	\begin{align*}
		\int_{s}^{t} \int_{\crc} \Bigr(\pid \ddff \sqrt{\rho^{\smash\delta}(\tau)}\Bigl)^{\!2} \dd x \dd \tau \leq  \dent(\rhon(s)) - \dent(\rhon(t)) \le \dent(\Upsilon^\delta(\rho_0)) %
	\end{align*}
	where we have used monotonicity of $\tau\mapsto\dent(\rhon(\tau))$. If $\ent(\rho_0)$ is finite, then $\dent(\Upsilon^\delta(\rho_0))$ is $\delta$-uniformly bounded, 
	\begin{equation}
        \label{eq:sqrtbyH}
		\dent(\Upsilon^\delta(\rho_0)) \leq \frac{1}{1+\delta} \dent(\widehat{\rho_0}^{\!\delta}) + \frac{\delta}{1+\delta} \dent(1) \leq \dent(\widehat{\rho_0}^{\!\delta}) \leq \ent(\rho_0),
	\end{equation}
    where we have used convexity of $\dent$, and the definition of $\Upsilon^\delta$ in~\eqref{eq:discrete:initial}. If instead $\dent(\rho_0)=+\infty$, then we use \eqref{eq:unident} for an $s$-dependent bound,
    \begin{align}
        \label{eq:sqrtuniversal}
        \int_{s}^{t} \int_{\crc} \Bigr(\pid \ddff \sqrt{\rho^{\smash\delta}(\tau)}\Bigl)^{\!2} \dd x \dd \tau \le 64C_\text{LSI}^2s^{-1}.
    \end{align}
\medskip

\noindent
\textbf{Step 1: Locally uniform Hölder continuity in time.}
\newline
    Let $t>s\ge0$ be given, and fix some $\varphi \in \dot W^{2,\infty}(\crc)$ with $\lVert \partial_{xx} \varphi \rVert_{\infty} \leq 1$. Using summation by parts and Hölder's inequality for sums, we find that
	\begin{align*}
		\MoveEqLeft\int_{\crc} \varphi \left(\pid \rhon(t) - \pid \rhon(s)\right) \dd x
		= \int_{s}^{t} \int_{\crc} \varphi \,\pid \dot\rho^\delta(\tau) \dd \tau \dd x
        = \int_s^t \delta\sum\nolimits_\kappa \hat\varphi^\delta_\kappa \,\dot\rho^\delta_\kappa(\tau)\dd\tau \\
		&= - \int_{s}^{t} \sum\nolimits_\kappa\hat\varphi^\delta_\kappa\, \ddff_\kappa\big(\mob(\tau)\,\ddff \log\rhon(\tau)\big) \dd \tau \\
		&= - \int_{s}^{t} \delta \sum\nolimits_\kappa \ddff_\kappa \hat{\varphi}^\delta \mob_\kappa(\tau) \ddff_\kappa \log\rhon(\tau) \dd \tau\\
		&\leq \biggl( \int_{s}^{t} \tau^{-2/3} \delta \sum\nolimits_\kappa \mob_\kappa(\tau) \left(\ddff_\kappa \hat{\varphi}^\delta\right)^2 \dd \tau \biggr)^{\!1/2} \biggl( \int_{0}^{t} \tau^{2/3}\delta \sum\nolimits_\kappa \mob_\kappa(\tau) \left( \ddff_\kappa \log\rhon(\tau)\right)^2 \dd \tau \biggr)^{\!1/2} \\
        &= \biggl(3(t^{1/3}-s^{1/3})\|\partial_{xx}\varphi\|_{\infty}^2\,\sup_{s<\tau<t}\biggl[\delta\sum\nolimits_\kappa\mob(\tau)\biggr]\biggr)^{1/2}\left(-\int_0^t\tau^{2/3}\frac{\dd}{\dn\tau}\ent(\rhon(\tau))\dd\tau\right)^{1/2}.
    \end{align*}
    Using that $\delta\sum_\kappa\mob \le 3\delta\sum_{\kappa}\rhon\le 3$, see Lemma~\ref{le:threepointmean}, and that 
    \begin{align*}
        \int_0^t\tau^{2/3}f(\tau)\dd\tau
        = \frac23\int_0^t\left[\int_0^\tau \sigma^{-1/3}\dd\sigma\right]f(\tau)\dd\tau
        = \frac23\int_0^t\left[\int_\sigma^t f(\tau)\dd\tau\right]\sigma^{-1/3}\dd\sigma
    \end{align*}
    for any continuous function $f$ on $[0,t]$,
    we conclude that
    \begin{align*}
		\int_{\crc} \varphi \left(\pid \rhon(t) - \pid \rhon(s)\right) \dd x
        &\le \sqrt{6(t^{1/3}-s^{1/3})}\left(\int_0^t\sigma^{-1/3}\ent(\rhon(\sigma))\right)^{1/2}.
    \end{align*}  
	Next, we apply~\eqref{eq:scheme:shorttime} from Lemma \ref{lem:uniform:time:bounds} with $r = 3$ to obtain
	\begin{align*}
		\int_0^t \sigma^{-1/3}\dent(\rhon(\sigma))\dd \sigma  \leq C \max\set*{t^{1/3}, t^{2/3}},
	\end{align*}
    and use the elementary inequality \change{$a^{1/3}-b^{1/3}\le ( a- b)^{1/3}$ for $0\leq b\leq a$}, to finally conclude that
    \begin{align*}
        \int_{\crc} \varphi \left(\pid \rhon(t) - \pid \rhon(s)\right) \dd x
        \le \sqrt{6C}\max\{t^{1/3},t^{1/6}\} (t-s)^{1/6}.
    \end{align*}    
    This shows $\delta$-uniform and local in time Hölder-1/6-continuity of the curves $t\mapsto\pid\rhon(t)$ in the dual of $\dot W^{2,\infty}(\crc)$. We conclude by the generalized Arzelà-Ascoli theorem \cite[Theorem 3.3.3]{AGS} that $\pid\rhon(t)\to\rho(t)$ in $W^{2,\infty}(\crc)'$, locally uniformly with respect to $t\ge0$, with a limit that is Hölder-1/6-continuous.
    \medskip
 
\noindent
\textbf{Step 2. Strong convergence of $\rhon$ in $L^1_\loc((0,\infty);L^1(\crc))$.}
	\newline
    We invoke the refined Aubin-Lions-method from~\cite{RoSa03} to obtain convergence of $\pid \rhon$ to $\rho$ in $L^1_\loc((0,\infty); L^1(\crc))$. 
    Fix $0<\tau<T$. From the key a priori estimate \eqref{eq:sqrtuniversal} in combination with Lemma~\ref{lem:convsob1} and H\"older's inequality, we obtain the following time-integrated bound on the total variation of $\pid\rhon$:
	\begin{align*}
		\int_{\tau}^{T}  \sum\nolimits_\kappa \left|\rhon_{\kappa+1} - \rhon_\kappa\right| \dd t 
        &= \int_{\tau}^{T}  \sum\nolimits_\kappa \Bigl|\sqrt{\rhon_{\kappa+1}} + \sqrt{\rhon_\kappa} \Bigr| \ \Bigl|\sqrt{\rhon_{\kappa+1}} - \sqrt{\rhon_\kappa} \Bigr| \dd t \notag\\
		&\leq \biggl(2\int_\tau^T\delta\sum\big(\rhon_{\kappa+1}+\rhon_\kappa\big)\dd t\biggr)^{1/2} \biggl( \int_{\tau}^{T} \delta \sum \big| \partial^\delta_+ \sqrt{\rhon} \big|^2 \dd t \biggr)^{\!1/2} \\ %
		& \leq 2\sqrt{T} \biggl( \int_{\tau}^{T} \delta \sum \rhon \dd t \biggr)^{\!1/4} \biggl(\int_{\tau}^{T} \delta \sum \big| \ddff \sqrt{\rhon} \big|^2 \dd t\biggr)^{\!1/4} \notag \\
        & \le 2T^{3/4}\sqrt{8C_\text{LSI}}\,\tau^{-1/4}.\notag
	\end{align*}
    From Step 1, we know that $\pid\rhon(t)\to\rho(t)$ in $W^{2,\infty}(\crc)'$ uniformly with respect to $t\in[\tau,T]$. An application of \cite[Theorem 2]{RoSa03} yields convergence $\rhon(t)\to\rho(t)$ in measure with respect to $t\in[\tau,T]$. Since further $\ent(\pid\rhon(t))=\dent(\rhon(t))\le 64C_\text{LSI}^2\tau^{-1}$ and $r\mapsto r(\log r-1)+1$ is of superlinear growth, the convergence is actually in $L^1([\tau,T])$.

    As a consequence, also $\sqrt{\rhon}\to\sqrt\rho$ in $L^2_\loc((0,\infty);L^2(\crc))$.
	\medskip

\noindent
\textbf{Step 3. Weak convergence of the second derivative in $L^2_\loc((0,\infty); L^2(\crc))$.}
	\newline
    By the main a priori estimate \eqref{eq:sqrtuniversal}, there is some $\zeta \in L^2_\loc((0,\infty); L^2(\crc))$ such that for any choice of $0<\tau<T<\infty$, 
	\begin{equation*}
		\pid \ddff \sqrt{\rhon} \rightharpoonup \zeta \quad \text{weakly in $L^2([\tau,T]; L^2(\crc))$}.
	\end{equation*} 
    It remains to show that $\sqrt\rho\in L^2_\loc((0,\infty);H^2(\crc))$ with $\partial_{xx}\sqrt\rho=\zeta$. Let $\varphi\in C^\infty_c((0,\infty)\times\crc)$, and observe that
    \begin{equation}
    \label{eq:dxx}
    \begin{split}
        \int_0^\infty\int_{\crc}\varphi\,\pid\ddff\sqrt{\rhon}\dd x\dd t
        &= \int_0^\infty \delta\sum \hat\varphi^\delta\,\ddff\sqrt{\rhon} \dd t \\
        &= \int_0^\infty \delta\sum \ddff\hat\varphi^\delta\,\sqrt{\rhon}\dd t
        = \int_0^\infty \int_{\crc} \pid \ddff\hat\varphi^\delta\,\pid\sqrt{\rhon}\dd x\dd t.
    \end{split}        
    \end{equation}
    Since $\varphi$ is smooth and of compact support, the second order spatial difference quotients converge uniformly to $\partial_{xx}\varphi$, and consequently,
    \begin{align*}
        \pid\ddff\hat\varphi^\delta \to \partial_{xx}\varphi \quad\text{uniformly as $\delta\to0$}.
    \end{align*}
    Further, $\sqrt{\rhon}\to\sqrt\rho$ in $L^2_\loc((0,\infty);L^2(\crc))$ by Step 2 above. Passing to the limit $\delta\to0$ in \eqref{eq:dxx} yields
    \begin{align*}
        \int_0^\infty\int_{\crc}\varphi\,\zeta\dd x\dd t = \int_0^\infty\int_\crc \partial_{xx}\varphi\,\sqrt\rho\dd x\dd t.
    \end{align*}
    Since $\varphi\in C^\infty_c((0,\infty)\times\crc)$ has been arbitrary, $\zeta$ has been identified as the weak second derivative~$\partial_{xx}\sqrt\rho$.
	\medskip

 \noindent
 \textbf{Step 4. Strong convergence of the first derivatives in $L^2_\loc((0,\infty); L^2(\crc))$.}
    \newline
    We show that $\pid \partial_+^\delta\sqrt{\rhon} \rightarrow \partial_{x} \sqrt\rho$ in $L^2((0,T);L^2(\crc))$; the proof for $\pid \partial_-^\delta\sqrt{\rhon} \rightarrow \partial_{x}\sqrt\rho$ in analogous. The proof goes by interpolation between the convergences obtained in Step 2 and Step 3 above. First, observe that for any $w\in L^2_\loc((0,\infty);H^2(\crc))$, one has
    \begin{align}
        \label{eq:interpol}
        \pid\hat w^\delta \to w, \quad
        \pid\partial_+^\delta\hat w^\delta \to \partial_x w, \quad\text{and}\quad
        \pid\ddff\hat w^\delta \to\partial_{xx}w   
        \quad \text{in $L^2_\loc((0,\infty);L^2(\crc))$}.
    \end{align}
    Applying this specifically to $w:=\sqrt\rho$, it follows from Step 2 and Step 3, respectively, that
    \begin{align*}
        \pid\big[\sqrt{\rhon} - \widehat{\sqrt\rho}{}^\delta\big] \to 0 \quad &\text{strongly in $L^2_\loc((0,\infty);L^2(\crc)\change{)}$}, \\
        \pid\ddff\big[\sqrt{\rhon} - \widehat{\sqrt\rho}{}^\delta\big] \rightharpoonup 0 \quad &\text{weakly in $L^2_\loc((0,\infty);L^2(\crc)\change{)}$}.
    \end{align*}    
    Since weakly convergent sequences are bounded, this implies by means of Lemma \ref{lem:convsob1} that
    \begin{align*}
        \left\|\pid\partial_+^\delta\Big[\sqrt{\rhon}-\widehat{\sqrt\rho}{}^\delta\Big] \right\|_{L^2}^2
        \le \left\|\sqrt{\rhon}-\widehat{\sqrt\rho}{}^\delta\right\|_{L^2}\left\|\pid\ddff\Big[\sqrt{\rhon}-\widehat{\sqrt\rho}{}^\delta\Big] \right\|_{L^2}\to 0,
    \end{align*}
    which means that
    \begin{align*}
        \pid\partial_+^\delta\sqrt{\rhon}-\pid\partial_+^\delta\widehat{\sqrt\rho}^\delta\to0 \quad \text{strongly in $L^2_\loc((0,\infty);L^2(\crc)\change{)}$}.
    \end{align*}
    Recalling \eqref{eq:interpol} again, this yields the desired convergence.
\medskip

\noindent
\textbf{Step 5: Derivation of the weak formulation.} 
\newline
    Recall that the spatially discrete evolution equation \eqref{eq:dDLSS} is
	\begin{equation*}
		\dot{\rho}^\delta = - 2\ddff \biggl( \frac{\sqrt{\rhon_{\smash+}\rhon_{\smash-}} - \rhon}{\delta^2} \biggr).
	\end{equation*} 
    We rewrite the expression inside the second derivative as follows:
	\begin{align*}
		2\biggl( \frac{\sqrt{\rhon_{\smash+} \rhon_{\smash-}} - \rhon}{\delta^2} \biggr) 
        & = \Bigl( \sqrt{\rhon_{\smash+}} + \sqrt{\rhon_{\smash-}} \Bigl) \ddff \sqrt{\rhon} - \biggl( \frac{\sqrt{\rhon_{\smash+}}- \sqrt{\rhon}}{\delta} \biggr)^{\!2} - \biggl( \frac{\sqrt{\rhon}- \sqrt{\rhon_{\smash-}}}{\delta} \biggr)^{\!2} \\
        & = \Big(2\sqrt{\rhon} + \delta\,\partial_+^\delta\sqrt{\rhon} + \delta\,\partial_-^\delta\sqrt{\rhon}\Big)\ddff\sqrt{\rhon} - \big(\partial_+^\delta\sqrt{\rhon}\big)^2 - \big(\partial_-^\delta\sqrt{\rhon}\big)^2
        =: f.
	\end{align*}
    By the convergence properties derived in Step 2, Step 3 and Step 4 above, we have
    \begin{align}
        \label{eq:2weak}
        \pid f \rightharpoonup 2\sqrt\rho\,\partial_{xx}\sqrt\rho - 2\big(\partial_x\sqrt\rho\big)^2 \quad\text{in $L^2_\loc((0,\infty);L^2(\crc))$}.
    \end{align}
    To conclude the derivation of \eqref{eq:DLSS:weak}, let $\varphi\in C^\infty_c((0,\infty)\times\crc)$. An integration by parts in time yields
    \begin{align*}
        \int_0^\infty \int_\crc \partial_t\varphi\,\pid\rhon \dd x\dd t
        &= - \int_0^\infty \int_\crc \varphi\,\pid\dot\rhon\dd x\dd t \\
        &= \int_0^\infty \delta\sum \hat\varphi^\delta\,\ddff f \dd t \\
        &= \int_0^\infty \delta\sum \ddff\hat\varphi^\delta\,f\dd t 
        = \int_0^\infty \int_\crc \pid \ddff\hat\varphi^\delta \pid f \dd x\dd t.
    \end{align*}
    Now pass to the limit $\delta\to0$ with the first and the last integral expressions. Recalling \eqref{eq:interpol} and \eqref{eq:2weak}, the weak formulation \eqref{eq:DLSS:weak} follows.
    \medskip

\noindent
\textbf{Step 6: Hölder continuity in $\dst$ and in $\hell$.}
    If $\ent(\rho_0)<\infty$, or even $\fish(\rho_0)<\infty$, then the Hölder estimate in Step 1 can be improved using, respectively, the better bound \eqref{eq:sqrtbyH} or in addition the monotonicity of $\dfish$ from Lemma  \ref{lem:dDLSSfish} in combination with the interpolation from Lemma~\ref{lem:AubinLions}. 

    Assuming $\ent(\rho_0)<\infty$, we conclude that 
    \begin{align*}
        \int_0^\infty \delta\sum \mob_\kappa(\rhon(\tau)) \left( \ddff_\kappa \log\rhon(\tau)\right)^2 \dd \tau
        = \int_0^\infty \left(-\frac{\dd}{\dn\tau}\dent(\rhon(\tau))\right)\dd\tau
        \le \dent(\rhon(0)) \le \ent(\rho_0).
    \end{align*}
    Fix $t_1>t_0>0$ and introduce $t_s=t_0+s(t_1-t_0)$. We wish to estimate $\ddst_\mob(\rhon(t_0),\rhon(t_1))$. To that end, consider the auxiliary pair $(\eta,\welo)\in\curves$, given by
    \begin{align*}
        \eta^s = \rhon(t_s),
        \quad
        \welo^s = (t_1-t_0)\mob(\eta^s)\ddff\log\eta^s
        \quad \text{for $s\in[0,1]$}.
    \end{align*}
    Indeed, the continuity equation \eqref{eq:dcont999} follows directly from \eqref{eq:dDLSS},
    \begin{align*}
        \frac{\dd}{\dn s}\eta^s 
        = (t_1-t_0)\frac{\dd}{\dn t}\rhon(t_s) 
        = (t_1-t_0)\ddff\big[\mob(\rhon(t_s))\ddff\log\rhon(t_s)\big]
        = \ddff\welo^s .
    \end{align*}
    It thus follows that
    \begin{align*}
        \ddst_\mob(\rho^\delta(t_0),\rho^\delta(t_1))^2
        &\le \int_0^1 \delta\sum\nolimits_\kappa \frac{(\welo^s_\kappa)^2}{\mob_\kappa(\eta^s)} \dd s \\
        &\quad = (t_1-t_0)\int_{t_0}^{t_1} \delta\sum\nolimits_\kappa \mob_\kappa(\rhon(t)) \left( \ddff_\kappa \log\rhon(t)\right)^2 \dd t
        \le \dent(\rho_0)\ (t_1-t_0).
    \end{align*}
    In conclusion, we obtain the global Hölder estimate
    \begin{align}
        \label{eq:dstholder}
        \ddst_\mob(\rhon(t_0),\rhon(t_1)) \le \sqrt{\ent(\rho_0)} (t_1-t_0)^{1/2}.   
    \end{align}
    Now assume in addition that $\fish(\rho_0)<\infty$. We then obtain a $\delta$-uniform estimate on the initial discrete Fisher information; first, notice that 
    \begin{align*}
	   \dfish(\Upsilon^\delta(\rho_0)) = \frac{1}{1+\delta} \dfish(\widehat {\rho_0}^\delta)
    \end{align*}
    by a scaling argument. Second, we shall prove below that 
    \begin{align}
        \label{eq:dfishbound}
        \dfish(\widehat{\rho_0}^\delta) \le 8\fish(\rho_0).
    \end{align}
    Then $\delta$-uniform bound on $\dfish(\rhon(0))$ is propagated to all later times by monotonicity of $t\mapsto\dfish(\rhon(t))$, see Lemma \ref{lem:dDLSSfish}. The uniform Hölder bound in $\dhell$ is then a consequence of the uniform Hölder bound~\eqref{eq:dstholder} in~$\dst$ above and the interpolation Lemma \ref{lem:AubinLions:discrete}. Since
    \[ \dhell(\rhon(t_0),\rhon(t_1)) = \hell\big(\pid\rhon(t_0),\pid\rhon(t_1)\big), \]
    and $\dhell$ is weakly lower semi-continuous, the Hölder estimate passes to the limit $\delta\to0$.
    
    It remains to prove \eqref{eq:dfishbound}. Note that the function $(a,b)\mapsto (\sqrt{a}-\sqrt{b})^2$ is jointly convex, and hence Jensen's inequality can be applied to conclude
    \begin{align*}
	   \abs[\Big]{\sqrt{\widehat\rho^\delta_{\kappa+1}}-\sqrt{\widehat\rho^\delta_{\kappa}}}^2 &= \abs[\Bigg]{\sqrt{ \fint_{\interval_{\kappa}} \fint_{\interval_{\kappa+1}} \rho(x) \dd x \dd y} - \sqrt{  \fint_{\interval_{\kappa}} \fint_{\interval_{\kappa+1}} \rho(y) \dd x \dd y}}^{2} \\
	   &\leq  \fint_{\interval_{\kappa}} \fint_{\interval_{\kappa+1}}\abs*{ \sqrt{\rho(x)} - \sqrt{\rho(y)}}^2 \dd x \dd y \\
	   &\leq \fint_{\interval_{\kappa}} \fint_{\interval_{\kappa+1}}\abs*{ \int_x^y \partial_z \sqrt{\rho(z)}\dd z}^2 \dd x \dd y 
	   \leq 2 \delta \int_{\interval_{\kappa}\cup \interval_{\kappa+1}} \abs*{ \partial_z \sqrt{\rho(z)}}^2 \dd z. 
    \end{align*}
	We conclude the proof of the bound~\eqref{eq:dfishbound} by
    substituting the bound in the Definition of $\dfish$ from~\eqref{eq:def:dfish}
    \begin{equation*}
	   \dfish(\widehat \rho^\delta) \leq 4 \sum\nolimits_\kappa\int_{\interval_{\kappa}\cup \interval_{\kappa+1}} \abs*{ \partial_z \sqrt{\rho(z)}}^2 \dd z \leq 8 \fish(\rho) . \qedhere
    \end{equation*}
\end{proof}

\appendix

\section{Functional inequalities on the discretized circle}

We need a basic interpolation estimate, resembling an interpolation of $\dot H^1$ with $L^2$ and $\dot H^2$ on the discrete level.
\begin{lemma}\label{lem:convsob1}
	Let $N \in \mathbb{N}$ and set $\delta = N^{-1}$. Then any $v,w \in \mathbb{R}^N$ satisfy the discrete interpolation estimate
	\begin{align*}
		\delta \sum\nolimits_\kappa \partial_+^\delta v \, \partial_+^\delta w \leq \biggl( \delta \sum\nolimits_\kappa \left( \ddff v \right)^2 \biggr)^{\!1/2} \biggl( \delta \sum\nolimits_\kappa w^2 \biggr)^{\!1/2} . 
	\end{align*}
\end{lemma}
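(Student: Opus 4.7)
\medskip

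\noindent
\textbf{Proof plan for Lemma~\ref{lem:convsob1}.}
The plan is to reduce the estimate to Cauchy--Schwarz after one application of discrete summation by parts, effectively moving both difference quotients from $v$ and $w$ onto $v$ alone as the discrete Laplacian $\ddff v$.

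The first step would be to establish the discrete integration-by-parts identity
\begin{equation*}
	\delta \sum\nolimits_\kappa \partial_+^\delta v \; \partial_+^\delta w = -\,\delta \sum\nolimits_\kappa \ddff v \; w,
\end{equation*}
which follows from the periodicity of the sum on $\crc\cong\Z/N\Z$. Concretely, I would first use the elementary summation-by-parts rule $\delta\sum_\kappa(\partial_+^\delta u)\,f = -\delta\sum_\kappa u\,(\partial_-^\delta f)$ with $u=v$ and $f=\partial_+^\delta w$ (or equivalently with $u=\partial_-^\delta v$ and $f=w$) to rewrite the left-hand side, and then use the identity $\ddff = \partial_+^\delta\partial_-^\delta = \partial_-^\delta\partial_+^\delta$ noted in the paper, together with the index shift $\partial_-^\delta w_\kappa = \partial_+^\delta w_{\kappa-1}$ (permitted under the cyclic sum), to bring the expression into the stated form.

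Once that identity is in place, the Cauchy--Schwarz inequality for the discrete scalar product $\skp{\cdot,\cdot}_\delta$ of \eqref{eq:delta:scalp} applied to $\ddff v$ and $w$ yields
\begin{equation*}
	\left| \delta \sum\nolimits_\kappa \partial_+^\delta v \; \partial_+^\delta w \right| = \left| \delta \sum\nolimits_\kappa \ddff v \; w \right| \leq \biggl(\delta \sum\nolimits_\kappa (\ddff v)^2\biggr)^{\!1/2} \biggl(\delta \sum\nolimits_\kappa w^2\biggr)^{\!1/2},
\end{equation*}
which is the desired bound.

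There is essentially no obstacle in this argument: the entire content is the summation-by-parts identity, which is an immediate consequence of periodicity on the discrete torus and of the factorization of $\ddff$ recorded after \eqref{eq:ddffsymmetric}; the rest is a single application of Cauchy--Schwarz. The only point requiring a moment of care is to ensure that the index shift used in the summation by parts is compatible with the cyclic convention $\kappa\in[N]\cong\Z/N\Z$, which produces no boundary terms.
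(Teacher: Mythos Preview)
Your proposal is correct and follows essentially the same route as the paper: both arguments reduce to the identity $\delta\sum_\kappa \partial_+^\delta v\,\partial_+^\delta w = -\delta\sum_\kappa (\ddff v)\,w$ via periodic summation by parts, and then conclude by Cauchy--Schwarz. The paper simply carries out the summation by parts explicitly by expanding the product and performing the index shift by hand, whereas you invoke the abstract rule $\delta\sum_\kappa(\partial_+^\delta u)f = -\delta\sum_\kappa u\,(\partial_-^\delta f)$ together with $\ddff=\partial_-^\delta\partial_+^\delta$.
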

\begin{proof}%
	We expand the square and reorder the terms to finally estimate with the Cauchy-Schwarz inequality
	\begin{align*}
		\delta \sum\nolimits_\kappa \frac{v_+ - v}{\delta}\frac{w_+ - w}{\delta} & = \delta \sum\nolimits_\kappa \pra[\bigg]{\frac{v_+ - v}{\delta^2}w_+ - \frac{v_+ - v}{\delta^2}w }
		 =  \delta \sum\nolimits_\kappa \pra[\bigg]{\frac{v - v_-}{\delta^2}w - \frac{v_+ - v}{\delta^2}w }\\
		&=  \delta \sum\nolimits_\kappa \frac{v_+ + v_- - 2v}{\delta^2} (- w)
		 \leq \biggl( \delta \sum\nolimits_\kappa \biggl(\frac{v_+ + v_- - 2v}{\delta^2}\biggr)^{\!2} \biggr)^{\!1/2} \biggl( \delta \sum\nolimits_\kappa w^2 \biggr)^{\!1/2} . \qedhere
	\end{align*}
\end{proof}
\begin{lemma}[Discrete Gagliardo-Nirenberg-type inequality]\label{lem:dgni}
	Let $N\in \mathbb{N}$, $p\geq 2$ and set $\theta = \frac{p-2}{p}$. Then any $v \in  \mathbb{R}^N$ satisfies, with the notations from \eqref{def:LN},
	\begin{equation*}
	\dnorm{L^p_N}{v} \leq \dnorm{L^2_N}{v}^{1-\theta} \dnorm{H^1_N}{v}^\theta.
	\end{equation*}
\end{lemma}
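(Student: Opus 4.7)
I reduce the lemma to a discrete Sobolev embedding
$\dnorm{L^\infty_N}{v} \le \dnorm{H^1_N}{v}$ combined with an elementary Hölder-type interpolation. The interpolation step is immediate: factoring $|v_\kappa|^p = |v_\kappa|^{p-2}|v_\kappa|^2$ and extracting the first factor pointwise gives
\begin{equation*}
	\dnorm{L^p_N}{v}^p = \delta \sum\nolimits_\kappa |v_\kappa|^{p-2} |v_\kappa|^2 \le \dnorm{L^\infty_N}{v}^{p-2}\,\dnorm{L^2_N}{v}^{2},
\end{equation*}
so that $\dnorm{L^p_N}{v} \le \dnorm{L^\infty_N}{v}^{\theta}\, \dnorm{L^2_N}{v}^{1-\theta}$ with $\theta=(p-2)/p$. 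This is just the log-convexity of the $L^p_N$-norms between $p=2$ and $p=\infty$. Combined with the embedding it will give the claim with constant $1$.

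\textbf{The embedding.} It suffices to show $v_{\kappa^\star}^2 \le \dnorm{H^1_N}{v}^2$ where $\kappa^\star$ attains $|v_{\kappa^\star}|=\dnorm{L^\infty_N}{v}$. Without loss of generality $v\ge 0$, via the discrete Kato inequality $|\partial_+^\delta|v||\le|\partial_+^\delta v|$ pointwise, which preserves all $L^p_N$-norms and does not increase $\dnorm{L^2_N}{\partial_+^\delta v}$. Let $\lambda^\star\in[N]$ minimize $v_\lambda^2$; since the minimum of a nonnegative sequence is bounded by its mean on the unit-length torus, $v_{\lambda^\star}^2 \le \dnorm{L^2_N}{v}^2$. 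Along the shorter of the two cyclic paths $\pi$ from $\lambda^\star$ to $\kappa^\star$, of $\delta$-length at most $1/2$, the telescoping identity $v_{\kappa^\star}-v_{\lambda^\star} = \delta\sum_{m\in\pi}\partial_+^\delta v_m$ combined with Cauchy--Schwarz yields $(v_{\kappa^\star}-v_{\lambda^\star})^2 \le \tfrac12 \,\dnorm{L^2_N}{\partial_+^\delta v}^2$. Expanding
\begin{equation*}
	v_{\kappa^\star}^2 = v_{\lambda^\star}^2 + 2v_{\lambda^\star}(v_{\kappa^\star}-v_{\lambda^\star}) + (v_{\kappa^\star}-v_{\lambda^\star})^2
\end{equation*}
then yields, after absorbing the cross term, the target bound $v_{\kappa^\star}^2 \le \dnorm{H^1_N}{v}^2$.

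\textbf{Main obstacle.} The delicate point is absorbing the cross term $2v_{\lambda^\star}(v_{\kappa^\star}-v_{\lambda^\star})$ tightly enough to land at constant exactly $1$, rather than $\sqrt{3/2}$ (which is what a direct Young inequality $2ab\le a^2+b^2$ delivers from the preceding estimates). To close this gap, I plan to first apply a monotone cyclic rearrangement $v^\star$ of $v$ around $\kappa^\star$: a discrete P\'olya--Szeg\H{o}-type inequality on the cyclic grid guarantees $\dnorm{L^p_N}{v^\star}=\dnorm{L^p_N}{v}$ for every $p$ while $\dnorm{L^2_N}{\partial_+^\delta v^\star} \le \dnorm{L^2_N}{\partial_+^\delta v}$. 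After rearrangement the minimum sits at the antipode of the maximum, so the path-Cauchy--Schwarz estimate becomes sharp, and a direct computation on the resulting unimodal profile -- essentially a reduction to the lowest nontrivial Fourier mode of $v^\star$ -- recovers the embedding with constant exactly~$1$, which combined with Step~1 finishes the proof.
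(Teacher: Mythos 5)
Your first step (log-convexity of the discrete $L^p$-norms, $\dnorm{L^p_N}{v}\le \dnorm{L^\infty_N}{v}^{\theta}\dnorm{L^2_N}{v}^{1-\theta}$) is correct and is exactly how the paper argues, and your reduction of the lemma to the embedding $\dnorm{L^\infty_N}{v}\le\dnorm{H^1_N}{v}$ is also the paper's route. The genuine gap is in the last step of your plan: the constant-$1$ embedding is never actually proved (it is deferred to a rearrangement argument plus an unspecified ``direct computation''), and it is in fact \emph{false}, so that step cannot be completed. Take $v_\kappa=1+\eps\cos(2\pi\kappa\delta)$ with $\eps=\tfrac1{20}$ and any $N\ge3$: this $v$ is positive, symmetric about its maximum $v_N=1+\eps$ and monotone towards the antipodal minimum, hence it coincides with its own monotone cyclic rearrangement, and it consists of exactly the lowest nontrivial Fourier mode to which your plan reduces. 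Nevertheless
\begin{equation*}
  \dnorm{L^\infty_N}{v}^2=(1+\eps)^2>1+2\eps,
  \qquad
  \dnorm{H^1_N}{v}^2=1+\Bigl(\tfrac12+\tfrac{1-\cos(2\pi\delta)}{\delta^2}\Bigr)\eps^2
  \le 1+\bigl(\tfrac12+2\pi^2\bigr)\eps^2<1+2\eps .
\end{equation*}
So the cross term you identified as the ``main obstacle'' genuinely cannot be absorbed down to constant $1$; what your (correct) Kato reduction, min-versus-mean bound, half-circle path estimate and Young's inequality do give is $\dnorm{L^\infty_N}{v}^2\le\tfrac32\dnorm{H^1_N}{v}^2$, hence the lemma with an extra factor $(3/2)^{\theta/2}$.

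For comparison, the paper's own proof passes through the same constant-$1$ embedding, via the chain $\dnorm{\infty}{v}\le\delta\sum_\kappa|v_\kappa|+\delta\sum_\kappa|\partial_+^\delta v|\le\dnorm{H^1_N}{v}$, whose last inequality fails on the very same example; so the obstruction you ran into is intrinsic to the statement with constant $1$, not to your particular route. Indeed the same one-mode family shows that the asserted inequality itself fails for large $p$: for small $\eps$ and large $N$ one has $\dnorm{L^p_N}{v}=1+\tfrac{p-1}{4}\eps^2+o(\eps^2)$ while $\dnorm{L^2_N}{v}^{1-\theta}\dnorm{H^1_N}{v}^{\theta}\le 1+\bigl(\tfrac14+\theta\pi^2\bigr)\eps^2+o(\eps^2)$, and the former exceeds the latter once $p>4\pi^2$. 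The sensible repair is to allow a universal constant: your argument as written already yields $\dnorm{L^p_N}{v}\le(3/2)^{\theta/2}\dnorm{L^2_N}{v}^{1-\theta}\dnorm{H^1_N}{v}^{\theta}$, which is all that the only application (Lemma~\ref{lem:uniform:time:bounds}, where $p\le3$) requires after adjusting constants; drop the rearrangement step rather than trying to force the constant to be $1$.
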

\begin{proof}
	Let $v \in \mathbb{R}^N$ and $m \in [N]$ such that $v_m = \dnorm{\infty}{v}$. Denoting $\bar{v} := \delta \sum_{\kappa\in [N]} v_\kappa$ and choosing $j \in [N]$ such that $|v_j - v_m| \geq |\bar{v} - v_m|$ we infer
	\begin{align*}
	\dnorm{\infty}{v} \leq |\bar{v}| + |v_m - \bar{v}| \leq |\bar{v}| + |v_m - v_j| %
    \leq \delta \sum\nolimits_\kappa |v_\kappa|+ \delta \sum\nolimits_\kappa |\partial_+^\delta v| \leq \dnorm{H^1_N}{v} . 
	\end{align*}
	With this we deduce
	\begin{align*}
		\dnorm{L^p_N}{v} &= \bra[\bigg]{\delta\sum\nolimits_\kappa \abs v_\kappa^{2} \abs v_\kappa^{p-2}}^{\!1/p} \leq \dnorm{L^2_N}{v}^{\frac 2p} \dnorm{L^\infty_N}{v}^{\frac{p-2}{p}} \leq \dnorm{L^2_N}{v}^{\frac 2p} \dnorm{H^1_N}{v}^{\frac{p-2}{p}} . \qedhere
	\end{align*}
\end{proof}

\begin{proposition}[Poincaré and logarithmic Sobolev inequality on the discrete torus]\label{prop:PI-LSI}
      For any $N\geq 2$ ($\delta=N^{-1}$) the discrete \emph{Poincaré inequality} holds
      \begin{equation}\label{eq:PI:N}
            \delta\sum\nolimits_\kappa f_\kappa^2 \leq C_{\textup{PI}} \delta \sum\nolimits_\kappa \abs*{ \partial^\delta_+ f}^2  \quad\text{with}\qquad C_{\textup{PI}} =  \frac{\delta^2}{2(1-\cos(2\pi \delta))} = \frac{1}{4\pi^2} + O(\delta^2) \leq \frac{1}{16},
      \end{equation}
      for any $f:[N]\to \R$ with $\delta \sum\nolimits_\kappa f_\kappa=0$.

      Likewise, the discrete \emph{logarithmic Sobolev inequality} holds 
      \begin{equation}\label{eq:LSI:N}
          \delta\sum\nolimits_\kappa f_\kappa^2 \log f_\kappa^2 \leq \frac{25}{16\pi^2} \delta \sum\nolimits_\kappa \abs*{ \partial^\delta_+ f}^2 ,
      \end{equation}
      for any $f:[N]\to \R$ with $\delta \sum\nolimits_\kappa f_\kappa^2 = 1$, which equally stated in terms of the discrete entropy $\dent$ from~\eqref{eqdef:dentropy} and discrete Fisher information $\dfish$ from~\eqref{eq:def:dfish} becomes
      \begin{equation}\label{eq:LSI:d}
            \dent(\rho^\delta) \leq C_{\textup{LSI}} \dfish(\rho^\delta) \qquad\text{with}\qquad C_{\textup{LSI}}=\frac{25}{8\pi^2}
      \end{equation}
      for any $\rho^\delta\in \dprb$.
\end{proposition}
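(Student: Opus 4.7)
\textbf{Proof plan for Proposition~\ref{prop:PI-LSI}.}

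The plan is to prove the Poincaré inequality by direct spectral analysis of the forward-difference operator on the cyclic group $[N]$, and then to derive the logarithmic Sobolev inequality via a Rothaus-type reduction to its mean-zero version, together with the Poincaré bound just established (or alternatively by interpolation with the sharp continuous LSI on $\crc$).

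For the Poincaré inequality~\eqref{eq:PI:N}, I would diagonalise $\partial^\delta_+$ in the discrete Fourier basis $\{e_k\}_{k=0}^{N-1}$ with $(e_k)_\kappa = \exp(2\pi i k \kappa \delta)$. A direct computation gives $\partial^\delta_+ e_k = \delta^{-1}(e^{2\pi i k \delta} - 1)\,e_k$, hence $|(\partial^\delta_+ e_k)_\kappa|^2 = 2\delta^{-2}(1-\cos(2\pi k \delta))$ pointwise. Expanding $f = \sum_k \hat f_k\, e_k$ and applying Parseval,
\begin{equation*}
    \delta \sum\nolimits_\kappa f_\kappa^2 = \sum_{k=0}^{N-1} |\hat f_k|^2 , \qquad \delta \sum\nolimits_\kappa |\partial^\delta_+ f|^2 = \sum_{k=0}^{N-1} \tfrac{2(1-\cos(2\pi k \delta))}{\delta^2}\, |\hat f_k|^2 .
\end{equation*}
The mean-zero hypothesis forces $\hat f_0 = 0$, so the smallest nonzero value of the symbol---attained at $k=1$ (and symmetrically at $k=N-1$)---bounds the ratio from below and produces exactly $C_{\textup{PI}} = \delta^2/[2(1-\cos(2\pi\delta))]$. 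Expanding the cosine yields the asymptotic $1/(4\pi^2) + O(\delta^2)$, while a monotonicity check of $\delta \mapsto \delta^2/[2(1-\cos(2\pi\delta))]$ on $(0,1/2]$ identifies the maximum $1/16$ at $\delta = 1/2$.

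For the logarithmic Sobolev inequality~\eqref{eq:LSI:N}, I would proceed via a Rothaus decomposition: write $f = a + g$ with $a = \delta\sum f_\kappa$ the mean and $g$ mean-zero, so $a^2 + \|g\|_{L^2_N}^2 = 1$. Rothaus's lemma bounds $\delta\sum f_\kappa^2 \log f_\kappa^2$ by the entropy of $g^2$ relative to its mean, plus $2\|g\|_{L^2_N}^2$. The first piece is controlled by a Gross-type LSI for mean-zero functions on $[N]$, which can be proven either by combining the Poincaré inequality just established with a spectral-gap-to-LSI argument on the cyclic group (the classical Diaconis--Saloff-Coste setting), or by comparing $f$ to a piecewise-linear reconstruction on $\crc$ and invoking the continuum LSI $\int_\crc f^2 \log f^2 \le (2\pi^2)^{-1} \int_\crc |f'|^2$ with a Poincaré-based control of the reconstruction error. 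The second piece is absorbed by a $C_{\textup{PI}}$-multiple of the Dirichlet form. Tracking all constants, using $C_{\textup{PI}} \le 1/(4\pi^2) + O(\delta^2)$, delivers the stated factor $25/(16\pi^2)$. The reformulation~\eqref{eq:LSI:d} in terms of $\dent$ and $\dfish$ is then obtained by substituting $f = \sqrt{\rho^\delta}$, using $\delta \sum \rho_\kappa^\delta = 1$ to identify $\delta\sum f_\kappa^2 \log f_\kappa^2 = \dent(\rho^\delta)$ (since $\dent(\rho^\delta) = \delta\sum\rho^\delta\log\rho^\delta$ under the unit-mass constraint), and invoking the defining identity $\dfish(\rho^\delta) = 2\,\delta\sum|\partial^\delta_+\sqrt{\rho^\delta}|^2$.

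The principal obstacle is producing the explicit LSI constant rather than merely some $O(1)$ multiple of $C_{\textup{PI}}$. A naive Rothaus reduction incurs a suboptimal multiplicative factor, so one either needs the sharp random-walk LSI on the cycle (which provides the exact spectral-order constant $1/\pi^2$ up to a discretisation correction) or a careful interpolation between the discrete and continuous settings. Pinning down the precise rational prefactor $25/16$ requires an explicit optimisation of parameters in whichever method is chosen---either the splitting in the Rothaus step, or the trade-off between reconstruction error and Dirichlet term in the interpolation argument.
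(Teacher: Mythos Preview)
Your Poincar\'e argument is correct and is exactly the spectral-gap computation the paper relies on; the paper phrases it in Markov-chain language (variance and Dirichlet form of the symmetric random walk on $[N]$) and quotes the eigenvalue $\lambda=1-\cos(2\pi/N)$ from \cite{Diaconis--Saloff-Coste1996}, while you diagonalise $\partial^\delta_+$ explicitly in the discrete Fourier basis. These are the same proof.

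For the logarithmic Sobolev inequality the paper does \emph{not} carry out any Rothaus decomposition or interpolation with the continuous LSI. It simply identifies $\delta\sum f_\kappa^2\log f_\kappa^2$ and $\delta\sum|\partial^\delta_+ f|^2$ with the relative entropy and Dirichlet form of the simple random walk on the $N$-cycle, and then quotes the bound $\alpha\ge 8\pi^2/(25N^2)$ from \cite[\S4.2]{Diaconis--Saloff-Coste1996}; the factor $\delta^2/2$ in the normalisation of the Dirichlet form accounts for the passage from $\alpha$ to the constant $25/(16\pi^2)$ in \eqref{eq:LSI:N}. In particular, the rational prefactor $25/16$ is not the output of an optimisation performed in the paper---it is inherited verbatim from the cited reference. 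Your Rothaus route is a legitimate alternative, but as you yourself flag, it will not land on this specific constant without importing essentially the same Diaconis--Saloff-Coste estimate you are trying to avoid; the cleanest fix is to cite that result directly, as the paper does.
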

\begin{proof}
  We refer to~\cite[§4.2]{Diaconis--Saloff-Coste1996} using the connection to Markov chains, which we briefly explain. 
  \change{This is only necessary for the logarithmic Sobolev inequality, since the optimal Poincaré constant in our case follows from its connection to eigenvalues of circulant matrices, which are explicit~\cite{Gray2006}.}
  
  The left-hand side in~\eqref{eq:PI:N} is by the mean value assumption on $f$ equal to the variance with respect to the uniform measure $\pi_\kappa = \delta$ for $\kappa\in [N]$ defined for any $g:[N]\to \R$ by
  \begin{equation*}
      \operatorname{Var}(g) = \frac{1}{2} \sum\nolimits_{\kappa,\lambda} \bra*{g_\kappa- g_\lambda}^2 \pi_\kappa \pi_\lambda = \delta \sum\nolimits_\kappa \pra[\Big]{ g_\kappa - \delta\sum\nolimits_{\lambda} g_\lambda}^2 =\delta \sum\nolimits_\kappa f_\kappa^2 ,
  \end{equation*}
  where $f_\kappa = g_\kappa -\delta\sum_\lambda g_\lambda$ has average zero.

  Similarly, the right-hand side of~\eqref{eq:PI:N} and~\eqref{eq:LSI:N} are related to the Dirichlet form of the symmetric random walk on $[N]$ with jump kernel $K(\kappa,\kappa\pm 1)=\frac{1}{2}$ and zero else. Indeed, we find
  \begin{equation*}
      \mathcal{D}(f,f) = \frac{1}{2} \sum\nolimits_{\kappa,\lambda} \bra*{f_\kappa-f_\lambda}^2 K(\kappa,\lambda)\pi_\kappa =  \frac{1}{2} \delta\sum\nolimits_\kappa \abs*{ f_\kappa - f_{\kappa+1}}^2 = \frac{\delta^2}{2} \delta \sum\nolimits_\kappa \abs*{ \partial^{\delta}_+ f}^2 
  \end{equation*}
  introducing a factor of $\frac{\delta^2}{2}$ in comparison to the right-hand sides of~\eqref{eq:PI:N} and~\eqref{eq:LSI:N}. 

  The spectral gap $\lambda$ defined by $\lambda=\min\set*{ \mathcal{D}(f,f)/\operatorname{Var}(f): \operatorname{Var}(f)\neq 0}$ is given explicitly by $\lambda = 1-\cos\frac{2\pi}{N} \sim \frac{2 \pi^2}{N^2} + O(N^{-4})$ \change{(see e.g.~\cite[(3.7)]{Gray2006}).}
  Hence, by the scaling of the Dirichlet form noted above, we get the estimate~\eqref{eq:PI:N}. 

  Likewise, the left-hand side of~\eqref{eq:LSI:N} is the relative entropy of the measure $f^2 \pi$ with respect the uniform measure on $[N]$ defined by
  \[
    \operatorname{Ent}_\pi(f^2) = \delta \sum\nolimits_\kappa f_\kappa^2 \log \frac{f_\kappa^2}{\delta\sum_\lambda f_\lambda^2}  = \delta \sum\nolimits_\kappa f_\kappa^2 \log f_\kappa^2 ,
  \]
  provided $f^2$ is normalized such that $\delta\sum_\lambda f_\lambda^2=1$.
  Then, the logarithmic Sobolev constant is defined by $\alpha=\min\set*{ \mathcal{D}(f,f)/\operatorname{Ent}(f^2): \operatorname{Ent}(f^2)\neq 0}$. The result in~\cite[§4.2]{Diaconis--Saloff-Coste1996} implies that $\alpha \geq 8\pi^2/(25 N^2)$, which by the scaling of the Dirichlet form translates to the claimed result~\eqref{eq:LSI:N}. 
\end{proof}

\bibliographystyle{abbrv}
\bibliography{main}	

\end{document}